\documentclass[10pt]{article}
\usepackage[utf8]{inputenc}
\usepackage{amsmath,amsthm,amssymb, mathrsfs,amscd,amsthm,amscd,amsfonts,calligra,mathrsfs,adjustbox,lipsum}
\usepackage[textwidth=360pt,textheight=615pt]{geometry}
\usepackage{indentfirst,url}
\usepackage[all]{xy}
\usepackage{tikz}
\usepackage{paracol}
\usepackage{caption}
\usepackage{float}

\newtheorem{Theorem}{Theorem}
\newtheorem{Proposition}{Proposition}
\newtheorem{Example}{Example}
\newtheorem{Corollary}{Corollary}
\newtheorem{Conjecture}{Conjecture}
\newtheorem{Definition}{Definition}
\newtheorem{Lemma}{Lemma}
\newtheorem{Notation}[Proposition]{Notation}

\DeclareMathOperator{\glct}{glct}
\DeclareMathOperator{\lct}{lct}
\DeclareMathOperator{\mult}{mult}
\DeclareMathOperator{\support}{supp}

\DeclareMathOperator{\Picard}{Pic}

\newtheorem*{Acknowledgments}{ACKNOWLEDGMENTS}

\usepackage{CJKutf8}

\usepackage{amsmath,amssymb,amscd,amsfonts}

\title{Global log canonical thresholds for Burniat surfaces and some infinite families of surfaces of general type as $\mathbb{Z}_2^n$-covers
}
\author{Nguyen Bin, Jheng-Jie Chen and YongJoo Shin}
\date{\today}

\newcommand{\BinAddresses}{{
		\bigskip
		\footnotesize
        \text{Nguyen Bin,}\par\nopagebreak	
		\text{Department of Mathematics and Statistics,}\par\nopagebreak	
		\text{Quy Nhon University,}\par\nopagebreak	
            \text{170 An Duong Vuong Street, }\par\nopagebreak
		\text{Quy Nhon Nam Ward, Gia Lai, Vietnam.}\par\nopagebreak		
		\textit{E-mail address}: \texttt{nguyenbin@qnu.edu.vn}
		
}}

\newcommand{\JhengJieAddresses}{{
		\bigskip
		\footnotesize
        \text{Jheng-Jie Chen,}\par\nopagebreak	
		\text{Department of Mathematics,}\par\nopagebreak	
		\text{National Central University,}\par\nopagebreak	
            \text{No. 300, Zhongda Rd., Zhongli District, }\par\nopagebreak
		\text{Taoyuan City, Taiwan.}\par\nopagebreak		
		\textit{E-mail address}: \texttt{jhengjie@math.ncu.edu.tw}
		
}}

\newcommand{\YongJooAddresses}{{
		\bigskip
		\footnotesize
        \text{YongJoo Shin,}\par\nopagebreak	
		\text{Department of Mathematics,}\par\nopagebreak	
		\text{Chungnam National University,}\par\nopagebreak	
            \text{99, Daehak-ro, Yuseong-gu, }\par\nopagebreak
		\text{Daejeon, Republic of Korea.}\par\nopagebreak		
		\textit{E-mail address}: 
        \texttt{haushin@cnu.ac.kr}
		
}}

\begin{document}

\maketitle
\begin{abstract}
     In this paper, we complete the computation of the global log canonical thresholds of Burniat surfaces. Additionally, we determine the global log canonical thresholds for certain infinite families of surfaces of general type, constructed as $ \mathbb{Z}_2^n $-covers of $\mathbb{P}^2$ or $\mathbb{P}^1 \times \mathbb{P}^1$.

\end{abstract}

\section{Introduction}
    This work aims to calculate the global log canonical thresholds of complex projective algebraic surfaces of general type. While a significant progress has been made in studying the global log canonical thresholds of del Pezzo surfaces and higher-dimensional Fano varieties, a research on surfaces of general type remains relatively sparse. Note that the global log canonical threshold of surfaces of general type with respect to their canonical divisors, and the data of non-klt centers play an important role in the study of threefolds of general type (see \cite{MR4105534, MR2981839,MR3732684} ). Kim and the third author \cite{MR4216579} determined the global log canonical thresholds of Burniat surfaces with $K^2 = 6$. Recently, the authors \cite{2024arXiv240104326B} extended this work to Burniat surfaces with $K^2 = 5$. To the best of our knowledge, these are the only surfaces of general type for which global log canonical thresholds have been computed so far. Inspired by the works of Bauer and Catanese on the theory of Burniat surfaces \cite{MR2609250, zbMATH05885266, zbMATH06220365}, in this paper we complete the computation of the global log canonical thresholds of Burniat surfaces, as summarized below.

    \begin{Theorem}\label{GLCT_Burniatsurfaces_K^2=4}
		Let $ X $ be a Burniat surface. Then 
    \begin{align*}\glct\left( X,K_{X}\right) =\left\{ \begin{array}{ll} 
    \vspace{0.1cm}
    \frac{1}{2}&\textup{if $K_{X}^2 = 6$,}\\ \vspace{0.1cm}
    \frac{1}{2}&\textup{if $K_{X}^2 = 5$,}\\ \vspace{0.1cm}
    \frac{2}{3}&\textup{if $K_{X}^2 = 4$ and $X$ is of non-nodal type,} \\  \vspace{0.1cm} \frac{1}{2}&\textup{if $K_{X}^2 = 4$ and $X$ is of nodal type,} \\ \vspace{0.1cm}  \frac{2}{3}&\textup{if $K_{X}^2 = 3$,}\\ \vspace{0.1cm}  1&\textup{if $K_{X}^2 = 2$.}
    \end{array} \right.\end{align*} 
    \end{Theorem}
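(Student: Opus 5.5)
The cases $K_X^2=6$ and $K_X^2=5$ are already settled in \cite{MR4216579} and \cite{2024arXiv240104326B}, so only $K_X^2=4,3,2$ remain. I would use the standard Bauer--Catanese description. Let $Y$ be the blow-up of $\mathbb{P}^2$ at the three vertices of the triangle, a del Pezzo surface of degree $6$, further blown up at the $9-K_X^2-3$ extra triple points of the Burniat configuration, which gives a weak del Pezzo surface $Y'$. Then $X\to Y'$ (or its contraction in the nodal case) is a $\mathbb{Z}_2^2$-cover branched along $D=D_1+D_2+D_3$, each $D_i$ a sum of strict transforms of lines. Write $\pi\colon X\to Y'$. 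The key relation is $2K_X=\pi^*(2K_{Y'}+D)$, so $K_X=\pi^*L$ with $L=-K_{Y'}$ up to $\mathbb{Q}$-linear equivalence. For a finite cover, $\pi^*(K_{Y'}+\tfrac12 D)=K_X$, and pairs behave as in the ramification formula. Hence $\glct(X,K_X)$ equals the supremum of $\lambda$ such that $(Y',\tfrac12 D+\lambda\Delta)$ is lc for every effective $\Delta\equiv -K_{Y'}$, with $\Delta$ the descent of the $\mathbb{Z}_2^2$-orbit average of divisors in $|nK_X|$. Taking orbit averages is harmless because lc is preserved under averaging of conjugates of convex combinations. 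The problem then becomes a log canonical threshold computation on the boundary-twisted log del Pezzo pair $(Y',\tfrac12 D)$.

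For the upper bounds I would exhibit explicit divisors.
\begin{itemize}
\item For $K_X^2=4$ nodal type, $Y'$ contains a $(-2)$-curve $E$ disjoint from $D$, which pulls back to $(-2)$-curves on $X$. A member of $|{-K_{Y'}}|$ such as $2E+\cdots$, or the pullback of a line configuration through the node, has multiplicity $2$ along the preimage of a curve not in the branch locus, giving $\lambda\le\tfrac12$.
\item For $K_X^2=4$ non-nodal and for $K_X^2=3$, I expect an anticanonical member containing a line through two of the extra points. Its preimage is a curve with multiplicity making $\lambda\le\tfrac23$, probably from a point where three branch components plus $\Delta$ meet, the threshold coming from the $\tfrac12$ coefficients.
\item For $K_X^2=2$, I would only need $\lambda\le 1$, which is trivial by taking a singular member whose lift is nodal in a suitable way, or simply any curve in $|K_X|$ with a singular point.
\end{itemize}

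For the lower bounds I would use the standard local argument. Suppose $(Y',\tfrac12 D+\lambda\Delta)$ is not lc at a point $P$. By convexity, replace $\Delta$ so that it does not contain some chosen irreducible curve $C$ through $P$, usually a component of $D$ or a $(-1)$-curve. Then intersect with $C$ using inversion of adjunction: $\operatorname{mult}_P\Delta\le \Delta\cdot C=-K_{Y'}\cdot C$. This is small, $1$ for $(-1)$-curves and lines, which forces $\lambda\operatorname{mult}_P\Delta>1-\tfrac12(\text{multiplicity of }D)$-type inequalities to fail. I would split into cases according to whether $P$ lies on $0$, $1$, $2$ or $3$ components of $D$, and whether $P$ lies on an exceptional curve.

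I expect the main obstacle to be the triple points of $D$ and the case where $\Delta$ contains the components of $D$ through $P$. There I would use the local inequality for $\tfrac12$-boundaries, namely Cheltsov's multiplicity $>4(1-\cdot)$ type lemma, together with blowing up $P$ and inducting on the exceptional curve.
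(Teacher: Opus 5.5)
Your reduction is off by a factor of $2$, and as a result the upper-bound witnesses you describe do not give the stated values. Since $D\sim -3K_{Y'}$, you have $2K_X\sim\pi^*(2K_{Y'}+D)\sim\pi^*(-K_{Y'})$, so $K_X\sim_{\mathbb{Q}}\frac12\pi^*(-K_{Y'})$, not $\pi^*(-K_{Y'})$. With $\Delta\sim_{\mathbb{Q}}-K_{Y'}$, your supremum is therefore $\glct(X,2K_X)=\frac12\glct(X,K_X)$. The thresholds you must reach on $Y'$ are $\frac14$ (nodal), $\frac13$ (non-nodal and $K^2=3$) and $\frac12$ ($K^2=2$).

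\textbf{Nodal case.} The only $(-2)$-curve of $Y_5'$ is $h_{145}$, and it is a component of $B_3$, not disjoint from the branch locus. A coefficient $2$ on a non-branch curve in an anticanonical divisor only yields $\glct(X,K_X)\le 1$. What works is a branch curve with coefficient $2$. The paper uses $2h_{23}+e_2+e_3+h_{145}\sim -K_{Y_5'}$, whose pullback $4H_{23}+2E_2+2E_3+2H_{145}\sim 2K_X$ has lct $\frac14$ at a point of $E_3\cap H_{23}$.

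\textbf{Non-nodal case and $K^2=3$.} Your description does not determine a witness. Three branch components never meet on $Y'$, since the branch divisor is snc there. In the non-nodal case the line $h_{45}$ through the two extra points is not even a branch curve. The paper takes a member of $|-2K_{Y'}|$ containing a branch line with multiplicity $3$: $3h_{35}+e_3+e_5+h_{12}+h_{14}+h_{24}$, resp.\ $3h_{145}+h_{246}+h_{356}+h_{23}+e_1+2e_4+2e_5$. Its pullback lies in $|4K_X|$ with coefficient $6$ on $H_{35}$ (resp.\ $H_{145}$), which gives $\glct(X,K_X)\le\frac23$.

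\textbf{$K^2=2$.} Here $p_g=0$, so $|K_X|$ is empty. Use instead the reduced preimage $\bar l_i$ of a branch line in the Campedelli model, which satisfies $2\bar l_i\sim 2K_X$.

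Your lower-bound strategy has the same architecture as the paper's: descend, split by the position of $\varphi(P)$ relative to $B$, use inversion of adjunction, and blow up at the hard points. As written, though, it is only a template, and two of its steps do not work as stated.

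First, your justification for passing to orbit averages is backwards. Convexity of the lc condition gives $\lct(X,\bar D)\ge\lct(X,D)$. So restricting to invariant divisors a priori computes a number $\ge\glct(X,K_X)$, which serves upper bounds, not lower ones. The paper likewise works with $d=\varphi(D)$ via Lemma~\ref{log_canonical_between_cover}, so you are not diverging from it here, but the reason you give does not justify the step.

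Second, the convexity lemma \cite[Lemma 2.2]{MR4216579} does not let you remove a chosen curve $C$ from $\support(\Delta)$. It only removes some component of a fixed divisor $T\sim_{\mathbb{Q}}D$ with $(X,\lambda T)$ lc at $P$, and you do not control which one. The paper takes $T=3H_{12}+E_1+E_2+H_{34}+H_{35}+\frac12\tilde{H}_{45}$, shows that $D$ must contain $E_1$ and $H_{12}$, and derives $a_{12}\le 1$ in each remaining subcase.

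You also leave out two ingredients the paper needs:
\begin{itemize}
\item At points off $B$, the global input is Cheltsov's values of $\glct(Y',-K_{Y'})$.
\item At intersections of two branch curves such as $e_1\cap h_{12}$ (non-nodal) and $e_1\cap h_{145}$ ($K^2=3$), inversion of adjunction alone does not conclude. There the paper first proves $a_{12}\le1$ (resp.\ $a_{145}\le1$), $a_1\le 1+a_{1k}$ and $a_1+m\le 2$ with $m=\mult_{\varphi(P)}(\omega)$. Only then does it blow up $\varphi(P)$ and rule out each of the three possible positions of the non-lc point on the exceptional curve.
\end{itemize}

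Finally, for $K^2=2$ the paper never works on the degree-$2$ weak del Pezzo. By Bauer--Catanese these surfaces are classical Campedelli surfaces, i.e.\ $\mathbb{Z}_2^3$-covers of $\mathbb{P}^2$ branched along $7$ general lines. Theorem~\ref{glct_Z_2^n_covering} with $k=7$ then reduces the lower bound to $\glct(\mathbb{P}^2,-K_{\mathbb{P}^2})=\frac13$ and inversion of adjunction along the lines.
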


    \noindent
    As explained above, to prove Theorem \ref{GLCT_Burniatsurfaces_K^2=4} we compute the global log canonical thresholds of Burniat surfaces with $K^2 = 4,3 \textup{ or } 2$. Our proof strategy follows the approach developed by Kim and the third author \cite{MR4216579, MR4515703}. The equalities stated in Theorem \ref{GLCT_Burniatsurfaces_K^2=4} are derived via a proof by contradiction.  It is noteworthy that the branch loci of Burniat surfaces with $ K^2_X = 3,4$ are more intricate than those with $ K^2_X = 5,6$. Consequently, a greater number of cases must be analyzed. In most instances, contradictions are obtained by applying the inversion of adjunction. However, there exist specific cases in which this method alone does not yield a contradiction directly. To address this difficulty, we employ the blowing-up technique introduced by Cheltsov \cite{MR2465686}. Together, these two approaches enable us to derive the desired contradictions effectively. It is well-known that the Burniat surfaces with $K^2 = 2$ are the classical Campedelli surfaces which are $ \mathbb{Z}^3_2 $-covers of $\mathbb{P}^2$ ramified in $7$ lines (cf. \cite[Section 5]{zbMATH05885266}). The global log canonical thresholds of these surface are computed in Theorem \ref{glct_Z_2^n_covering}.\\

    The theory of abelian covers was studied by Pardini in \cite{MR1103912}. Such coverings are frequently used to construct explicit examples of varieties with interesting properties. We observe that the approach introduced by Kim and the third author can be applied to a wide class of abelian covers. In this paper, we compute the global log canonical thresholds for certain infinite families of surfaces of general type, constructed as $ \mathbb{Z}_2^n $-covers.
    \begin{Theorem}\label{glct_Z_2^n_covering}
    Let $n,k$ be positive integers with $k \geq 7$, and $l_1, l_2, \ldots, l_k$ be $k$ lines in general position in $\mathbb{P}^2$. Let $ \xymatrix{\varphi: X \ar[r] & \mathbb{P}^2}  $ be a $ \mathbb{Z}^n_2 $-cover branched along the divisor $l_1 + l_2 + \cdots + l_k$ such that $X$ is a smooth irreducible surface. Then $$\glct{(X)} = \frac{1}{k-6}.$$
    In particular, $$\glct(X,2K_X)=\alpha_1(X,2K_X)$$ where $\alpha_m$ is $m$th $\alpha$-invariant (see Section \ref{somefacts}).
    \end{Theorem}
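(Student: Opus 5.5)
The plan is to reduce everything to the pair $(X,\lambda\varphi^*H)$, where $H$ is a line class on $\mathbb{P}^2$, and then to prove matching upper and lower bounds.

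\textbf{Reduction.} Since $\varphi$ is a $\mathbb{Z}_2^n$-cover simply branched along $l_1+\cdots+l_k$, Hurwitz's formula gives
$$K_X\sim_{\mathbb{Q}}\varphi^*\Bigl(K_{\mathbb{P}^2}+\tfrac12\textstyle\sum_i l_i\Bigr)\sim_{\mathbb{Q}}\tfrac{k-6}{2}\,\varphi^*H .$$
Hence $\glct(X)=\glct(X,K_X)=\frac{2}{k-6}\,\glct(X,\varphi^*H)$. It therefore suffices to show $\glct(X,\varphi^*H)=\tfrac12$. For each $i$ write $\varphi^*l_i=2R_i$, where $R_i$ is the reduced ramification curve. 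Because $X$ is smooth and the lines are in general position, $R_i$ is smooth and $R_i^2=2^{n-2}$.

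\textbf{Upper bound.} The divisor $2R_i$ is an effective member of $|\varphi^*H|_{\mathbb{Q}}$, and $\lct(X,2R_i)=\tfrac12$. So $\glct(X,\varphi^*H)\le\tfrac12$ and $\glct(X)\le\frac1{k-6}$. The same divisor, being integral and in $|2K_X|$ up to scaling (when it is), should also give the $\alpha_1$ statement, once one checks that $2K_X$ is a multiple of $\varphi^*H$ realized by sums of the $R_i$.

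\textbf{Lower bound.} Argue by contradiction, following the method of Kim and the third author. Suppose $D\equiv\varphi^*H$ is effective and $(X,\tfrac12 D)$ is not lc at a point $x$. Then $\mult_x D>2$. Distinguish three cases according to the type of $\varphi(x)$: a point off the branch lines, a point on exactly one line $l_i$, or an intersection point $p_{ij}=l_i\cap l_j$.

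In the second and third cases, write $D=aR_i+\Omega$ with $R_i\not\subset\operatorname{Supp}\Omega$. Comparing with $2R_i\equiv D$ and intersecting with a suitable curve gives $a\le 2$, with equality only when $D=2R_i$. The pair $(X,\tfrac a2R_i+\tfrac12\Omega)$ is not lc at $x$, so the pair $(X,R_i+\tfrac12\Omega)$ is not lc at $x$ either. Inversion of adjunction then forces $\tfrac12(\Omega\cdot R_i)_x>1$.

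To make this local intersection small, do not use all of $R_i$. Instead use the components of $\varphi^*L$, where $L$ runs through the lines of the pencil through $p_{ij}$ (or through $\varphi(x)$ and a suitable $p_{ij}$). Over such lines $\varphi^*L$ splits into several components, each a cover of $\mathbb{P}^1$ branched at the other $k-2$ points. Choose one component $C$ through $x$ that is not contained in $\operatorname{Supp}D$. Then $\mult_xD\cdot\mult_xC\le D\cdot C$, and $D\cdot C$ is a small fraction of $2^n$ because $G$ permutes the components transitively. In the first case (general $x$), pick such a line through $\varphi(x)$ and some $p_{ij}$, and apply the same multiplicity bound.

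\textbf{Main obstacle.} The hardest step is obtaining a bound $D\cdot C\le 2\,\mult_x C$, so that $\mult_xD\le 2$ follows. The danger is that the components of $\varphi^*L$ may not be few enough, or may be singular at $x$. Where the crude multiplicity bound is not strong enough, the first thing to try is Cheltsov's blow-up technique: blow up $x$ and run inversion of adjunction on the exceptional curve and on the strict transforms of $R_i$, $R_j$ and $C$.
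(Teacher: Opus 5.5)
\textbf{There is a genuine gap: the lower bound, exactly at the step you flag as the main obstacle, and the remedy you propose cannot work.} The rest of your outline is fine: the reduction to $\glct(X,\varphi^*H)=\frac12$, the upper bound from $2R_i=\varphi^*l_i$, the bound $a\le 2$ (intersect with the component of $R_i$ through $x$, which has positive self-intersection), and the inversion-of-adjunction step. The upper bound also settles the $\alpha_1$ claim, since $2(k-6)R_i=\varphi^*((k-6)l_i)\in|2K_X|$ and $\glct\le\alpha_1$.

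\textbf{Why the pencil argument fails.} Write $\sigma_m\in\mathbb{Z}_2^n$ for the label of $l_m$. For a general line $L$ through $p_{ij}$, the normalization of $\varphi^{-1}(L)$ is a $\mathbb{Z}_2^n$-cover of $L\cong\mathbb{P}^1$. Its inertia is $\langle\sigma_i\sigma_j\rangle$ over $p_{ij}$ and $\langle\sigma_m\rangle$ over $L\cap l_m$ for $m\ne i,j$. Together with $\sigma_i$ these elements generate $\mathbb{Z}_2^n$, so $\varphi^{-1}(L)$ has at most two components. Each has degree at least $2^{n-1}$ over $L$ and multiplicity at most $2$ at $x$. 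Hence $\mult_xC\cdot\mult_xD\le D\cdot C$ only gives $\mult_xD\le 2^{n-1}$, nowhere near $2$. Note that $n\ge 3$ is forced: lines with equal labels would meet in an $A_1$ point, so $k\le 2^n-1$. The same defect already appears in your adjunction step. If $R_i$ is irreducible, then $\Omega\cdot R_i=(2-a)2^{n-2}$, and nothing prevents all of it from sitting at $x$, because $D$ is not $G$-invariant.

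\textbf{Why no local refinement can close it.} Cheltsov's blow-up trick will not help, because for arbitrary $D$ the inequality you want is false once $n\ge5$. Since $(\varphi^*H)^2=2^n$, a Riemann--Roch count in $|m\varphi^*H|$ gives, for any $x$ and any rational $\mu<2^{n/2}$, an effective $D\sim_{\mathbb{Q}}\varphi^*H$ with $\mult_xD\ge\mu$. The pair $(X,\frac12 D)$ is then not lc as soon as $\mu>4$. Hence
$$\glct(X)\le 2/\sqrt{K_X^2}=2^{2-n/2}/(k-6)<1/(k-6)\quad\text{for } n\ge5.$$
A concrete case: $n=5$, $k=7$, with labels $e_1,\dots,e_5,e_1+e_2,e_3+e_4+e_5$. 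These are distinct, span $\mathbb{Z}_2^5$ and sum to $0$, so $X$ exists and is smooth with $K_X^2=8$, giving $\glct(X)\le 1/\sqrt2<1$.

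\textbf{What the paper does instead, and its limitation.} The paper never does the local analysis on $X$. Via Lemma~\ref{log_canonical_between_cover} it passes to $d=\varphi(D)\sim_{\mathbb{Q}}(k-6)H$ and applies inversion of adjunction to $(\mathbb{P}^2,\frac{1}{2(k-6)}d+\frac12\sum l_i)$ along $l_i$. There $\omega\cdot l_i=(k-6)-a_i$ would have to exceed $2(k-6)$, which is impossible. Off the lines it uses $\glct(\mathbb{P}^2,\mathcal{O}(1))=1$. In your upstairs language, this is the observation that for $G$-invariant $D$ and $x$ off the nodes, the intersection is shared equally by the $2^{n-1}$ points of the orbit of $x$ on $R_i$. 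That gives $(\Omega\cdot R_i)_x\le 1-\frac a2<2$, an immediate contradiction.

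However, $\varphi(D)$ is only defined when $D=\varphi^*d$. For general $D$ one only has $\varphi^*\varphi_*D\ge D$, with $\varphi_*D$ in $2^n$ times the required class. So the paper's argument, and any repair of yours built on this idea, proves the bound only for $G$-invariant divisors. Working with arbitrary $D$, as the definition of $\glct$ requires, is exactly what makes your route stall, and for $n\ge5$ it must.
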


    \noindent
    Many known surfaces of general type were constructed as Theorem \ref{glct_Z_2^n_covering} (see Section \ref{application_glct_Z_2^n}). As a consequence of Theorem \ref{glct_Z_2^n_covering}, the Campedelli surfaces $X$ have $\glct(X) = 1$ (see Corollary \ref{glct_Campedelli_surfaces}) and the Persson surfaces $X$ have $\glct(X) = \frac{1}{2}$ (see Corrollary \ref{Persson's generation}). 

    \begin{Theorem}\label{glct_Z_2^n_covering_P^1xP^1}
    Let $n,k,k'$ be positive integers with $k'\geq k \geq 5$, and $F_1, F_2, \ldots, F_k \in | F |$, $G_1, G_2, \ldots, G_{k'} \in | G |$ be distinct $k+k'$ divisors in $\mathbb{P}^1 \times \mathbb{P}^1$ where $F = \{0\}\times \mathbb{P}^1$ and $G = \mathbb{P}^1 \times \{0\}$. Let $ \xymatrix{\varphi: X \ar[r] & \mathbb{P}^1 \times \mathbb{P}^1}  $ be a $ \mathbb{Z}^n_2 $-cover branched along the divisor $\sum \limits_{i=1}^{k}{F_i} + \sum \limits_{j=1}^{k'}{G_j}$  such that $X$ is a smooth irreducible surface. Then $$\glct{(X)} = \frac{1}{k'-4}.$$
    In particular, $$\glct(X,2K_X)=\alpha_1(X,2K_X).$$
    \end{Theorem}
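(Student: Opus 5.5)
The equality will follow from an upper bound given by one explicit divisor and a lower bound proved by contradiction, following the strategy of Kim and the third author. Write $Y=\mathbb{P}^1\times\mathbb{P}^1$, $d=\deg\varphi=2^n$ and $B=\sum F_i+\sum G_j\equiv kF+k'G$. Since $\varphi$ is a $\mathbb{Z}_2^n$-cover with reduced branch divisor, Pardini's formulas give
$$2K_X\sim\varphi^*(2K_Y+B)\sim\varphi^*\bigl((k-4)F+(k'-4)G\bigr),\qquad \varphi^*F_i=2R_{F_i},\quad \varphi^*G_j=2R_{G_j},$$
where $R_{F_i}$ and $R_{G_j}$ are the reduced ramification divisors. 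These divisors are smooth, because $X$ is smooth. From the formulas one reads off
$$R_{F_i}^2=R_{G_j}^2=0,\qquad R_{F_i}\cdot R_{G_j}=d/4,\qquad K_X\cdot R_{G_j}=(k-4)d/4,\qquad K_X\cdot R_{F_i}=(k'-4)d/4.$$

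\emph{Upper bound.} I take $D_0=(k-4)R_{F_1}+(k'-4)R_{G_1}\sim_{\mathbb{Q}}K_X$. The curves $R_{F_1}$ and $R_{G_1}$ are smooth and meet transversally, since locally $\varphi$ is $(x,y)\mapsto(x^2,y^2)$ there. Because $k'\ge k$, this gives $\lct(X,D_0)=1/(k'-4)$, hence $\glct(X)\le 1/(k'-4)$. Moreover $2D_0=\varphi^*((k-4)F_1+(k'-4)G_1)$ is an honest member of $|2K_X|$, so the same divisor computes $\alpha_1(X,2K_X)$. Consequently the ``in particular'' statement follows once the lower bound is proved.

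\emph{Lower bound.} Set $\lambda=1/(k'-4)$ and suppose there is an effective $\mathbb{Q}$-divisor $D\equiv K_X$ such that $(X,\lambda D)$ is not lc at some point $p$. Then $\mult_p D>k'-4$. By convexity of the lc condition I may assume that $D$ does not contain some chosen curve in its support: if $D\ge\mu C$, I replace $D$ by a suitable combination of $D$ and $D_0$ (or a $\mathbb{Z}_2^n$-translate of such a divisor), as in the earlier papers. I then split into cases according to $q=\varphi(p)$.

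In case (a), $q$ lies on no branch line. Let $C$ be the component through $p$ of $\varphi^*F_q$, where $F_q\in|F|$ passes through $q$. It is smooth at $p$, and $D\cdot C=(k'-4)|H|/2$ with $H$ the stabilizer of $C$. I compare this with the analogous curve from $|G|$ and with the ramification curves, and apply inversion of adjunction, $\lambda(D\cdot C)_p>1$, to reach a contradiction.

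In case (b), $q$ lies on exactly one line, say $G_j$ (the case of an $F_i$ is easier, since $k\le k'$). I use $C=R_{G_j}$. Writing $D=aR_{G_j}+\Omega$, inversion of adjunction gives $\lambda\,\Omega\cdot R_{G_j}>1$, while $\Omega\cdot R_{G_j}=D\cdot R_{G_j}=(k-4)d/4$ because $R_{G_j}^2=0$. I bound $a$ by intersecting $D$ with a general component of $\varphi^*F$, which yields $a\le$ a bound in terms of $(k-4)$, and I localize to the single component of $R_{G_j}$ through $p$ by dividing by the number of components.

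In case (c), $q=F_i\cap G_j$, and $p$ is a node-free transversal point of $R_{F_i}$ and $R_{G_j}$. This is the case I expect to be the main obstacle. Writing $D=aR_{F_i}+bR_{G_j}+\Omega$, inversion of adjunction on each curve gives only $\lambda(\Omega\cdot R)_p+\lambda(\text{other coefficient})>1$, and this may fail to contradict the global intersection bounds. Here I will use Cheltsov's blow-up technique: blow up $p$, write the log pullback with exceptional coefficient $\lambda(a+b+\mult_p\Omega)-1$, and locate the non-lc point on the exceptional curve $E$. I then apply inversion of adjunction to $E$ or to the strict transforms of $R_{F_i}$ and $R_{G_j}$, using $\mult_p\Omega\le\Omega\cdot R_{G_j}$ together with bounds on $a$ and $b$ from intersecting with general fibers. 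The case split on whether the point lies on a strict transform is where I expect the arithmetic with $k\le k'$ and $k\ge5$ to be needed.
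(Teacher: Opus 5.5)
\textbf{The lower bound has a gap that cannot be filled, because the statement is false in this generality.} Your upper bound is fine and agrees with the paper, and so is your derivation of the $\alpha_1$ statement from $2D_0\in|2K_X|$.

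\emph{Where your argument stalls.} In case (a), the only curves through $p$ you have are components $C$ of $\varphi^*F_q$ or $\varphi^*G_q$. They satisfy $C^2=0$ and $D\cdot C=(k'-4)|H|/2$ (resp.\ $(k-4)|H|/2$). So inversion of adjunction along $C$, or simply $\mult_pD\le D\cdot C$, contradicts $\mult_pD>k'-4$ only when the stabilizer satisfies $|H|\le 2$. Case (b) has the same problem: a component of $R_{G_j}$ has $D\cdot C=(k-4)|H_C|/4$. When stabilizers are large, comparing with other fibre or ramification curves cannot help. For instance, in Example \ref{Example 2} a general member of $|F|$ has connected preimage, with $|H|=16$ and $K_X\cdot C=16$.

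\emph{Why no argument can work.} Since $K_X$ is ample, $h^0(mK_X)=\chi(\mathcal{O}_X)+\binom{m}{2}K_X^2$. Multiplicity $\ge\mu$ at $p$ imposes at most $\binom{\mu+1}{2}$ conditions. Hence $\glct(X)\le 2/\sqrt{K_X^2}$, with $K_X^2=2^{n-1}(k-4)(k'-4)$. This is $<\frac{1}{k'-4}$ as soon as $2^{n-3}(k-4)>k'-4$. Concretely, for Example \ref{Example 2} ($K_X^2=32$):
\begin{itemize}
\item $h^0(3K_X)\ge 96>91=\binom{14}{2}$;
\item so some $D_3\in|3K_X|$ has $\mult_pD_3\ge 13$;
\item so $\lct(X,\tfrac13 D_3)\le\tfrac{6}{13}<\tfrac12$, contradicting the claimed value $\tfrac12$.
\end{itemize}

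\emph{Why the paper does not see this.} The paper never argues on $X$. Lemma \ref{log_canonical_between_cover} and the case analysis replace $D$ by $d=\varphi(D)$. By the paper's own Notation this presupposes $D=\varphi^*d$, i.e.\ that $D$ is $\mathbb{Z}_2^n$-invariant. For arbitrary $D$ one only has $\varphi^*(2^{-n}\varphi_*D)\ge 2^{-n}D$, which loses a factor $2^n$. So the argument on $\mathbb{P}^1\times\mathbb{P}^1$ actually computes the threshold over invariant divisors only.

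Working upstairs on $X$ is the right instinct. Your case (a) estimate does close when all stabilizers have order at most $2$, e.g.\ Example \ref{Example 1}, a product of two genus-$2$ curves. But the statement needs an extra hypothesis, at least $2^{n-3}(k-4)\le k'-4$, before any proof can succeed.

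A smaller point: the convexity trick only removes \emph{some} component of $D_0$ from the support, not a curve of your choosing.
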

    
    \noindent
    Many known irregular surfaces of general type were constructed as Theorem  \ref{glct_Z_2^n_covering_P^1xP^1}. See Examples \ref{Example 1} and \ref{Example 2} for explicit information.

    Throughout this paper, all surfaces are projective algebraic over a field of complex numbers. The linear equivalence of the divisors is indicated by $ \sim $ and the $\mathbb{Q}$-linear equivalence is indicated by $\sim_{\mathbb{Q}}$.  We denote by $F = \{0\}\times \mathbb{P}^1$ and $G = \mathbb{P}^1 \times \{0\}$ the generators of $\Picard(\mathbb{P}^1 \times \mathbb{P}^1)$. And $n$ is a positive integer.

    \section{Preliminaries}

        \subsection{$ \mathbb{Z}^n_2 $-covers}
The construction of abelian covers was studied by Pardini in \cite{MR1103912}. For details of the building data of abelian covers and their notation, we refer the reader to Sections 1 and 2 of Pardini's work \cite{MR1103912}. For a deeper discussion of $ \mathbb{Z}^n_2 $-covers, we refer the reader to \cite[Section 3]{MR1802792}. For the sake of completeness, we recall some facts on $ \mathbb{Z}_{2}^n $-covers, in a form that is convenient for our later proofs.
	       
	       We denote by  $ \chi_{j_1j_2\ldots j_n} $ the character of $ \mathbb{Z}_{2}^n $ defined by
	       \begin{align*}
	       	\chi_{j_1j_2\ldots j_n}\left( a_1,a_2,\ldots,a_n\right): =  e^{\left( \pi a_1j_1\right) \sqrt{-1}}e^{\left( \pi a_2j_2\right) \sqrt{-1}} \ldots e^{\left( \pi a_nj_n\right) \sqrt{-1}}
	       \end{align*}
	       for all $ j_1,j_2, \ldots, j_n,a_1,a_2,\ldots, a_n\in \mathbb{Z}_2 $. For a normal surface $X$ and a smooth complete surface $Y$, a $ \mathbb{Z}_{2}^n $-cover $ \xymatrix{X \ar[r] & Y} $ can be determined by a collection of non-trivial divisors $ L_{\chi} $ labelled by characters of $ \mathbb{Z}_{2}^n $ and effective divisors $ D_{\sigma} $ labelled by non-trivial elements of $ \mathbb{Z}_{2}^n $ of the surface $ Y $. More precisely, we have the result for $\mathbb{Z}_{2}^n$-covers as follows:
	       \begin{Proposition}[{cf. \cite[\rm Theorem 2.1]{MR1103912}}]\label{Construction of Z_2^n covers}
	       	Given a smooth projective surface $Y$, let $ L_{\chi} $ be divisors of $ Y $ such that $ L_{\chi} \not\sim \mathcal{O}_Y $ for all non-trivial characters $ \chi $ of $ \mathbb{Z}_{2}^n  $, and $ D_{\sigma} $ be effective divisors of  $ Y $ for all $ \sigma \in \mathbb{Z}_{2}^n \setminus \left\lbrace \left(0,0,\ldots, 0 \right)  \right\rbrace  $ such that the total branch divisor $ B:=\sum\limits_{\sigma \ne 0}{D_{\sigma}} $ is reduced. Then $ \left\lbrace L_{\chi}, D_{\sigma} \right\rbrace_{\chi,\sigma}$ is the building data of a $ \mathbb{Z}_{2}^n$-cover $ \xymatrix{f:X \ar[r]& Y} $ for a normal surface $X$ if and only if
	       	\begin{align}\label{The condition of Z_2^n covers} 
	       		L_{\chi}+L_{\chi'} \sim L_{\chi\chi'}+ \sum\limits_{\chi\left( \sigma\right)=\chi'\left( \sigma\right) = -1 }{D_{\sigma}}	
	       	\end{align}
	       	for all non-trivial characters $ \chi, \chi' $ of $ \mathbb{Z}_{2}^n  $.

	       \end{Proposition}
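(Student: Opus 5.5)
The plan is to prove both directions through the eigensheaf decomposition of $f_*\mathcal{O}_X$, following Pardini's argument in \cite{MR1103912} specialized to $G=\mathbb{Z}_2^n$.

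\textbf{Necessity.} Let $f\colon X\to Y$ be a $\mathbb{Z}_2^n$-cover with $X$ normal and $Y$ smooth.

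First, $f$ is flat. Indeed, $f$ is finite, $X$ is normal of dimension $2$ and hence Cohen--Macaulay, and $Y$ is regular, so miracle flatness applies.

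Second, the group acts on $f_*\mathcal{O}_X$, and this sheaf splits as $\bigoplus_{\chi}L_\chi^{-1}$. Here $L_\chi^{-1}$ is the $\chi$-eigensheaf, $L_1=\mathcal{O}_Y$, and each $L_\chi^{-1}$ is a line bundle, since it is locally free and has rank one at the generic point.

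Third, multiplication in $\mathcal{O}_X$ gives maps $m_{\chi,\chi'}\colon L_\chi^{-1}\otimes L_{\chi'}^{-1}\to L_{\chi\chi'}^{-1}$, that is, sections of $L_\chi+L_{\chi'}-L_{\chi\chi'}$. I will identify the zero divisor of $m_{\chi,\chi'}$ by a local computation at a general point of a component of the branch locus. The inertia group there is a subgroup of order $2$ generated by some $\sigma$, and this defines $D_\sigma$. Near such a point the cover is étale-locally $z^2=t$, where $t$ is a local equation of $D_\sigma$ and $\sigma$ acts by $z\mapsto -z$. A $\chi$-eigenfunction is $z$ times an invariant if $\chi(\sigma)=-1$, and is an invariant otherwise. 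Hence $m_{\chi,\chi'}$ vanishes to order exactly one along $D_\sigma$ precisely when $\chi(\sigma)=\chi'(\sigma)=-1$, which gives $z\cdot z=t$; in all other cases it is a unit. Away from $B$ the cover is étale, so every $m_{\chi,\chi'}$ is an isomorphism there. This proves \eqref{The condition of Z_2^n covers}. The condition $L_\chi\not\sim\mathcal{O}_Y$ for $\chi\ne 1$ follows from the irreducibility (connectedness) of $X$. If it failed, the cover would factor through a trivial double cover. Since $B$ is reduced, it is the union of these $D_\sigma$.

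\textbf{Sufficiency.} Given data satisfying \eqref{The condition of Z_2^n covers}, I will build an $\mathcal{O}_Y$-algebra $\mathcal{A}=\bigoplus_\chi L_\chi^{-1}$ and set $X=\operatorname{Spec}\mathcal{A}$.

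Fix sections $s_\sigma$ with $\operatorname{div}(s_\sigma)=D_\sigma$, and fix the isomorphisms $\psi_{\chi,\chi'}\colon L_\chi+L_{\chi'}\cong L_{\chi\chi'}+\sum_{\chi(\sigma)=\chi'(\sigma)=-1}D_\sigma$ provided by the hypothesis. Define the product on $\mathcal{A}$ by $\psi_{\chi,\chi'}^{-1}$ composed with multiplication by $\prod s_\sigma$.

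The main obstacle is choosing the $\psi_{\chi,\chi'}$ so that this product is commutative and associative. Each $\psi$ is determined only up to a nonzero constant, and $\operatorname{Pic}(Y)$ may contain $2$-torsion, so the $L_\chi$ are not determined by the $D_\sigma$. To handle this, I will choose a basis $\chi_1,\dots,\chi_n$ of the character group. I take $L_{\chi_i}$ as given and use the relation to identify $L_\chi$ for a general $\chi=\prod_{i\in I}\chi_i$ with $\sum_{i\in I}L_{\chi_i}$ minus an explicit combination of the $D_\sigma$. This works because the relations for all pairs are consistent: they are implied by the relations for pairs of basis elements and the $2L_{\chi_i}$ relations. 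With these identifications, $\mathcal{A}$ becomes a quotient of $\operatorname{Sym}\bigl(\bigoplus_i L_{\chi_i}^{-1}\bigr)$ by the relations $w_i^2=\prod_{\chi_i(\sigma)=-1}s_\sigma$. That description is visibly commutative and associative, so the associativity question is settled by working with this presentation.

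\textbf{Normality.} $X$ is flat over $Y$, hence Cohen--Macaulay, so it satisfies $S_2$. For $R_1$, it suffices to check codimension-one points. Over $Y\setminus B$ the cover is étale. Over a general point of a component of $D_\sigma$, which is reduced and meets no other $D_{\sigma'}$ there, the local model is $z^2=t$ times an étale cover, which is smooth. By Serre's criterion $X$ is normal, and the group action by characters recovers exactly the given building data.
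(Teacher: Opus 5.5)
The paper gives no proof of this statement; it only cites Pardini's Theorem~2.1. So your argument has to be judged on its own. Your necessity direction is fine and is essentially Pardini's. Flatness, the eigensheaf splitting, and the local model over a general point of $D_\sigma$ give exactly the divisor $\sum_{\chi(\sigma)=\chi'(\sigma)=-1}D_\sigma$ for $m_{\chi,\chi'}$. (Analytically locally the cover is $\mathbb{Z}_2^n\times_{\langle\sigma\rangle}\{z^2=t\}$ rather than one copy of $z^2=t$, but the computation is unchanged.)

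The gap is in sufficiency, at the claim that $\mathcal{A}$ ``becomes a quotient of $\operatorname{Sym}(\bigoplus_i L_{\chi_i}^{-1})$ by $w_i^2=\prod_{\chi_i(\sigma)=-1}s_\sigma$''. Call that quotient $\mathcal{A}'$. It is the fibre product of the $n$ double covers, and its $\chi_I$-eigensheaf is $\mathcal{O}_Y(-\sum_{i\in I}L_{\chi_i})$. The relations, however, force $L_{\chi_I}\sim\sum_{i\in I}L_{\chi_i}-\sum_\sigma\lfloor k_I(\sigma)/2\rfloor D_\sigma$, where $k_I(\sigma)=\#\{i\in I:\chi_i(\sigma)=-1\}$. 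This is the correction term you mention yourself. Hence $\mathcal{A}'\subsetneq\mathcal{A}$ as soon as some $D_\sigma\neq0$ has two basis characters equal to $-1$ on $\sigma$. In general no choice of basis avoids this, e.g.\ $n=2$ with $D_{10},D_{01},D_{11}$ all nonzero, exactly the Burniat situation. Concretely, with basis $\chi_{10},\chi_{01}$ and over a general point of $D_{11}$, $\mathcal{A}'$ is (up to units) $w_1^2=w_2^2=t$. That is two smooth sheets glued along a curve, and $w_1w_2/t$ is integral but not in $\mathcal{A}'$. So the algebra whose associativity you check is non-normal and has the wrong eigensheaves. Your normality check (``$z^2=t$ times étale'') holds only for $\mathcal{A}$. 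Associativity and commutativity of $\mathcal{A}$ itself are therefore never proved.

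The fix is Pardini's construction: take $X$ to be the normalization of the fibre product. Concretely, realize $\mathcal{A}$ inside $\mathcal{A}'\otimes_{\mathcal{O}_Y}K(Y)$, with $\chi_I$-piece generated by $w_I/\prod_\sigma s_\sigma^{\lfloor k_I(\sigma)/2\rfloor}$, where $w_I=\prod_{i\in I}w_i$. Closure under multiplication reduces, for each $\sigma$, to the identity $c-\lfloor\frac{a+c}{2}\rfloor-\lfloor\frac{b+c}{2}\rfloor+\lfloor\frac{a+b}{2}\rfloor=\epsilon$. Here $a=k_{I\setminus J}(\sigma)$, $b=k_{J\setminus I}(\sigma)$, $c=k_{I\cap J}(\sigma)$, and $\epsilon=1$ if $a+c$ and $b+c$ are both odd (i.e.\ $\chi_I(\sigma)=\chi_J(\sigma)=-1$), $\epsilon=0$ otherwise. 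This identity gives three things at once:
\begin{itemize}
\item the product lands in $\mathcal{A}$;
\item it equals $\prod_{\chi_I(\sigma)=\chi_J(\sigma)=-1}s_\sigma$ times the generator of the $\chi_I\chi_J$-piece, so the prescribed relations are realized;
\item it is commutative and associative, since these properties are inherited from $\mathcal{A}'\otimes K(Y)$.
\end{itemize}

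Finally, in this direction you never use the hypothesis $L_\chi\not\sim\mathcal{O}_Y$, and it is what makes $X$ a surface. Since $2L_\chi$ is effective, $H^0(Y,L_\chi^{-1})\neq0$ would force $L_\chi\sim0$. Hence $h^0(\mathcal{O}_X)=1$, so $X$ is connected, and being normal it is irreducible.
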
 

            \noindent
	    We notice that if $\Picard(Y)$ has no $2$-torsion, then the branch data $\{D_{\sigma} \}_{\sigma}$ suffices to determine the cover.  By \cite[\rm Proposition 3.1]{MR1103912} if each branch component $D_\sigma$ is smooth and the total branch locus $B $ is a simple normal crossings divisor, the surface $X$ is smooth. \\
	       
	       \noindent
	       Also from \cite[Remark 3.11]{MR1802792} and \cite[\rm Propositions 4.1 and 4.2]{MR1103912} we have:
	       \begin{Proposition}        \label{invariants of Z_2^n covers}
	       	If $ Y $ is a smooth surface and $ \xymatrix{f: X \ar[r]& Y} $ is a smooth $  \mathbb{Z}_{2}^n$-cover with the building data $ \left\lbrace L_{\chi}, D_{\sigma} \right\rbrace_{\chi,\sigma}$, the surface $ X $ satisfies the following:
	       	\begin{align*}
	       		2K_X & \sim f^*\left( 2K_Y + \sum\limits_{\sigma \ne 0} {D_{\sigma} } \right); \\
	       		f_{*}\mathcal{O}_X &= \mathcal{O}_Y \oplus \bigoplus\limits_{\chi \ne \chi_{00\ldots0}  }L_{\chi}^{-1}.
	       	\end{align*}	
	       	\noindent
	       	This implies that	
	       	\begin{align*}	       		
	       		K^2_X &= 2^{n-2}\left( 2K_Y + \sum\limits_{\sigma \ne 0} {D_{\sigma} } \right)^2; \\
	       		p_g\left( X \right) &=p_g\left( Y \right) +\sum\limits_{\chi \ne  \chi_{00\dots0}  }{h^0\left( Y, \mathcal{O}_Y (L_{\chi} + K_Y) \right)}; \\
	       		\chi\left( \mathcal{O}_X \right) &= 2^{n}\chi\left( \mathcal{O}_Y \right)  +\sum\limits_{\chi \ne \chi_{00\ldots0}  }{\frac{1}{2}L_{\chi}\left( L_{\chi}+K_Y\right)}. 
	       	\end{align*}
	       		       
	       \end{Proposition}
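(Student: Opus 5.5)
The plan is to derive all four formulas from two structural facts about a smooth $\mathbb{Z}_2^n$-cover $f\colon X\to Y$: the \emph{Hurwitz formula} for the canonical class, and the \emph{eigensheaf decomposition} of $f_*\mathcal{O}_X$. After that, everything is intersection theory and Riemann--Roch on $Y$. The first formula and the eigensheaf decomposition are exactly the local analysis in \cite[Propositions 4.1 and 4.2]{MR1103912} and \cite[Remark 3.11]{MR1802792}; I would recall their mechanism rather than reprove every detail.

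\textbf{Step 1: the canonical class.} For $\sigma\neq 0$, every component of $D_\sigma$ has inertia group $\langle\sigma\rangle\cong\mathbb{Z}_2$. Hence over a general point of $D_\sigma$ the cover looks locally like $2^{n-1}$ disjoint copies of a double cover branched along $D_\sigma$. Writing $R_\sigma$ for the reduced preimage of $D_\sigma$, this gives $f^*D_\sigma=2R_\sigma$. Since $X$ is smooth, the ramification formula applies and gives $K_X\sim f^*K_Y+\sum_{\sigma\neq0}R_\sigma$. Doubling yields
\[2K_X\sim f^*\Bigl(2K_Y+\sum_{\sigma\ne0}D_\sigma\Bigr).\]

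\textbf{Step 2: the pushforward.} The group $G=\mathbb{Z}_2^n$ is abelian and we are in characteristic $0$, so $f_*\mathcal{O}_X$ splits into isotypical pieces $\bigoplus_\chi (f_*\mathcal{O}_X)^{\chi}$. The trivial character gives $\mathcal{O}_Y$. For $X$ normal and $f$ flat (automatic here, since $X$ is smooth and $f$ is finite), each eigensheaf is invertible, and by the very definition of the building data in Proposition \ref{Construction of Z_2^n covers} it equals $L_\chi^{-1}$. This gives $f_*\mathcal{O}_X=\mathcal{O}_Y\oplus\bigoplus_{\chi\neq\chi_{00\ldots0}}L_\chi^{-1}$.

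\textbf{Step 3: the numerical consequences.}
\begin{itemize}
\item[(a)] Since $\deg f=2^n$, the projection formula gives $4K_X^2=(2K_X)^2=2^n\bigl(2K_Y+\sum D_\sigma\bigr)^2$. Therefore $K_X^2=2^{n-2}\bigl(2K_Y+\sum D_\sigma\bigr)^2$.
\item[(b)] Since $f$ is finite, $H^i(X,\mathcal{O}_X)=H^i(Y,f_*\mathcal{O}_X)$. For $p_g$ I would compute
\[p_g(X)=h^2(\mathcal{O}_X)=h^2(\mathcal{O}_Y)+\sum_{\chi\neq\chi_{00\ldots0}}h^2(L_\chi^{-1}),\]
and then apply Serre duality, $h^2(L_\chi^{-1})=h^0(K_Y+L_\chi)$.
\item[(c)] Similarly $\chi(\mathcal{O}_X)=\sum_\chi\chi(L_\chi^{-1})$, with $L_{\chi_{00\ldots0}}=\mathcal{O}_Y$. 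Riemann--Roch on $Y$ gives
\[\chi(L^{-1})=\chi(\mathcal{O}_Y)+\tfrac12(-L)(-L-K_Y)=\chi(\mathcal{O}_Y)+\tfrac12L(L+K_Y).\]
Summing over all $2^n$ characters produces $2^n\chi(\mathcal{O}_Y)$ plus the stated sum.
\end{itemize}

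\textbf{Main obstacle.} Step 3 is routine once Steps 1 and 2 are in place. The main obstacle is Step 2: identifying the eigensheaves as exactly the line bundles $L_\chi^{-1}$ of the building data, and justifying the ramification multiplicity $f^*D_\sigma=2R_\sigma$ in Step 1 (with the correct stabilizer) along the branch locus. This requires Pardini's local description of the cover near the normal crossings of $B$. Here I would simply invoke \cite[Theorem 2.1, Propositions 4.1, 4.2]{MR1103912} together with \cite[Remark 3.11]{MR1802792}.
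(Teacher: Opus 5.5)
Your proposal is correct and follows the same route as the paper, which does not prove these facts itself but cites \cite[Propositions 4.1 and 4.2]{MR1103912} and \cite[Remark 3.11]{MR1802792} for the canonical class formula and the eigensheaf decomposition, then treats the numerical formulas as immediate consequences. Your Step 3 fills in exactly the ``This implies that'' part the paper leaves implicit: the projection formula gives $K_X^2$, the identity $H^i(X,\mathcal{O}_X)=H^i(Y,f_*\mathcal{O}_X)$ together with Serre duality gives $p_g$, and Riemann--Roch summed over the $2^n$ characters gives $\chi(\mathcal{O}_X)$.
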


    \subsection{Some facts on global log canonical thresholds}\label{somefacts}
    The following is taken from \cite{MR2465686, MR4216579}. The standard reference is \cite{MR1492525}.
    \begin{Definition}
       Let $X$ be an algebraic surface with log terminal singularities, $Z \subseteq X$ be a closed subvariety, and let $D$ be an effective $\mathbb{Q}$-Cartier on $X$. The number 
       $$ \lct_{Z}\left( X, D\right) :=\sup\left\lbrace \lambda \in \mathbb{Q} \mid \text{the log pair} \left( X,\lambda D\right) \text{is log canonical along } Z  \right\rbrace  $$
       is said to be the log canonical threshold of $D$ along $Z$. If $Z = X$, then we write $\lct(X,D) = \lct_{Z}(X,D)$.
       
    \end{Definition}

    \begin{Definition}
       Let $X$ be an algebraic surface with log terminal singularities. The global log canonical threshold of  $X$ with respect to a $\mathbb{Q}$-Cartier divisor $L$, denoted by $\glct(X, L)$, is defined as the number 
       \begin{equation*}
	    \glct\left( X, L\right) :=\inf\left\lbrace \lct\left( X, D\right) \begin{array}{|l} 
         D \text{ is an effective } \mathbb{Q} \text{-Cartier divisor on } X,\\
        \mathbb{Q}\text{-linearly equivalent to } L. 
        \end{array}
       \!\!\! \right\rbrace\!.
	\end{equation*}
    If $L=K_X$, then we denote this as $\glct\left( X\right) = \glct\left( X, K_X\right)$.
       
    \end{Definition}

    While the value of $\glct(X, L)$ is often difficult to compute directly, it can sometimes be approximated using more tractable invariants. For example, when the linear system $|mL|$ for some positive integer is non-empty, Tian introduced the \emph{$m$-th $\alpha$-invariant} of the pair $(X, L)$ (cf. \cite{zbMATH06851625}), defined by
    \begin{equation*}
    \alpha_m(X, L) := \sup\left\{ \lambda \in \mathbb{Q} \,\middle|\, \text{the pair } \left(X, \tfrac{\lambda}{m} D\right) \text{ is log canonical for every } D \in |mL| \right\}.
    \end{equation*}
    \noindent
    If the linear system $|mL|$ is empty, we simply set $\alpha_m(X, L) = + \infty$. Then we have the inequality $\glct \left(X,L \right) \le \alpha_m(X, L)$. Tian subsequently proposed the following conjecture.
        \begin{Conjecture}[{\cite[Conjecture 5.4]{zbMATH06072915}}]
            Suppose that $L$ is very ample and defines a projectively normal embedding under its associated morphism, that is, the graded algebra
            $$\bigoplus\limits_{i \ge 0  }H^{0}\left(X, \mathcal{O}_{X}\left(iL \right) \right)$$
            is generated by elements in $H^{0}\left(X, \mathcal{O}_{X}\left(L \right) \right)$. Then $\glct \left(X,L \right) = \alpha_1(X, L)$.
        \end{Conjecture}

    We use the following Lemmas into Sections \ref{glctinf} and \ref{glctbur}.
    \begin{Lemma}\label{comarelct}
   		Suppose $L_1$ and $L_2$ are two distinct lines in $\mathbb{A}^2$. Let $n_1$ and $n_2$ be two integers with $n_2\geq n_1>0$. Then $\lct(\mathbb{A}^2, n_1L_1+n_2L_2)=\frac{1}{n_2}$.
   \end{Lemma}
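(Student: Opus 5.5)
Since $L_1$ and $L_2$ are distinct lines in $\mathbb{A}^2$, they are either parallel or meet transversally at one point. In both cases we can choose affine coordinates so that the pair becomes a monomial divisor. If they meet, take $L_1=\{x=0\}$ and $L_2=\{y=0\}$. If they are parallel, take $L_1=\{x=0\}$ and $L_2=\{x=1\}$, so their supports are disjoint. The plan is to compute $\lct$ directly from the definition via a log resolution. We show that $(\mathbb{A}^2,\lambda(n_1L_1+n_2L_2))$ is log canonical exactly when $\lambda\le 1/n_2$, using $n_2\ge n_1>0$.

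In the parallel case, $n_1L_1+n_2L_2$ is a disjoint union of smooth curves with multiplicities $n_1$ and $n_2$. The identity map is already a log resolution. The pair is log canonical if and only if every coefficient is at most $1$, that is, $\lambda n_1\le 1$ and $\lambda n_2\le 1$. Since $n_2\ge n_1$, this is equivalent to $\lambda\le 1/n_2$.

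In the transversal case, $n_1L_1+n_2L_2=\operatorname{div}(x^{n_1}y^{n_2})$ is already a simple normal crossings divisor on the smooth surface $\mathbb{A}^2$. By the standard criterion for snc pairs (a pair $(X,\sum a_iD_i)$ with $X$ smooth and $\sum D_i$ snc is log canonical if and only if all $a_i\le 1$), the pair is log canonical if and only if $\lambda n_1\le1$ and $\lambda n_2\le 1$. Again this means $\lambda\le 1/n_2$.

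If one prefers to check the origin by hand, blow it up and let $E$ be the exceptional curve. The discrepancy is $a(E)=1-\lambda(n_1+n_2)$. This is $\ge -1$ precisely when $\lambda\le 2/(n_1+n_2)$, which holds whenever $\lambda\le 1/n_2$, because $n_1+n_2\le 2n_2$. After this blow-up the total transform is again snc, so no further exceptional divisors need to be checked. The threshold is therefore governed by the coefficient $n_2$ of $L_2$, giving $\lct(\mathbb{A}^2,n_1L_1+n_2L_2)=\frac{1}{n_2}$.

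The only point requiring care is quoting the snc criterion, in particular that no exceptional divisor imposes a stronger condition than the coefficients themselves. This is standard; see \cite{MR1492525}. There is no genuine obstacle.
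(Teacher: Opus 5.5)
Your proof is correct, but the paper does not actually prove this lemma. It only refers to \cite[Example 8.17]{MR1492525} and \cite[Proposition 2.2]{MR1704476}, so your argument is a self-contained replacement for a citation rather than a variant of a written proof.

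Your steps are all sound. An affine change of coordinates reduces to either the disjoint configuration $\{x=0\}\cup\{x=1\}$ or the monomial divisor $\operatorname{div}(x^{n_1}y^{n_2})$. In both cases the identity is already a log resolution, and the snc criterion (log canonical iff every coefficient satisfies $\lambda n_i\le 1$) gives $\lct=\min(1/n_1,1/n_2)=1/n_2$.

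Your extra blow-up computation, $1-\lambda(n_1+n_2)\ge -1$ iff $\lambda\le 2/(n_1+n_2)$, is logically redundant once the snc criterion is invoked. It is still a useful sanity check: since $n_1+n_2\le 2n_2$, the crossing point never imposes a stronger condition than the larger coefficient.

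The citation buys brevity. Your version buys transparency, and it has a further advantage. The same snc argument applies verbatim to the local analytic situation in which the paper actually uses the lemma: two smooth curves on a smooth surface meeting transversally at $P$, with no other component of the divisor through $P$. Examples are $6H_{35}+2E_3$ near $P$ with $\varphi(P)\in e_3\cap h_{35}$ for the non-nodal $K^2=4$ Burniat surfaces, or $2(k-4)\overline{F}_i+2(k'-4)\overline{G}_j$ near $P\in\overline{F}_i\cap\overline{G}_j$. By contrast, the lemma as literally stated concerns lines in $\mathbb{A}^2$.
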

     Lemma \ref{comarelct} follows from \cite[Example 8.17]{MR1492525} or \cite[Proposition 2.2]{MR1704476}.\\

    \begin{Notation}
       Let $\psi\colon X\to Y$ be a finite morphism. If a $\mathbb{Q}$-divisor
    $D$ on $X$ is of the form $D=\psi^*d$ for a $\mathbb{Q}$-divisor $d$ on $Y$,
    we write
    \[
    d=\psi(D).
    \] 
    \end{Notation}
 
    \noindent We present a slight generalization of \cite[Lemma 4.1]{MR4216579} of Kim and the third author.
    \begin{Lemma}\label{log_canonical_between_cover}
    Let $\xymatrix{f: X \ar[r] & Y}$ be a $ \mathbb{Z}^n_2 $-cover between a normal surface $X$ and a smooth surface $Y$ branched along an effective divisor $B$ on $Y$, and $D$ be an effective $\mathbb{Q}$-Cartier divisor on $X$. Then the pair $(Y, f(D) + \frac{1}{2}B)$ is not log canonical at $f(P)$ if the pair $(X,D)$ is not log canonical at a point $P$ on $X$.
\end{Lemma}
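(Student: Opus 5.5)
The plan is to prove the contrapositive: if $(Y, f(D)+\frac12 B)$ is log canonical at $Q=f(P)$, then $(X,D)$ is log canonical at $P$. The main tool is the ramification formula for finite morphisms together with the standard comparison of log canonicity under finite maps (\cite[Proposition 5.20]{MR1492525}). Since $D$ is only $\mathbb{Q}$-Cartier on $X$ and need not be a pullback, I first replace $D$ by a divisor that is a pullback. Let $G=\mathbb{Z}_2^n$ and set $D' := \frac{1}{|G|}\sum_{g\in G} g^*D$. The norm $\sum_{g} g^*D$ is $G$-invariant, so it is the pullback $f^*d$ of a $\mathbb{Q}$-divisor $d$ on $Y$. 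Hence $D'=f^*\big(\tfrac{1}{2^n}d\big)$, and in the paper's notation $f(D')=\frac{1}{2^n}d$. I interpret $f(D)$ in the statement as this $f(D')$; when $D$ is already a pullback, the averaging changes nothing.

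Next I relate the singularities of $(X,D)$ and $(X,D')$. Assume $(X,D)$ is not log canonical at $P$. Then I want $(X,D')$ not log canonical at some point of the orbit $G\cdot P$. Each $g^*D$ fails to be lc at $g^{-1}P$, but averaging could a priori improve the singularity. The clean way around this is convexity of the non-lc condition under translation by automorphisms. Take a divisorial valuation $E$ over $X$ centered at $P$ with discrepancy $a(E,X,D)<-1$. Consider the $G$-orbit of valuations $g_*E$, and use that $\operatorname{ord}$ of $D'$ along $E$ is the average of $\operatorname{ord}_{g_*E} D$. This does not directly give $<-1$ for a single valuation.

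So I expect this averaging step to be the main obstacle. To bypass it, I would instead work directly with the finite map and with the pullback of the pair: the ramification formula $K_X = f^*(K_Y + \frac12 B)$ gives
\[
K_X + D = f^*\Big(K_Y+\tfrac12 B\Big) + D .
\]
Locally near $Q$, I can choose a $\mathbb{Q}$-divisor $\Delta$ on $Y$ with $f^*\Delta = D'$. Then \cite[Proposition 5.20]{MR1492525} says that $(Y,\Delta+\frac12 B)$ is lc at $Q$ iff $(X,D')$ is lc along $f^{-1}(Q)$. It therefore remains to show that non-lc of $(X,D)$ at $P$ forces non-lc of $(X,D')$ near the orbit.

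For this I would use that the non-lc locus is detected by multiplier-ideal type inequalities. The lc threshold function $c(D):=\lct_{G\cdot P}(X,D)$ is concave in $D$ on the effective cone, because log canonicity is preserved under convex combinations of boundaries: if $(X,D_i)$ is lc then $(X,\sum t_iD_i)$ is lc, since discrepancies are affine in the boundary. Applying this in reverse gives only the inequality in the wrong direction, which confirms that averaging is genuinely delicate. Following Kim--Shin \cite[Lemma 4.1]{MR4216579}, I would therefore use the Galois structure. Since the stabilizer structure of the abelian cover is explicit, locally at $P$ the map is (up to analytic coordinates) $(x,y)\mapsto(x^{a},y^{b})$ with $a,b\in\{1,2\}$. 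I would then argue directly with the local model. If $\mathrm{mult}$-type estimates show $(\mathbb{A}^2, D)$ is not lc at $0$, the pushforward $f_*D$ (with appropriate normalization) yields a non-lc pair downstairs, checked via \cite[Proposition 5.20]{MR1492525} applied to $f^*f_*D$ and the inequality $f^*f_*D\ge D$ on the orbit. That last inequality, that the symmetrization dominates $D$ coefficientwise near $P$ after the correct normalization by $|G|$ versus the local degree, is the key computation I would verify carefully.
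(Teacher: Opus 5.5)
Your proposal has a genuine gap, and it sits exactly at the step you postpone at the end. You interpret $f(D)$ as $\frac{1}{2^n}d$ with $f^*d=\sum_{g\in G}g^*D$, so $f^*f(D)$ is the \emph{average} of the translates $g^*D$. With that normalization the statement is false, so no local computation can close the gap.

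Take $n=1$ with involution $\sigma$, a point $P$ off the ramification curve, and a smooth curve $C\ni P$ with $\sigma(P)\notin C$. Then $(X,\frac32 C)$ is not log canonical at $P$. But $f$ is \'etale near $P$, so $f(C)$ is smooth at $f(P)\notin B$. Since $\frac12 f_*(\frac32 C)=\frac34 f(C)$, the pair $(Y,\frac34 f(C)+\frac12 B)$ is log canonical at $f(P)$.

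Normalizing by the local degree instead does not help. Suppose $f(x,y)=(x,y^2)$ near a ramification point $P$ and $C=\{x=y\}$. Then $(X,cC)$ is not lc at $P$ for $c>1$. On the other hand, $(Y,\frac c2 f(C)+\frac12 B)$ pulls back to $(X,\frac c2(C+\sigma C))$, so it is lc for $c\le 2$. Your suspicion that averaging can improve the singularity is therefore correct, and it is fatal for the version you set out to prove. The convexity and local-model detours cannot repair this. In addition, the monomial local model $(x,y)\mapsto(x^a,y^b)$ is not available when only normality of $X$ is assumed.

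The missing idea is simply not to normalize. Take $f(D)$ to be the pushforward $f_*D$, or, in the paper's notation, assume $D=f^*d$ is already a pullback. For a Galois cover, $f^*f_*D=\sum_{g\in G}g^*D$. This is $\ge D$ because the $g=\mathrm{id}$ term is $D$ and all other terms are effective. That is the inequality $f^*(f(D))\ge D$ on which the paper's proof rests, and it needs no computation.

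The argument then finishes as follows:
\begin{itemize}
\item Enlarging the boundary preserves non-log-canonicity, so $(X,f^*f(D))$ is not lc at $P$.
\item The Hurwitz formula $K_X=f^*(K_Y+\frac12 B)$ holds because every inertia group of a divisor in $\mathbb{Z}_2^n$ has order $2$. It gives $K_X+f^*f(D)=f^*(K_Y+\frac12 B+f(D))$.
\item The comparison of log canonicity under finite morphisms \cite[Proposition 3.16]{MR1492525} transfers the failure to $(Y,f(D)+\frac12 B)$ at $f(P)$.
\end{itemize}

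Your final sentence is essentially this argument once the normalization by $|G|$ or by the local degree is dropped. Be aware that the correct version differs by the factor $2^n$ from the normalized divisor you wanted. Passing to the normalized divisor genuinely requires $D$ to be $G$-invariant, not a finer estimate.
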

\begin{proof}
    Suppose that the pair $(X, D)$ is not log canonical at some $P \in X$. Since $f^{*}(f(D)) \ge D$, the pair $(X, f^{*}(f(D)))$ is not log canonical at $P $. By Proposition \ref{invariants of Z_2^n covers}, we get $K_X \sim_{\mathbb{Q}} f^{*}(K_Y + \frac{1}{2}B)$. So 
    $$K_X + f^{*}(f(D)) \sim_{\mathbb{Q}} f^{*}(K_Y + \frac{1}{2}B + f(D)).$$
    Thus, by \cite[Proposition 3.16]{MR1492525}, the pair $(X, f^{*}(f(D)))$ is not log canonical if and only if the pair $(Y, f(D) + \frac{1}{2}B)$ is not log canonical. Therefore, the pair $(Y, f(D) + \frac{1}{2}B)$ is not log canonical.
\end{proof}

    \subsection{Constructions of Burniat Surfaces with $K^2=4$ and $3$.}
    The standard work on the Burniat surfaces with $K_X^2=4,3$ is \cite{MR2609250}. For the sake of convenience, in this section we present the construction of these Burniat surfaces.

    \subsubsection{Burniat surfaces with $K^2 = 4$}
    \paragraph{$K^2 = 4 $ non-nodal type}\label{Construction_of_Burniat_surface_with_K^2=4_non_nodal_type}
    \begin{Notation}\label{Notation of del Pezzo surface of degree 5}
	We denote by $ Y_5 $ the blow-up of $ \mathbb{P}^2$ at five points in general position $ P_1, P_2, P_3, P_4, P_5 $. Let us denote by $ l $ the pull-back of a general line in $ \mathbb{P}^2$, by $ e_1 $, $ e_2 $, $ e_3 $, $ e_4 $, $ e_5 $ the exceptional divisors corresponding to $ P_1 $, $ P_2 $, $ P_3 $, $ P_4 $, $ P_5 $, respectively, by $ t_1 $, $ t_2$, $ t_3$, $ t_4$, $ t_5$ the strict transforms of a general line through $ P_1 $, $ P_2 $, $ P_3 $, $ P_4 $, $ P_5 $, respectively and by $ h_{ij} $ the strict transforms of the line $ P_i  P_j $, for all $ i, j \in \left\lbrace 1,2,3,4,5\right\rbrace  $, respectively. The anti-canonical class 
	\begin{align*}
	-K_{Y_5} \sim 3l-e_1 - e_2 - e_3 - e_4 - e_5.
	\end{align*}	   	   		
	\end{Notation}
 
\noindent
 We consider the following divisors of $ Y_5 $ 
{\small
    \begin{align*}
		B_{1}&:= e_1 + h_{23} + h_{24} + h_{25}  \sim 3l+e_1 - 3e_2 - e_3 - e_4- e_5, &L_{1}&: = 3l-2e_1-e_3 - e_4- e_5\\
		B_{2}&:= e_2 + h_{13} + h_{34} + h_{35}  \sim 3l-e_1 +e_2 - 3e_3 - e_4- e_5, &L_{2}&: = 3l-e_1 -2e_2 - e_4- e_5\\
		B_{3}&:= e_3 + h_{12} + h_{14} + h_{15}  \sim 3l-3e_1 -e_2 +e_3 - e_4- e_5, &L_{3}&: = 3l - e_2 -2e_3 - e_4- e_5.	
	\end{align*}
}
 \noindent

\begin{figure}[H]
\begin{center}

\tikzset{every picture/.style={line width=0.75pt}} 

\begin{tikzpicture}[x=0.75pt,y=0.75pt,yscale=-1,xscale=1]

\draw [color={rgb, 255:red, 74; green, 144; blue, 226 }  ,draw opacity=1 ]   (307,31.6) -- (133,222.6) ;
\draw [color={rgb, 255:red, 208; green, 2; blue, 27 }  ,draw opacity=1 ]   (128,204.6) -- (486,202.6) ;
\draw [color={rgb, 255:red, 189; green, 16; blue, 224 }  ,draw opacity=1 ]   (268,35.6) -- (478,225.6) ;
\draw [color={rgb, 255:red, 74; green, 144; blue, 226 }  ,draw opacity=1 ]   (295,29.6) -- (225,238.6) ;
\draw [color={rgb, 255:red, 74; green, 144; blue, 226 }  ,draw opacity=1 ]   (279.5,30.3) -- (347,233.6) ;
\draw [color={rgb, 255:red, 208; green, 2; blue, 27 }  ,draw opacity=1 ]   (126,212.6) -- (433,113.6) ;
\draw [color={rgb, 255:red, 208; green, 2; blue, 27 }  ,draw opacity=1 ]   (128,217.6) -- (394,58.6) ;
\draw [color={rgb, 255:red, 189; green, 16; blue, 224 }  ,draw opacity=1 ]   (138,158) -- (494,208.6) ;
\draw [color={rgb, 255:red, 189; green, 16; blue, 224 }  ,draw opacity=1 ]   (210,50.6) -- (490,225.6) ;
\draw  [fill={rgb, 255:red, 208; green, 2; blue, 27 }  ,fill opacity=1 ] (284,54) .. controls (284,52.34) and (285.34,51) .. (287,51) .. controls (288.66,51) and (290,52.34) .. (290,54) .. controls (290,55.66) and (288.66,57) .. (287,57) .. controls (285.34,57) and (284,55.66) .. (284,54) -- cycle ;
\draw  [fill={rgb, 255:red, 74; green, 144; blue, 226 }  ,fill opacity=1 ] (450,203) .. controls (450,201.34) and (451.34,200) .. (453,200) .. controls (454.66,200) and (456,201.34) .. (456,203) .. controls (456,204.66) and (454.66,206) .. (453,206) .. controls (451.34,206) and (450,204.66) .. (450,203) -- cycle ;
\draw  [color={rgb, 255:red, 0; green, 0; blue, 0 }  ,draw opacity=1 ][fill={rgb, 255:red, 189; green, 16; blue, 224 }  ,fill opacity=1 ] (147,204) .. controls (147,202.34) and (148.34,201) .. (150,201) .. controls (151.66,201) and (153,202.34) .. (153,204) .. controls (153,205.66) and (151.66,207) .. (150,207) .. controls (148.34,207) and (147,205.66) .. (147,204) -- cycle ;
\draw  [fill={rgb, 255:red, 155; green, 155; blue, 155 }  ,fill opacity=1 ] (303,111.1) .. controls (303,109.44) and (304.34,108.1) .. (306,108.1) .. controls (307.66,108.1) and (309,109.44) .. (309,111.1) .. controls (309,112.76) and (307.66,114.1) .. (306,114.1) .. controls (304.34,114.1) and (303,112.76) .. (303,111.1) -- cycle ;
\draw  [fill={rgb, 255:red, 155; green, 155; blue, 155 }  ,fill opacity=1 ] (244,174) .. controls (244,172.34) and (245.34,171) .. (247,171) .. controls (248.66,171) and (250,172.34) .. (250,174) .. controls (250,175.66) and (248.66,177) .. (247,177) .. controls (245.34,177) and (244,175.66) .. (244,174) -- cycle ;

\draw (151,213) node [anchor=north west][inner sep=0.75pt]  [color={rgb, 255:red, 189; green, 16; blue, 224 }  ,opacity=1 ] [align=left] {$P_1$};
\draw (436,206) node [anchor=north west][inner sep=0.75pt]  [color={rgb, 255:red, 74; green, 144; blue, 226 }  ,opacity=1 ] [align=left] {$P_2$};
\draw (295,46) node [anchor=north west][inner sep=0.75pt]  [color={rgb, 255:red, 208; green, 2; blue, 27 }  ,opacity=1 ] [align=left] {$P_3$};
\draw (249,180) node [anchor=north west][inner sep=0.75pt]  [color={rgb, 255:red, 155; green, 155; blue, 155 }  ,opacity=1 ] [align=left] {$P_4$};
\draw (287,122) node [anchor=north west][inner sep=0.75pt]  [color={rgb, 255:red, 155; green, 155; blue, 155 }  ,opacity=1 ] [align=left] {$P_5$};

\end{tikzpicture}

\caption{Schematic depiction of the branch locus.}
\end{center}
\end{figure}
 
	The divisors $ B_{1}, B_{2}, B_{3}, L_{1}, L_{2}, L_{3} $ define a $ \mathbb{Z}^2_2 $-cover $ \xymatrix{\varphi: X \ar[r] & Y_5}  $ branched along $B_{1}+B_{2}+B_{3}$. The surface $ X $ satisfies the following:		
	\begin{align*}
		2K_{X} &\sim \varphi^*\left( 3l-e_1 - e_2 - e_3 - e_4 - e_5\right) \sim \varphi^*\left( -K_{Y_5}\right).
	\end{align*}
	
    Since the divisor $ 2K_{X} $ is the pull-back of a nef and big divisor, the canonical divisor $ K_{X} $ is nef and big. Thus, the surface $ X $ is of general type and minimal. Furthermore, the surface $X$ possesses the following invariants:
	\begin{align*}
		K_{X}^2=& \left( 3l-e_1 - e_2 - e_3 - e_4 - e_5\right)^2=4,\\
		p_g\left( X\right) =&p_g\left( Y_5 \right) + \sum_{i=1}^3 h^0\left(Y_5, \mathcal{O}_{Y_5}(L_{i} + K_{Y_5}) \right)=0,\\
		\chi\left( \mathcal{O}_{X}\right) =&4\chi\left( \mathcal{O}_{Y_5} \right)  + \sum_{i=1}^3 \frac{1}{2}L_{i}\left( L_{i}+K_{Y_5}\right)=1,\\
		q\left( X\right)  =& 1+p_g\left( X\right)-\chi\left( \mathcal{O}_{X}\right)=0.
	\end{align*}

    \begin{Notation}
        For all $i = 1,2,3$, we denote $E_i := \varphi^{*}(e_i)_{\text{red}}$. Furthermore, we set $\tilde{E}_4 := \varphi^{*}(e_4)$, $\tilde{E}_5 := \varphi^{*}(e_5)$, $\tilde{H}_{45} := \varphi^{*}(h_{45})$, and $H_{ij} := \varphi^{*}(h_{ij})_{\text{red}}$ for all distinct $i, j \in \{1,2,3,4,5\}$ with $(i,j) \neq (4,5)$.
    \end{Notation}

    \paragraph{$K^2 = 4 $ nodal type}\label{Construction_of_Burniat_surface_with_K^2=4_nodal_type}

    \begin{Notation}\label{Notation of Y_4}
        Let $P_1, P_2, P_3, P_4$ be four points in general position in $\mathbb{P}^2$ and $P_5$ be a general point in the line $P_1P_4$. We denote by $ Y_4 $ the blow-up of $ \mathbb{P}^2$ at four points $ P_1, P_2, P_3, P_4$.
    \end{Notation}
    
    \begin{Notation}\label{Notation of weak del Pezzo surface of degree 5}
	We denote by $ Y_{5}' $ the blow-up of $ \mathbb{P}^2$ at five points $ P_1, P_2, P_3, P_4, P_5 $ in Notation \ref{Notation of Y_4}. Let us denote by $ l $ the pull-back of a general line in $ \mathbb{P}^2$, by $ e_1 $, $ e_2 $, $ e_3 $, $ e_4 $, $ e_5 $ the exceptional divisors corresponding to $ P_1 $, $ P_2 $, $ P_3 $, $ P_4 $, $ P_5 $, respectively, by $ t_1 $, $ t_2$, $ t_3$, $ t_4$, $ t_5$ the strict transforms of a general line through $ P_1 $, $ P_2 $, $ P_3 $, $ P_4 $, $ P_5 $, respectively, by $ h_{ij} $ the strict transforms of the line $ P_i  P_j $, for all $ i, j \in \left\lbrace 1,2,3,4,5\right\rbrace  $, respectively except the cases $ i, j \in \left\lbrace 1,4,5\right\rbrace  $, and by $ h_{145} $ the strict transforms of the line $ P_1  P_4 P_5$. The anti-canonical class 
	\begin{align*}
	-K_{Y_{5}'} \sim 3l-e_1 - e_2 - e_3 - e_4 - e_5.
	\end{align*}	   	   		
	\end{Notation}

    \noindent
     We consider the following divisors of $ Y_{5}' $ 
{\small
    \begin{align*}
		B_{1}&:= e_1 + h_{23} + h_{24} + h_{25}  \sim 3l+e_1 - 3e_2 - e_3 - e_4- e_5, &L_{1}&: = 3l-2e_1-e_3 - e_4- e_5\\
		B_{2}&:= e_2 + h_{13} + h_{34} + h_{35}  \sim 3l-e_1 +e_2 - 3e_3 - e_4- e_5, &L_{2}&: = 3l-e_1 -2e_2 - e_4- e_5\\
		B_{3}&:= e_3 + h_{12} + h_{145} + t_{11}  \sim 3l-3e_1 -e_2 +e_3 - e_4- e_5, &L_{3}&: = 3l - e_2 -2e_3 - e_4- e_5,	
	\end{align*}
}
\noindent
where $ t_{11} \in \left| t_1\right| $ is a divisor of $ Y_{5}' $ such that $ B_{1} + B_{2} + B_{3} $ is simple normal crossing. 

    \begin{figure}[H]

    \begin{center}

\tikzset{every picture/.style={line width=0.75pt}} 

\begin{tikzpicture}[x=0.75pt,y=0.75pt,yscale=-1,xscale=1]

\draw [color={rgb, 255:red, 74; green, 144; blue, 226 }  ,draw opacity=1 ]   (307,31.6) -- (133,222.6) ;
\draw [color={rgb, 255:red, 208; green, 2; blue, 27 }  ,draw opacity=1 ]   (128,204.6) -- (486,202.6) ;
\draw [color={rgb, 255:red, 189; green, 16; blue, 224 }  ,draw opacity=1 ]   (268,35.6) -- (478,225.6) ;
\draw [color={rgb, 255:red, 74; green, 144; blue, 226 }  ,draw opacity=1 ]   (295,29.6) -- (225,238.6) ;
\draw [color={rgb, 255:red, 74; green, 144; blue, 226 }  ,draw opacity=1 ]   (279.5,30.3) -- (347,233.6) ;
\draw [color={rgb, 255:red, 208; green, 2; blue, 27 }  ,draw opacity=1 ]   (126,212.6) -- (433,113.6) ;
\draw [color={rgb, 255:red, 208; green, 2; blue, 27 }  ,draw opacity=1 ]   (128,217.6) -- (394,58.6) ;
\draw [color={rgb, 255:red, 189; green, 16; blue, 224 }  ,draw opacity=1 ]   (138,158) -- (494,208.6) ;
\draw [color={rgb, 255:red, 189; green, 16; blue, 224 }  ,draw opacity=1 ]   (170,90.6) -- (492,218.6) ;
\draw  [fill={rgb, 255:red, 208; green, 2; blue, 27 }  ,fill opacity=1 ] (284,54) .. controls (284,52.34) and (285.34,51) .. (287,51) .. controls (288.66,51) and (290,52.34) .. (290,54) .. controls (290,55.66) and (288.66,57) .. (287,57) .. controls (285.34,57) and (284,55.66) .. (284,54) -- cycle ;
\draw  [fill={rgb, 255:red, 74; green, 144; blue, 226 }  ,fill opacity=1 ] (450,203) .. controls (450,201.34) and (451.34,200) .. (453,200) .. controls (454.66,200) and (456,201.34) .. (456,203) .. controls (456,204.66) and (454.66,206) .. (453,206) .. controls (451.34,206) and (450,204.66) .. (450,203) -- cycle ;
\draw  [color={rgb, 255:red, 0; green, 0; blue, 0 }  ,draw opacity=1 ][fill={rgb, 255:red, 189; green, 16; blue, 224 }  ,fill opacity=1 ] (147,204) .. controls (147,202.34) and (148.34,201) .. (150,201) .. controls (151.66,201) and (153,202.34) .. (153,204) .. controls (153,205.66) and (151.66,207) .. (150,207) .. controls (148.34,207) and (147,205.66) .. (147,204) -- cycle ;
\draw  [fill={rgb, 255:red, 155; green, 155; blue, 155 }  ,fill opacity=1 ] (317,150.1) .. controls (317,148.44) and (318.34,147.1) .. (320,147.1) .. controls (321.66,147.1) and (323,148.44) .. (323,150.1) .. controls (323,151.76) and (321.66,153.1) .. (320,153.1) .. controls (318.34,153.1) and (317,151.76) .. (317,150.1) -- cycle ;
\draw  [fill={rgb, 255:red, 155; green, 155; blue, 155 }  ,fill opacity=1 ] (244,174) .. controls (244,172.34) and (245.34,171) .. (247,171) .. controls (248.66,171) and (250,172.34) .. (250,174) .. controls (250,175.66) and (248.66,177) .. (247,177) .. controls (245.34,177) and (244,175.66) .. (244,174) -- cycle ;

\draw (151,213) node [anchor=north west][inner sep=0.75pt]  [color={rgb, 255:red, 189; green, 16; blue, 224 }  ,opacity=1 ] [align=left] {$P_1$};
\draw (436,206) node [anchor=north west][inner sep=0.75pt]  [color={rgb, 255:red, 74; green, 144; blue, 226 }  ,opacity=1 ] [align=left] {$P_2$};
\draw (295,46) node [anchor=north west][inner sep=0.75pt]  [color={rgb, 255:red, 208; green, 2; blue, 27 }  ,opacity=1 ] [align=left] {$P_3$};
\draw (249,180) node [anchor=north west][inner sep=0.75pt]  [color={rgb, 255:red, 155; green, 155; blue, 155 }  ,opacity=1 ] [align=left] {$P_4$};
\draw (318,128) node [anchor=north west][inner sep=0.75pt]  [color={rgb, 255:red, 155; green, 155; blue, 155 }  ,opacity=1 ] [align=left] {$P_5$};

\end{tikzpicture}
\caption{Schematic depiction of the branch locus.}
    \end{center}
\end{figure}

    \noindent
    The divisors $ B_{1}, B_{2}, B_{3}, L_{1}, L_{2}, L_{3} $ define a $ \mathbb{Z}^2_2 $-cover $ \xymatrix{\varphi: X \ar[r] & Y_{5}'}  $ branched along $B_{1}+B_{2}+B_{3}$. The surface $ X $ satisfies the following:		
	\begin{align*}
		2K_{X} &\sim \varphi^*\left( 3l-e_1 - e_2 - e_3 - e_4 - e_5\right) \sim \varphi^*\left( -K_{Y_{5}'}\right).
	\end{align*}

   \noindent 
    Furthermore, the surface $ X $ is of general type and minimal and possesses the following invariants:
    \[K_{X}^2=4, p_g=0, \chi\left( \mathcal{O}_{X}\right)=1, \textup{ and }q(X)=0. \]

    \begin{Notation}
    We denote by $E_i := \varphi^*(e_i)_{\text{red}}$ for $i = 1, 2, 3$, 
    $\tilde{E}_4 := \varphi^*(e_4)$, 
    $\tilde{E}_5 := \varphi^*(e_5)$, $T_{11} := \varphi^*(t_{11})_{\text{red}}$,
    $H_{145} := \varphi^*(h_{145})_{\text{red}}$, and 
    $H_{ij} := \varphi^*(h_{ij})_{\text{red}}$ for all other pairs $1 \le i < j \le 5$.
    \end{Notation}

    \subsubsection{Burniat surfaces with $K^2 = 3$}\label{Construction_of_Burniat_surface_with_K^2=3}
    Let $P_1, P_2, P_3, P_4$ be four points in general position in $\mathbb{P}^2$, $P_5$ be a general point in the line $P_1P_4$, and $P_6$ be the intersection between $P_2P_4$ and $P_3P_5$.

    \begin{Notation}\label{Notation of weak del Pezzo surface of degree 3}
	We denote by $ Y_6 $ the blow-up of $ \mathbb{P}^2$ at six points $ P_1, P_2, P_3, P_4, P_5, P_6 $. Let us denote by $ l $ the pull-back of a general line in $ \mathbb{P}^2$, by $ e_1 $, $ e_2 $, $ e_3 $, $ e_4 $, $ e_5 $, $ e_6 $ the exceptional divisors corresponding to $ P_1 $, $ P_2 $, $ P_3 $, $ P_4 $, $ P_5 $, $ P_6 $, respectively, by $ t_1 $, $ t_2$, $ t_3$, $ t_4$, $ t_5$, $ t_6$ the strict transforms of a general line through $ P_1 $, $ P_2 $, $ P_3 $, $ P_4 $, $ P_5 $, $ P_6 $, respectively, by $ h_{ij} $ the strict transforms of the line $ P_i  P_j $, for all $ i, j \in \left\lbrace 1,2,3,4,5,6\right\rbrace  $, respectively except the cases $ \{i, j\} \subset \left\lbrace 1,4,5\right\rbrace, \left\lbrace 2,4,6\right\rbrace$ or $\left\lbrace 3,5,6\right\rbrace$, and by $ h_{145} $, $ h_{246} $, $ h_{356} $ the strict transforms of the line $ P_1  P_4 P_5$, $ P_2  P_4 P_6$, $ P_3  P_5 P_6$, respectively. The anti-canonical class 
	\begin{align*}
	-K_{Y_6} \sim 3l-e_1 - e_2 - e_3 - e_4 - e_5 -e_6.
	\end{align*}	   	   		
	\end{Notation}

    We consider the following divisors of $ Y_6 $ 
{\small
    \begin{align*}
		B_{1}&:= e_1 + h_{23} + h_{246} + h_{25}  \sim 3l+e_1 - 3e_2 - e_3 - e_4- e_5- e_6, &L_{1}&: = 3l-2e_1-e_3 - e_4- e_5- e_6\\
		B_{2}&:= e_2 + h_{13} + h_{34} + h_{356}  \sim 3l-e_1 +e_2 - 3e_3 - e_4- e_5- e_6, &L_{2}&: = 3l-e_1 -2e_2 - e_4- e_5- e_6\\
		B_{3}&:= e_3 + h_{12} + h_{145} + h_{16}  \sim 3l-3e_1 -e_2 +e_3 - e_4- e_5- e_6, &L_{3}&: = 3l - e_2 -2e_3 - e_4- e_5- e_6.	
	\end{align*}
}
    \begin{figure}[H]  
    \begin{center}

\tikzset{every picture/.style={line width=0.75pt}} 

\begin{tikzpicture}[x=0.75pt,y=0.75pt,yscale=-1,xscale=1]

\draw [color={rgb, 255:red, 74; green, 144; blue, 226 }  ,draw opacity=1 ]   (307,31.6) -- (133,222.6) ;
\draw [color={rgb, 255:red, 208; green, 2; blue, 27 }  ,draw opacity=1 ]   (128,204.6) -- (486,202.6) ;
\draw [color={rgb, 255:red, 189; green, 16; blue, 224 }  ,draw opacity=1 ]   (268,35.6) -- (478,225.6) ;
\draw [color={rgb, 255:red, 74; green, 144; blue, 226 }  ,draw opacity=1 ]   (295,29.6) -- (225,238.6) ;
\draw [color={rgb, 255:red, 74; green, 144; blue, 226 }  ,draw opacity=1 ]   (279.5,30.3) -- (347,233.6) ;
\draw [color={rgb, 255:red, 208; green, 2; blue, 27 }  ,draw opacity=1 ]   (126,212.6) -- (433,113.6) ;
\draw [color={rgb, 255:red, 208; green, 2; blue, 27 }  ,draw opacity=1 ]   (118,207.6) -- (478,170.6) ;
\draw [color={rgb, 255:red, 189; green, 16; blue, 224 }  ,draw opacity=1 ]   (138,158) -- (494,208.6) ;
\draw [color={rgb, 255:red, 189; green, 16; blue, 224 }  ,draw opacity=1 ]   (170,90.6) -- (492,218.6) ;
\draw  [fill={rgb, 255:red, 208; green, 2; blue, 27 }  ,fill opacity=1 ] (284,54) .. controls (284,52.34) and (285.34,51) .. (287,51) .. controls (288.66,51) and (290,52.34) .. (290,54) .. controls (290,55.66) and (288.66,57) .. (287,57) .. controls (285.34,57) and (284,55.66) .. (284,54) -- cycle ;
\draw  [fill={rgb, 255:red, 74; green, 144; blue, 226 }  ,fill opacity=1 ] (450,203) .. controls (450,201.34) and (451.34,200) .. (453,200) .. controls (454.66,200) and (456,201.34) .. (456,203) .. controls (456,204.66) and (454.66,206) .. (453,206) .. controls (451.34,206) and (450,204.66) .. (450,203) -- cycle ;
\draw  [color={rgb, 255:red, 0; green, 0; blue, 0 }  ,draw opacity=1 ][fill={rgb, 255:red, 189; green, 16; blue, 224 }  ,fill opacity=1 ] (147,204) .. controls (147,202.34) and (148.34,201) .. (150,201) .. controls (151.66,201) and (153,202.34) .. (153,204) .. controls (153,205.66) and (151.66,207) .. (150,207) .. controls (148.34,207) and (147,205.66) .. (147,204) -- cycle ;
\draw  [fill={rgb, 255:red, 155; green, 155; blue, 155 }  ,fill opacity=1 ] (317,150.1) .. controls (317,148.44) and (318.34,147.1) .. (320,147.1) .. controls (321.66,147.1) and (323,148.44) .. (323,150.1) .. controls (323,151.76) and (321.66,153.1) .. (320,153.1) .. controls (318.34,153.1) and (317,151.76) .. (317,150.1) -- cycle ;
\draw  [fill={rgb, 255:red, 155; green, 155; blue, 155 }  ,fill opacity=1 ] (244,174) .. controls (244,172.34) and (245.34,171) .. (247,171) .. controls (248.66,171) and (250,172.34) .. (250,174) .. controls (250,175.66) and (248.66,177) .. (247,177) .. controls (245.34,177) and (244,175.66) .. (244,174) -- cycle ;
\draw  [fill={rgb, 255:red, 155; green, 155; blue, 155 }  ,fill opacity=1 ] (328,186.1) .. controls (328,184.44) and (329.34,183.1) .. (331,183.1) .. controls (332.66,183.1) and (334,184.44) .. (334,186.1) .. controls (334,187.76) and (332.66,189.1) .. (331,189.1) .. controls (329.34,189.1) and (328,187.76) .. (328,186.1) -- cycle ;

\draw (151,213) node [anchor=north west][inner sep=0.75pt]  [color={rgb, 255:red, 189; green, 16; blue, 224 }  ,opacity=1 ] [align=left] {$P_1$};
\draw (436,206) node [anchor=north west][inner sep=0.75pt]  [color={rgb, 255:red, 74; green, 144; blue, 226 }  ,opacity=1 ] [align=left] {$P_2$};
\draw (295,46) node [anchor=north west][inner sep=0.75pt]  [color={rgb, 255:red, 208; green, 2; blue, 27 }  ,opacity=1 ] [align=left] {$P_3$};
\draw (249,180) node [anchor=north west][inner sep=0.75pt]  [color={rgb, 255:red, 155; green, 155; blue, 155 }  ,opacity=1 ] [align=left] {$P_4$};
\draw (318,128) node [anchor=north west][inner sep=0.75pt]  [color={rgb, 255:red, 155; green, 155; blue, 155 }  ,opacity=1 ] [align=left] {$P_5$};
\draw (329,168) node [anchor=north west][inner sep=0.75pt]  [color={rgb, 255:red, 155; green, 155; blue, 155 }  ,opacity=1 ] [align=left] {$P_6$};

\end{tikzpicture}
\caption{Schematic depiction of the branch locus.}
    \end{center}
\end{figure}

        \noindent	    
	The divisors $ B_{1}, B_{2}, B_{3}, L_{1}, L_{2}, L_{3} $ define a $ \mathbb{Z}^2_2 $-cover $ \xymatrix{\varphi: X \ar[r] & Y_6}  $ branched along $B_{1}+B_{2}+B_{3}$. The surface $ X $ satisfies the following:		
	\begin{align*}
		2K_{X} &\sim \varphi^*\left( 3l-e_1 - e_2 - e_3 - e_4 - e_5 - e_6\right).
	\end{align*}

    \noindent
    Moreover, the surface $ X $ is of general type and minimal and possesses the following invariants:
    $$K_{X}^2= 3, p_g\left( X\right) = 0, q\left( X\right)  = 0, \chi\left( \mathcal{O}_{X}\right) = 1.$$

    \begin{Notation}
    We denote by $E_i := \varphi^*(e_i)_{\text{red}}$ for $i = 1, 2, 3$, 
    $\tilde{E}_i := \varphi^*(e_i)$, for $i = 4, 5, 6$,    
    $H_{145} := \varphi^*(h_{145})_{\text{red}}$,
    $H_{246} := \varphi^*(h_{246})_{\text{red}}$,
    $H_{356} := \varphi^*(h_{356})_{\text{red}}$, and 
    $H_{ij} := \varphi^*(h_{ij})_{\text{red}}$ for all other pairs $1 \le i < j \le 6$.
    \end{Notation}

    \section{Global log canonical thresholds of infinite families of  surfaces of general type as $ \mathbb{Z}_2^n $-covers of $\mathbb{P}^2$ or $\mathbb{P}^1 \times \mathbb{P}^1$.}\label{glctinf}
In this section we give proofs of Theorems \ref{glct_Z_2^n_covering} and \ref{glct_Z_2^n_covering_P^1xP^1}. And we apply the theorems to concrete cases.

    \subsection{Proof of Theorems \ref{glct_Z_2^n_covering} and \ref{glct_Z_2^n_covering_P^1xP^1}}
    
    \subsubsection{Proof of Theorem \ref{glct_Z_2^n_covering}}

Let us proceed to prove Theorem \ref{glct_Z_2^n_covering}. Since $\glct(X,K_X)=2\glct(X, 2K_X)$, it suffices to prove that $\glct(X, 2K_X) = \frac{1}{2(k-6)}$. We divide the proof of this result into the following two propositions.
\begin{Proposition}\label{upper_bound_of_glct}
    $\glct \left( X,2K_{X}\right) \leq \frac{1}{2(k-6)}.$
\end{Proposition}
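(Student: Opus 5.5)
To prove the upper bound it suffices to exhibit one effective $\mathbb{Q}$-divisor $D\sim_{\mathbb{Q}} 2K_X$ with $\lct(X,D)\le \frac{1}{2(k-6)}$. The natural candidate comes from the branch lines. By Proposition \ref{invariants of Z_2^n covers},
\[
2K_X\sim \varphi^*\bigl(2K_{\mathbb{P}^2}+l_1+\cdots+l_k\bigr)\sim \varphi^*\bigl((k-6)l\bigr),
\]
where $l$ is the class of a line. So I take
\[
D:=(k-6)\varphi^*(l_1)=\varphi^*\bigl((k-6)l_1\bigr)\sim 2K_X .
\]
This is legitimate because $k\ge 7$, so $k-6>0$ and $D$ is effective.

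Next I describe $\varphi^*(l_1)$. Since $l_1$ is a branch component, it equals $D_\sigma$ for some nonzero $\sigma$ with inertia group $\langle\sigma\rangle\cong\mathbb{Z}_2$. Hence $\varphi^*(l_1)=2R_1$ with $R_1=\varphi^*(l_1)_{\mathrm{red}}$, and $R_1$ is smooth because $X$ is smooth, the lines are in general position, and Pardini's local description applies. It follows that $D=2(k-6)R_1$. At a general point of $R_1$ this is a smooth curve with coefficient $2(k-6)$, so by Lemma \ref{comarelct} (or simply because a smooth curve with coefficient $c$ has $\lct=1/c$),
\[
\lct(X,D)=\frac{1}{2(k-6)} .
\]
This gives $\glct(X,2K_X)\le \frac{1}{2(k-6)}$.

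Because $(k-6)l_1\in|(k-6)l|$ pulls back to an honest member of $|2K_X|$, this divisor also gives $\alpha_1(X,2K_X)\le\frac{1}{2(k-6)}$. That bound will be useful for the "in particular" statement.

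The only point needing care is the multiplicity claim $\varphi^*(l_1)=2R_1$ with $R_1$ reduced. I would justify it through Pardini's local structure of a $\mathbb{Z}_2^n$-cover at a general point of $D_\sigma$: there the cover is étale times a double cover branched on $D_\sigma$. If $R_1$ happens to be reducible, that does not matter, since the bound only uses the coefficient along one component.
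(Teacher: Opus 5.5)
Your proposal is correct and is essentially the paper's proof: the paper also takes $e=(k-6)l_i$, uses $2K_X\sim\varphi^*((k-6)l)$ and $\varphi^*(l_i)=2\overline{l}_i$ to obtain $E=2(k-6)\overline{l}_i\sim 2K_X$, and reads off $\lct(X,E)=\frac{1}{2(k-6)}$. Your elementary reason for the threshold (a component with coefficient $2(k-6)$ forces $\lct\le\frac{1}{2(k-6)}$) is the appropriate justification, since Lemma \ref{comarelct} as stated concerns two lines with positive coefficients; your remark that the same divisor bounds $\alpha_1(X,2K_X)$ is a correct bonus.
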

\begin{proof}
    Consider the divisor:
    \begin{align*}
   		e:=(k-6)l_i \text{ for some }   i \in \{1,2,\ldots, k \}.
   	\end{align*}
    By Proposition \ref{invariants of Z_2^n covers}, $2K_X \sim \varphi^{*}((k -6)l)$. So the divisor 
    $$E:= \varphi^{*}\left( e\right) =2(k-6)\overline{l}_i \sim 2K_{X},$$
    where $2\overline{l}_i = \varphi^{*}(l_i)$. This implies that $ \lct_{P}\left( X,E\right) = \frac{1}{2(k-6)} $ for some $P \in \overline{l}_i$ by \cite[Theorem 6.40]{zbMATH02117209}. Thus we get $\glct \left( X,2K_{X}\right) \leq \frac{1}{2(k-6)}.$
\end{proof}

\begin{Proposition}\label{lower_bound_of_glct}
    $\glct \left( X,2K_{X}\right) \geq \frac{1}{2(k-6)}.$
\end{Proposition}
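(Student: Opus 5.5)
We argue by contradiction. Put $\lambda:=\frac{1}{2(k-6)}$ and suppose there is an effective $\mathbb{Q}$-divisor $D\sim_{\mathbb{Q}}2K_X$ such that $(X,\lambda D)$ is not log canonical at some point $P\in X$. Showing this is impossible gives $\lct(X,D)\ge\lambda$ for every such $D$, which is the claim. The plan is to push the problem down to $\mathbb{P}^2$ with Lemma \ref{log_canonical_between_cover} and then get a contradiction from elementary intersection numbers and inversion of adjunction.

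\emph{Reduction to $\mathbb{P}^2$.} By Proposition \ref{invariants of Z_2^n covers}, $2K_X\sim\varphi^*((k-6)l)$, so $\varphi(\lambda D)$ is an effective $\mathbb{Q}$-divisor $\Delta:=\varphi(\lambda D)\sim_{\mathbb{Q}}\frac12 l$, i.e.\ $\deg\Delta=\frac12$. Let $Q:=\varphi(P)$. By Lemma \ref{log_canonical_between_cover}, the pair $\left(\mathbb{P}^2,\Delta+\frac12\sum_{i=1}^k l_i\right)$ is not log canonical at $Q$. Since the lines are in general position, $Q$ lies on at most two of them. Near $Q$ write $\Delta=a\,l_i+b\,l_j+\Omega$, where $\Omega$ contains neither $l_i$ nor $l_j$ (we set $a=0$ or $b=0$ if $Q$ is not on the corresponding line). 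We have $a,b\ge0$ and $a+b\le\frac12$, since intersecting $\Delta$ with a general line gives degree $\frac12$.

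\emph{Case analysis.}
\begin{itemize}
\item[(i)] $Q$ lies on no $l_i$. Let $L$ be a general line through $Q$, not contained in $\operatorname{supp}\Delta$. Then $\mult_Q\Delta\le\Delta\cdot L=\frac12<1$, so the pair is log canonical (even klt) at $Q$. This is a contradiction.
\item[(ii)] $Q$ lies on exactly one line $l_i$. The coefficient of $l_i$ is $\frac12+a\le1$, so raising it to $1$ keeps the pair non-log canonical at $Q$. Inversion of adjunction then forces $(\Omega\cdot l_i)_Q>1$. But $(\Omega\cdot l_i)_Q\le\Omega\cdot l_i=\frac12-a\le\frac12$, a contradiction.
\item[(iii)] $Q=l_i\cap l_j$. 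The boundary near $Q$ is $(\frac12+a)l_i+(\frac12+b)l_j+\Omega$, with both coefficients at most $1$. Raise the coefficient of $l_i$ to $1$ and apply inversion of adjunction along $l_i$. This gives $\left((\tfrac12+b)l_j+\Omega\right)\cdot l_i\big|_Q>1$. The left-hand side is at most $\frac12+b+\Omega\cdot l_i=\frac12+b+(\frac12-a-b)=1-a\le1$, again a contradiction.
\end{itemize}
Together with Proposition \ref{upper_bound_of_glct}, this gives the equality. Moreover, the divisor $E$ there lies in $|2K_X|$, so the same argument also yields $\glct(X,2K_X)=\alpha_1(X,2K_X)$.

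\emph{Main obstacle.} The only delicate point is the bookkeeping in case (iii). Inversion of adjunction needs coefficient exactly $1$ on the curve we restrict to, and increasing a coefficient preserves the failure of log canonicity only because $\frac12+a\le1$. This is exactly where the bound $a+b\le\deg\Delta=\frac12$ is used, and that degree is the place where $\lambda=\frac1{2(k-6)}$ is sharp.
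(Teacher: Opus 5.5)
Your argument is correct and is essentially the paper's proof: you push down to $\bigl(\mathbb{P}^2,\Delta+\frac12\sum l_i\bigr)$ via Lemma~\ref{log_canonical_between_cover}, split according to whether $\varphi(P)$ lies on zero, one, or two branch lines, and apply inversion of adjunction along a line with the same intersection-number bookkeeping as the paper (rescaled by $\lambda$). The only difference is the case off the branch locus, where you use the elementary bound $\mult_Q\Delta\le\Delta\cdot L=\frac12<1$ instead of the paper's appeal to Cheltsov's value $\glct(\mathbb{P}^2,-K_{\mathbb{P}^2})=\frac13$; this makes that step self-contained.
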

\begin{proof}
    Suppose, on the contrary, that $\glct(X,2K_X)<\frac{1}{2(k-6)}$. There exists an effective $ \mathbb{Q} $-Cartier divisor $ D \sim_{\mathbb{Q}} 2K_{X}$ such that $ \left( X, \frac{1}{2(k-6)}D\right)  $ is not log canonical at some point $ P \in X $. We denote by $ d:= \varphi\left( D\right)  $. The pair $ \left( \mathbb{P}^2, \frac{1}{2(k-6)}d +\frac{1}{2}(l_1 + l_2 + \cdots + l_k)\right)  $ is not log canonical at $ \varphi\left( P\right)$ by Lemma \ref{log_canonical_between_cover}.\\

    \noindent
    \textbf{Case 1}: Assume that $\varphi\left( P\right) \notin l_1 + l_2 + \cdots + l_k  $. Since the pair $ \left( \mathbb{P}^2, \frac{1}{2(k-6)}d \right)  $ is not log canonical at $ \varphi\left( P\right)$, $ \glct\left( \mathbb{P}^2, d\right) < \frac{1}{2(k-6)} $. We notice that $3d \sim_\mathbb{Q} (k-6)(-K_{\mathbb{P}^2}) $ and that $ \glct\left( \mathbb{P}^2, -K_{\mathbb{P}^2}\right) = \frac{1}{3} $ by \cite[Theorem 1.7]{MR2465686}. We get a contradiction.\\

    \noindent
    \textbf{Case 2}: Assume that $\varphi\left( P\right) \in l_1 + l_2 + \cdots + l_k  $.\\

    \noindent
    \textbf{Case 2.1}:   $\varphi\left( P\right) \in l_i \setminus \bigcup\limits^{k}_{j \neq i} {l_{j}}$ for some $ i \in \{ 1,2,\ldots,k \}$.\\
    \noindent
    Write
    $$d = a_il_i +  \omega,$$
    \noindent
    where $a_i \ge 0$ and $l_i \nsubseteq \textup{supp}(\omega)$. We have that
    $$k-6 = l\cdot d \ge a_i,$$
    \noindent
    where $l$ is a general line in $\mathbb{P}^2$. Because the pair $ \left( \mathbb{P}^2, \frac{1}{2(k-6)}d +\frac{1}{2}l_i\right)  $ is not log canonical at $\varphi(P)$ and $ \frac{a_{i}}{2(k-6)}+\frac{1}{2} \le 1$, the pair $ \left( \mathbb{P}^2, l_{i} +\frac{1}{2(k-6)}\omega\right)  $ is not log canonical at $ \varphi(P) $. By the inversion of adjunction formula \cite[Theorem 7.5]{MR1492525}, we get $ \left( l_{i}, \left.\frac{1}{2(k-6)}\omega\right|_{l_{i}}  \right)  $ is not log canonical at $ \varphi(P) $. 
    This yields that 
    \begin{align*}
		(k-6) -a_i &=\left(  d - a_{i}l_{i} \right)\cdot l_{i} \\ 
        &= \omega\cdot l_{i} \geq \mult_{\varphi(P)}\left( \omega|_{l_{i}} \right) >2(k-6).
    \end{align*}
    \noindent
    This contradicts the inequalities $a_i \ge 0$ and $k \ge 7$.\\

    \noindent
    \textbf{Case 2.2}: $\varphi\left( P\right) \in l_i \cap l_{j}$ for some $ i,j \in \{ 1,2,\ldots,k \}$ and $i \neq j$.\\
    \noindent
    Write
    $$d = a_il_i + a_jl_j +  \omega,$$
    \noindent
    where $a_i, a_j \ge 0$ and $l_i, l_j \nsubseteq \textup{supp}(\omega)$. We notice that
    $$k-6 = l\cdot d \ge a_i +a_j,$$
    \noindent
    where $l$ is a general line in $\mathbb{P}^2$. Because the pair $ \left( \mathbb{P}^2, \frac{1}{2(k-6)}d +\frac{1}{2}l_i +\frac{1}{2}l_j\right)  $ is not log canonical at $\varphi(P)$ and $ \frac{a_{i}}{2(k-6)}+\frac{1}{2} \le 1$, the pair $ \left( \mathbb{P}^2, l_{i} +\frac{1}{2(k-6)}\omega + \left(\frac{a_{j}}{2(k-6)}+\frac{1}{2}\right)l_j\right)  $ is not log canonical at $ \varphi(P) $. By the inversion of adjunction formula, we get $ \left( l_{i}, \left.\left(\frac{1}{2(k-6)}\omega+ \left(\frac{a_{j}}{2(k-6)}+\frac{1}{2}\right)l_j\right)\right|_{l_{i}}  \right)  $ is not log canonical at $ \varphi(P) $. 
    This yields that 
    \begin{align*}
		(k-6) -a_i -a_j +a_j + (k-6)&=\left(  d - a_{i}l_{i}- a_{j}l_{j} \right)\cdot l_{i} + (a_j + (k-6))l_j\cdot l_{i}\\ 
        &= (\omega + (a_j + (k-6))l_j)\cdot l_{i}\\
        &\geq \mult_{\varphi(P)}\left( (\omega + (a_j + (k-6))l_j)|_{l_{i}} \right) >2(k-6).
    \end{align*}
    \noindent
    So we obtain $a_i < 0$. This contradicts the inequality $a_i \ge 0$.    
    \end{proof}

    \subsubsection{Proof of Theorem \ref{glct_Z_2^n_covering_P^1xP^1}}

    We now proceed to prove Theorem \ref{glct_Z_2^n_covering_P^1xP^1}. Since $\glct(X,K_X)=2\glct(X, 2K_X)$, it suffices to prove that $\glct(X, 2K_X) = \frac{1}{2(k'-4)}$. The proof is structured around the following two propositions.

  \begin{Lemma}\label{glct of the sum of two fibers}
For any integers $a,b$ with $a\ge b>0$, $$\glct(\mathbb{P}^1\times\mathbb{P}^1, aF+bG)=\frac{1}{a}.$$
\end{Lemma}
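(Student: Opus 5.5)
We prove the two inequalities separately.

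For the upper bound, take $D := aF_0 + bG_0$ with $F_0 \in |F|$ and $G_0 \in |G|$ fixed fibres. Then $D \sim aF+bG$, and near the point $F_0\cap G_0$ the pair looks like $(\mathbb{A}^2, aL_1+bL_2)$ with two transversal lines. By Lemma \ref{comarelct} (with $n_2=a\ge n_1=b$), $\lct(\mathbb{P}^1\times\mathbb{P}^1, D)=\frac1a$. Hence $\glct(\mathbb{P}^1\times\mathbb{P}^1, aF+bG)\le \frac1a$.

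For the lower bound, argue by contradiction. Suppose there is an effective $\mathbb{Q}$-divisor $D\sim_{\mathbb{Q}} aF+bG$ such that $(\mathbb{P}^1\times\mathbb{P}^1, \frac1a D)$ is not log canonical at some point $Q$. Let $F_Q$ and $G_Q$ be the fibres through $Q$, and write
$$D = \alpha F_Q + \beta G_Q + \Omega,$$
where $\alpha,\beta\ge 0$ and $F_Q, G_Q\not\subseteq \support(\Omega)$. Intersecting with a general $G$ and a general $F$ gives
$$\alpha \le D\cdot G = a, \qquad \beta\le D\cdot F = b \le a.$$
Non-log-canonicity at a smooth point forces $\mult_Q(\frac1a D)>1$, i.e.\ $\mult_Q D > a$. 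We then apply inversion of adjunction along $F_Q$, in the same way as Case 2.1 of Proposition \ref{lower_bound_of_glct}. Since $\frac{\alpha}{a}\le 1$, the pair
$$\Bigl(\mathbb{P}^1\times\mathbb{P}^1,\ F_Q+\tfrac{\beta}{a}G_Q+\tfrac1a\Omega\Bigr)$$
is still not log canonical at $Q$. By \cite[Theorem 7.5]{MR1492525}, $\bigl(F_Q, (\frac{\beta}{a}G_Q+\frac1a\Omega)|_{F_Q}\bigr)$ is not log canonical at $Q$, so the local degree at $Q$ satisfies
$$\beta + \mult_Q(\Omega|_{F_Q}) > a.$$
On the other hand, $\beta + \Omega\cdot F_Q = (D-\alpha F_Q)\cdot F_Q = D\cdot F = b \le a$, because $F_Q^2=0$. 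Since $\mult_Q(\Omega|_{F_Q})\le \Omega\cdot F_Q$, this is a contradiction.

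The main point to check is that $F_Q$ is the right curve for the adjunction. Restricting to $F_Q$ bounds the degree by $D\cdot F=b\le a$, whereas restricting to $G_Q$ would only give the bound $a$ (still enough to contradict a strict inequality $>a$). Either choice works, and the key inputs are $F^2=G^2=0$ and $\alpha\le a$, which make the coefficient of $F_Q$ at most $1$ after scaling. The one subtlety is the boundary case $\alpha=a$, where the coefficient of $F_Q$ is exactly $1$. Increasing it to $1$ is harmless in every case: adding an effective divisor preserves non-log-canonicity, so the argument goes through unchanged.
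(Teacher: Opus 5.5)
Your proof is correct and follows essentially the same route as the paper. The upper bound uses the divisor $aF_0+bG_0$ and Lemma~\ref{comarelct}; the lower bound bounds the coefficient of the $F$-fibre through $Q$ by $D\cdot G=a$, applies inversion of adjunction along that fibre, and contradicts $D\cdot F=b\le a$. Your fibres $F_Q$, $G_Q$ through $Q$ simply make explicit what the paper writes as $F$ and $G$.
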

\begin{proof}
By Lemma \ref{comarelct} we have $$\glct(\mathbb{P}^1\times\mathbb{P}^1, aF+bG)\le\frac{1}{a}.$$

Now it suffices to show that $\glct(\mathbb{P}^1\times\mathbb{P}^1, aF+bG)\ge\frac{1}{a}$. Suppose that $\glct(\mathbb{P}^1\times\mathbb{P}^1, aF+bG)<\frac{1}{a}$. Then there exists a $\mathbb{Q}$-effective divisor $C$ such that $C\sim_{\mathbb{Q}} aF+bG$ and the pair $(\mathbb{P}^1\times\mathbb{P}^1, \frac{1}{a}C)$ is not log canonical at some point $Q\in \mathbb{P}^1\times\mathbb{P}^1$.

Write $$C=a_1 F + b_1 G+ \omega,$$ where $a_1, b_1 \ge 0$ and $F,G\nsubseteq \textup{supp}(\omega)$.
Since $a=G(aF+bG)=GC\ge a_1$ we have $1\ge \frac{a_1}{a}$. Thus the pair $(\mathbb{P}^1\times\mathbb{P}^1, F+\frac{b_1}{a}G+\frac{1}{a}\omega)$ is not log canonical at $Q$. By the inversion of adjunction formula the pair $(F,\left( \frac{b_1}{a}G+\frac{1}{a}\omega\right)\mid_ F)$ is not log canonical at $Q$. Then $$b=(C-a_1F)F=(b_1G+\omega)F\ge\mult_{Q}(b_1G+\omega)\mid_F)>a$$
which is a contradiction.
\end{proof}

\begin{Proposition}\label{upper_bound_of_glct_covering_P^1xP^1}
    $\glct \left( X,2K_{X}\right) \leq \frac{1}{2(k'-4)}.$
    \end{Proposition}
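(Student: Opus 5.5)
We mimic the proof of Proposition \ref{upper_bound_of_glct}: we exhibit an explicit effective divisor $\mathbb{Q}$-linearly equivalent to $2K_X$ whose log canonical threshold is exactly $\frac{1}{2(k'-4)}$. The first step is to compute the class of $2K_X$ using Proposition \ref{invariants of Z_2^n covers}. Since $K_{\mathbb{P}^1\times\mathbb{P}^1}\sim -2F-2G$ and the branch divisor $\sum_{i=1}^k F_i+\sum_{j=1}^{k'}G_j$ is linearly equivalent to $kF+k'G$, we obtain
\begin{align*}
2K_X\sim \varphi^{*}\left((k-4)F+(k'-4)G\right).
\end{align*}
Both coefficients are positive because $k'\ge k\ge 5$.

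Next we choose the divisor. Fix a fiber $G_j$ in the branch locus, which carries the larger coefficient $k'-4$, and a fiber $F_i$ in the branch locus. Set
\begin{align*}
e:=(k-4)F_i+(k'-4)G_j\sim (k-4)F+(k'-4)G .
\end{align*}
Because the cover is branched along $F_i$ and $G_j$ with ramification index $2$, we have $\varphi^*(F_i)=2\overline{F}_i$ and $\varphi^*(G_j)=2\overline{G}_j$, where $\overline{F}_i$ and $\overline{G}_j$ are reduced. This is the same phenomenon used for $\overline{l}_i$ in the proof of Proposition \ref{upper_bound_of_glct}. Hence
\begin{align*}
E:=\varphi^*(e)=2(k-4)\overline{F}_i+2(k'-4)\overline{G}_j\sim 2K_X .
\end{align*}

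It remains to compute $\lct(X,E)$. Since $X$ is smooth, the branch locus is simple normal crossing, and the cover is smooth, the reduced preimages $\overline{F}_i$ and $\overline{G}_j$ are smooth curves meeting transversally, so $E$ has simple normal crossing support. At a point $P\in \overline{G}_j\setminus \overline{F}_i$, the divisor $E$ is locally $2(k'-4)$ times a smooth curve, so $\lct_P(X,E)=\frac{1}{2(k'-4)}$ by \cite[Theorem 6.40]{zbMATH02117209}. At an intersection point of $\overline{F}_i$ and $\overline{G}_j$, Lemma \ref{comarelct} gives the value $\frac{1}{2\max(k-4,k'-4)}=\frac{1}{2(k'-4)}$ as well. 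Therefore $\lct(X,E)=\frac{1}{2(k'-4)}$, and consequently $\glct(X,2K_X)\le \frac{1}{2(k'-4)}$.

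One could even take simply $e=(k'-4)G_j+(k-4)F$ with $F$ a general fiber, which avoids the crossing point. The only delicate step is justifying that $\varphi^*(G_j)$ is exactly twice a reduced smooth curve. This follows from Pardini's local description of a smooth $\mathbb{Z}_2^n$-cover along a smooth branch component, since the inertia group there is $\mathbb{Z}_2$.
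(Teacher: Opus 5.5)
Your proposal is correct and takes essentially the same route as the paper. The paper uses the same divisor $e=(k-4)F_i+(k'-4)G_j$ with pullback $E=2(k-4)\overline{F}_i+2(k'-4)\overline{G}_j\sim 2K_X$, and applies Lemma \ref{comarelct} at a point of $\overline{F}_i\cap\overline{G}_j$ to get $\frac{1}{2(k'-4)}$. Your additional points (the general point of $\overline{G}_j$, the variant with a general fiber $F$, and the check that $\varphi^*(G_j)$ is twice a reduced smooth curve) just spell out what the paper leaves implicit.
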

    \begin{proof}
   Consider the following divisor:
    \begin{align*}
   		e:=(k-4)F_i + (k'-4)G_j\hskip 0,5cm \text{for some } i \in \{1,2,\ldots, k \} \text{ and }  j \in \{1,2,\ldots, k' \}.
   	\end{align*}
    Since $2K_X = \varphi^{*}((k-4)F + (k'-4)G)$ the divisor 
    $$E:= \varphi^{*}\left( e\right) =2(k-4)\overline{F}_i + 2(k'-4)\overline{G}_j \sim 2K_{X},$$
    where $2\overline{F}_i = \varphi^{*}(F_i)$ and $2\overline{G}_j = \varphi^{*}(G_j)$. This implies that $ \lct_{P}\left( X,E\right) = \frac{1}{2(k'-4)} $ at $P \in \overline{F}_i \cap \overline{G}_j$ by Lemma \ref{comarelct}. Thus $\glct \left( X,2K_{X}\right) \leq \frac{1}{2(k'-4)}.$
\end{proof}

    \begin{Proposition}\label{lower_bound_of_glct_covering_P^1xP^1}
    $\glct \left( X,2K_{X}\right) \geq \frac{1}{2(k'-4)}.$
    \end{Proposition}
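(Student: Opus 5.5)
We argue by contradiction, following the proof of Proposition \ref{lower_bound_of_glct}. Suppose $\glct(X,2K_X)<\frac{1}{2(k'-4)}$. Then there is an effective $\mathbb{Q}$-Cartier divisor $D\sim_{\mathbb{Q}}2K_X$ such that $(X,\frac{1}{2(k'-4)}D)$ is not log canonical at some point $P\in X$. Put $d:=\varphi(D)$, so that $d\sim_{\mathbb{Q}}(k-4)F+(k'-4)G$. By Lemma \ref{log_canonical_between_cover}, the pair
$$\Bigl(\mathbb{P}^1\times\mathbb{P}^1,\ \tfrac{1}{2(k'-4)}d+\tfrac12\bigl(\textstyle\sum F_i+\sum G_j\bigr)\Bigr)$$
is not log canonical at $Q:=\varphi(P)$. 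We split into cases according to the position of $Q$ relative to the branch locus.

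\textbf{Case 1: $Q$ lies off the branch locus.} Then $(\mathbb{P}^1\times\mathbb{P}^1,\frac{1}{2(k'-4)}d)$ is not log canonical at $Q$, and hence neither is $(\mathbb{P}^1\times\mathbb{P}^1,\frac{1}{k'-4}d)$. Since $k'\ge k$, Lemma \ref{glct of the sum of two fibers} with $a=k'-4\ge b=k-4>0$ gives $\glct(\mathbb{P}^1\times\mathbb{P}^1,d)=\frac{1}{k'-4}$. This is a contradiction (we even have a factor $2$ to spare).

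\textbf{Case 2: $Q$ lies on exactly one branch curve, or at a crossing $F_i\cap G_j$.} We use inversion of adjunction as in Cases 2.1 and 2.2 of Proposition \ref{lower_bound_of_glct}. Write $d=a_iF_i+b_jG_j+\omega$ with $F_i,G_j\not\subseteq\support(\omega)$, dropping the term for any curve not passing through $Q$. Intersecting with a general $G$ gives $a_i\le k-4$, and intersecting with a general $F$ gives $b_j\le k'-4$. Hence the coefficients $\frac{a_i}{2(k'-4)}+\frac12$ and $\frac{b_j}{2(k'-4)}+\frac12$ are both at most $1$.

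Suppose $Q\in F_i$. Raising the coefficient of $F_i$ to $1$, we find that $\bigl(\mathbb{P}^1\times\mathbb{P}^1, F_i+\frac{1}{2(k'-4)}\omega+(\frac{b_j}{2(k'-4)}+\frac12)G_j\bigr)$ is not log canonical at $Q$. Adjunction to $F_i$ then gives
$$\omega\cdot F_i+\Bigl(b_j+(k'-4)\Bigr)G_j\cdot F_i>2(k'-4).$$
The left side equals $\bigl((k'-4)-b_j\bigr)+b_j+(k'-4)=2(k'-4)$, a contradiction. When $Q$ lies only on $F_i$, drop the $G_j$ term; the left side is then $k'-4>2(k'-4)$, which is impossible.

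If instead $Q$ lies only on $G_j$, restricting to $G_j$ gives $\omega\cdot G_j=(k-4)-a_i\le k-4$. This must exceed $2(k'-4)\ge 2(k-4)$, which is again impossible.

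\textbf{Main obstacle.} The delicate step is the crossing point. There one must choose to adjoin along the curve whose intersection computation closes up. Restricting to $F_i$ uses $F_i\cdot G=1$ together with the bound $b_j\le k'-4$, and yields exactly the equality $2(k'-4)$ against a strict inequality. This mirrors Case 2.2 of Proposition \ref{lower_bound_of_glct}, where the inequality $a_i\ge0$ was needed to conclude.

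If $k=k'$, the roles of $F$ and $G$ are symmetric, so either choice works. When $k<k'$, adjoining along $G_j$ gives the weaker bound $(k-4)+(k'-4)$ on the left side. Since $(k-4)+(k'-4)\le 2(k'-4)$, this still yields a contradiction, so either curve in fact suffices. The hypotheses $k'\ge k\ge5$ ensure that all coefficients stay nonnegative and at most $1$.
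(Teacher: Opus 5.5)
Your proposal is correct and follows the paper's proof essentially step for step. It uses the same reduction to $\mathbb{P}^1\times\mathbb{P}^1$ via the covering lemma, the same appeal to $\glct(\mathbb{P}^1\times\mathbb{P}^1,aF+bG)=\frac{1}{a}$ off the branch locus, the bounds $a_i\le k-4$ and $b_j\le k'-4$ from intersecting with general fibers, and inversion of adjunction along $F_i$ at a crossing point giving $2(k'-4)>2(k'-4)$. Your extra remark that adjoining along $G_j$ at the crossing also works, yielding $(k-4)+(k'-4)\le 2(k'-4)$, is correct but not needed.
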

    \begin{proof}
        Suppose, on the contrary, that $\glct(X,2K_X)<\frac{1}{2(k'-4)}$. There exists an effective $ \mathbb{Q} $-Cartier divisor $ D \sim_{\mathbb{Q}} 2K_{X}$ such that $ \left( X, \frac{1}{2(k'-4)}D\right)  $ is not log canonical at some point $ P \in X $. We denote by $ d:= \varphi\left( D\right)  $. Then the pair $$ \left( \mathbb{P}^1 \times \mathbb{P}^1, \frac{1}{2(k'-4)}d +\frac{1}{2}\left(\sum \limits_{i=1}^{k}{F_i} + \sum \limits_{j=1}^{k'}{G_j}\right)\right)  $$ is not log canonical at $ \varphi\left( P\right)$.\\

    \noindent
    \textbf{Case 1}: Assume that $\varphi\left( P\right) \notin \textup{supp}\left(\sum \limits_{i=1}^{k}{F_i} + \sum \limits_{j=1}^{k'}{G_j}\right)  $. Since $ \left( \mathbb{P}^1 \times \mathbb{P}^1, \frac{1}{2(k'-4)}d \right)  $ is not log canonical at $ \varphi\left( P\right)$, $ \glct\left( \mathbb{P}^1 \times \mathbb{P}^1, d\right) < \frac{1}{2(k'-4)} $. On the other hand since $d\sim_{\mathbb{Q}}(k-4)F+(k'-4)G$ we obtain $\glct(\mathbb{P}^1\times\mathbb{P}^1,d)=\frac{1}{k'-4}$ by Lemma \ref{glct of the sum of two fibers}.  We get a contradiction.\\

    \noindent
    \textbf{Case 2}: Assume that $\varphi\left( P\right) \in \textup{supp}\left(\sum \limits_{i=1}^{k}{F_i} + \sum \limits_{j=1}^{k'}{G_j}\right)  $.\\

    \noindent
    \textbf{Case 2.1}:   $\varphi\left( P\right) \in \left(F_i \setminus \bigcup\limits^{k'}_{j =1} {G_{j}}\right) \bigcup \left(G_{i'} \setminus \bigcup\limits^{k}_{j'=1 } {F_{j'}}\right)$ for some $i \in \{ 1,2,\ldots,k \}$ and $i' \in \{ 1,2,\ldots,k' \}$.\\
    \noindent
   (1)  Assume that $\varphi\left( P\right) \in F_i \setminus \bigcup\limits^{k'}_{j =1} {G_{j}} $. Write
    $$d = a_iF_i +  \omega,$$
    \noindent
    where $a_i \ge 0$ and $F_i \nsubseteq \textup{supp}(\omega)$. We have that
    $$k-4 = G\cdot d \ge a_i,$$
    \noindent
    where $G \in | G |$ is a general fiber in $\mathbb{P}^1 \times \mathbb{P}^1$. Because the pair $ \left( \mathbb{P}^1 \times \mathbb{P}^1, \frac{1}{2(k'-4)}d +\frac{1}{2}F_i\right)  $ is not log canonical at $\varphi(P)$ and $ \frac{a_{i}}{2(k'-4)}+\frac{1}{2} \le 1$, the pair $ \left( \mathbb{P}^1 \times \mathbb{P}^1, F_{i} +\frac{1}{2(k'-4)}\omega\right)  $ is not log canonical at $ \varphi(P) $. By the inversion of adjunction formula, we get $ \left( F_{i}, \frac{1}{2(k'-4)}\omega\mid_{F_{i}}  \right)  $ is not log canonical at $ \varphi(P) $. 
    This yields that 
    \begin{align*}
		k'-4  &=\left(  d - a_{i}F_{i} \right)\cdot F_{i} \\ 
        &= \omega\cdot F_{i} \geq \mult_{\varphi(P)}\left( \omega\mid_{F_{i}} \right) >2(k'-4).
    \end{align*}
    \noindent
    This contradicts the inequality $k' \ge 5$. \\
    
    \noindent
    (2) Assume that  $\varphi(P)\in G_{i'} \setminus \bigcup\limits^{k}_{j'=1 } {F_{j'}}$. Write
    $$d = b_{i'}G_{i'} +  \omega,$$
    \noindent
    where $b_{i'} \ge 0$ and $G_{i'} \nsubseteq \textup{supp}(\omega)$. We have that
    $$k'-4 = F\cdot d \ge b_{i'},$$
    \noindent
    where $F \in | F |$ is a general fiber in $\mathbb{P}^1 \times \mathbb{P}^1$. Because the pair $ \left( \mathbb{P}^1 \times \mathbb{P}^1, \frac{1}{2(k'-4)}d +\frac{1}{2}G_{i'}\right)  $ is not log canonical at $\varphi(P)$ and $ \frac{b_{i'}}{2(k'-4)}+\frac{1}{2} \le 1$, the pair $ \left( \mathbb{P}^1 \times \mathbb{P}^1, G_{i'} +\frac{1}{2(k'-4)}\omega\right)  $ is not log canonical at $ \varphi(P) $. By the inversion of adjunction formula, we get $ \left( G_{i'}, \frac{1}{2(k'-4)}\omega\mid_{G_{i'}}  \right)  $ is not log canonical at $ \varphi(P) $. 
    This yields that 
    \begin{align*}
		k-4  &=\left(  d - b_{i'}G_{i'} \right)\cdot G_{i'} \\ 
        &= \omega\cdot G_{i'} \geq \mult_{\varphi(P)}\left( \omega\mid_{G_{i'}} \right) >2(k'-4).
    \end{align*}
    \noindent
    This contradicts the inequality $k' \ge k\ge 5$. \\
    
    \noindent
    \textbf{Case 2.2}: $\varphi\left( P\right) \in F_i \cap G_{j}$ for some $i \in \{ 1,2,\ldots,k \}$ and $j \in \{ 1,2,\ldots,k' \}$.\\
    \noindent
    Write
    $$d = a_iF_i + b_jG_j +  \omega,$$
    \noindent
    where $a_i, b_j \ge 0$ and $F_i, G_j \nsubseteq \textup{supp}(\omega)$. We notice that
    $$k-4 = G\cdot d \ge a_i,$$
    \noindent
    where $G \in | G |$ is a general fiber in $\mathbb{P}^1 \times \mathbb{P}^1$. Because the pair $ \left( \mathbb{P}^1 \times \mathbb{P}^1, \frac{1}{2(k'-4)}d +\frac{1}{2}F_i +\frac{1}{2}G_j\right)  $ is not log canonical at $\varphi(P)$ and $ \frac{a_{i}}{2(k'-4)}+\frac{1}{2} \le 1$, the pair $$ \left( \mathbb{P}^1 \times \mathbb{P}^1, F_{i} +\frac{1}{2(k'-4)}\omega + \left(\frac{b_{j}}{2(k'-4)}+\frac{1}{2}\right)G_j\right)  $$ is not log canonical at $ \varphi(P) $. By the inversion of adjunction formula, we get $ \left( F_{i}, \left(\frac{1}{2(k'-4)}\omega+ \left(\frac{b_{j}}{2(k'-4)}+\frac{1}{2}\right)G_j\right)\mid_{F_{i}}  \right)  $ is not log canonical at $ \varphi(P) $. 
    This yields that 
    \begin{align*}
		(k'-4) -b_j +b_j + (k'-4)&=\left(  d - a_{i}F_{i}- b_{j}G_{j} \right)\cdot F_{i} + (b_j + (k'-4))G_j\cdot F_{i}\\ 
        &= (\omega + (b_j + (k'-4))G_j)\cdot F_{i}\\
        &\geq \mult_{\varphi(P)}\left( (\omega + (b_j + (k'-4))G_j)\mid_{F_{i}} \right) >2(k'-4).
    \end{align*}
    \noindent
    So we obtain a contradiction.
     \end{proof}

    \subsection{Applications of Theorems \ref{glct_Z_2^n_covering} and \ref{glct_Z_2^n_covering_P^1xP^1}} \label{application_glct_Z_2^n}

    Let us highlight some consequences of Theorem \ref{glct_Z_2^n_covering}. By choosing each branch component $D_{\sigma}$ to be a line, it can be shown that there exists an infinite sequence of smooth surfaces of general type $X_n$ for $n \geq 3$ such that 
    $$
    \mathrm{glct}(X_n) = \frac{1}{2^n - 7}.
    $$
    \begin{Corollary}\label{Campedelli's generation}
    Let $n$ be an integer such that $n \ge 3$, and $l_{\sigma} $, where $\sigma \in \mathbb{Z}^n_2 \setminus \{0 \}$, denote $2^{n}-1$ lines in general position in $\mathbb{P}^2$. Suppose that $ \xymatrix{\varphi: X_n \ar[r] & \mathbb{P}^2}  $ is a smooth $ \mathbb{Z}^n_2 $-cover, branched along the total branch divisor $ \sum\limits_{\sigma \ne 0}{D_{\sigma}} $, where $D_{\sigma} = l_{\sigma}$, for all $ \sigma \ne 0$. Then $X_n$ is a smooth surface of general type such that
    \begin{align*}
        K_{X_n}^2 = 2^{n-2}(2^n -7)^2, \hskip 0.2cm p_g(X_n) = (2^{n}-1)(1 + 2^{n-3}(2^{n-2}-3)), \hskip 0.2cm q(X_n) = 0.
    \end{align*}
    Furthermore, 
    $$\glct{(X_n)} = \frac{1}{2^n -7}.$$
    \end{Corollary}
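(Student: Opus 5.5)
The plan is to verify that the hypotheses of Theorem \ref{glct_Z_2^n_covering} hold with $k = 2^n-1$ and then read off everything. Since $n\ge 3$ we have $k = 2^n-1\ge 7$, and $k-6 = 2^n-7$. Once $X_n$ is known to be a smooth irreducible $\mathbb{Z}_2^n$-cover of $\mathbb{P}^2$ branched along $k$ lines in general position, Theorem \ref{glct_Z_2^n_covering} gives $\glct(X_n)=\frac{1}{2^n-7}$ directly. What remains is existence, smoothness and irreducibility of the cover, and the computation of the invariants.

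\textbf{Existence.} I use Proposition \ref{Construction of Z_2^n covers}. Since $\Picard(\mathbb{P}^2)\cong\mathbb{Z}$ has no torsion, the building data are determined by the branch data, and I take
$$2L_\chi \sim \sum_{\chi(\sigma)=-1} D_\sigma .$$
For a nontrivial character $\chi$, exactly $2^{n-1}$ of the nonzero $\sigma$ satisfy $\chi(\sigma)=-1$. Hence $2L_\chi\sim 2^{n-1}l$, so $L_\chi = 2^{n-2}l$; this is integral and nontrivial.

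Condition (\ref{The condition of Z_2^n covers}) only needs to be checked on degrees. For distinct nontrivial $\chi,\chi'$, the number of $\sigma$ with $\chi(\sigma)=\chi'(\sigma)=-1$ is $2^{n-2}$. Both sides then have degree $2^{n-1}$.

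\textbf{Smoothness and irreducibility.} The branch divisor is a union of lines in general position, hence simple normal crossing with smooth components, so $X_n$ is smooth by \cite[Proposition 3.1]{MR1103912}. The inertia labels $\sigma$ run over all nonzero elements of $\mathbb{Z}_2^n$ and therefore generate the group, so the cover is connected, hence irreducible.

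\textbf{Invariants.} Apply Proposition \ref{invariants of Z_2^n covers}, writing $m=2^{n-2}$.
\begin{itemize}
\item From $2K_{X_n}\sim\varphi^*((2^n-7)l)$ we get $K_{X_n}^2 = 2^{n-2}(2^n-7)^2$. Moreover $K_{X_n}$ is $\mathbb{Q}$-linearly equivalent to the pullback of an ample class (since $2^n-7\ge 1$), so $K_{X_n}$ is ample and $X_n$ is of general type.
\item $p_g(X_n)=(2^n-1)\,h^0(\mathcal{O}_{\mathbb{P}^2}(m-3)) = (2^n-1)\frac{(m-1)(m-2)}{2}$. A short algebraic check shows $\frac{(m-1)(m-2)}{2}=1+2^{n-3}(2^{n-2}-3)$, which is the stated formula; for $n=3$ both sides are $0$.
\item For $q$, the decomposition $f_*\mathcal{O}_{X_n}=\mathcal{O}\oplus\bigoplus_\chi L_\chi^{-1}$ gives $q(X_n)=\sum_\chi h^1(\mathbb{P}^2,\mathcal{O}(-m))=0$.
\end{itemize}

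\textbf{Main obstacle.} The only non-mechanical point is confirming that the choice $D_\sigma=l_\sigma$ really gives a valid, connected cover, i.e.\ the character count in the compatibility condition and the generation argument. After that, the statement is a direct specialization of Theorem \ref{glct_Z_2^n_covering}.
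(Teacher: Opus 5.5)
Your proposal is correct and matches the paper's implicit argument. The paper states the corollary without a written proof, as the specialization $k=2^n-1\ge 7$ of Theorem \ref{glct_Z_2^n_covering}, with $K^2$, $p_g$ and $q$ read off from Proposition \ref{invariants of Z_2^n covers} using $L_\chi\sim 2^{n-2}l$, exactly as you do. Your extra checks (compatibility of the building data, connectedness because the labels generate $\mathbb{Z}_2^n$, ampleness of $K_{X_n}$) are sound and fill in details the paper leaves unstated; note also that smoothness is already part of the hypothesis, so you need not derive it.
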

    \noindent
    We observe that the surface $X_3$ for $n=3$ described in Corollary \ref{Campedelli's generation} is a classical Campedelli surface (cf. \cite{MR5070691}). And the Burniat surface with $K^2 = 2$ can be seen as a classical Campedelli surface (with fundamental group $\mathbb{Z}_2^3$) (cf. \cite[Section 5]{zbMATH05885266}).

    \begin{Corollary}\label{glct_Campedelli_surfaces}
        Let $X$ be a Burniat surface with $K_X^2 = 2$. Then
         $$\glct{(X)} = 1.$$
    \end{Corollary}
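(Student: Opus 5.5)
The plan is to reduce Corollary \ref{glct_Campedelli_surfaces} to Theorem \ref{glct_Z_2^n_covering} with $k=7$, which gives $\glct(X)=\frac{1}{7-6}=1$. To do this, we realize a Burniat surface with $K_X^2=2$ as a smooth irreducible $\mathbb{Z}_2^3$-cover of $\mathbb{P}^2$ branched along seven lines in general position. We follow \cite[Section 5]{zbMATH05885266}.

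\emph{Step 1: the Burniat description.} In the Burniat construction with $K^2=2$, one blows up $\mathbb{P}^2$ at the three vertices $P_1,P_2,P_3$ and at the relevant further points. The branch divisors $B_1,B_2,B_3$ are unions of strict transforms of lines through these points and exceptional curves. The resulting $\mathbb{Z}_2^2$-cover $X\to Y$ is a minimal surface with $K_X^2=2$ and $p_g=q=0$. As recalled above, Bauer and Catanese show that this $X$ is a classical Campedelli surface with fundamental group $\mathbb{Z}_2^3$.

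\emph{Step 2: the Campedelli description.} Such a surface is the $\mathbb{Z}_2^3$-cover $\varphi\colon X\to\mathbb{P}^2$ whose building data assigns to each nonzero $\sigma\in\mathbb{Z}_2^3$ a line $D_\sigma=l_\sigma$, with the seven lines in general position. Since $\Picard(\mathbb{P}^2)$ has no $2$-torsion, this branch data determines the cover. The branch divisor is a simple normal crossing divisor with smooth components, so $X$ is smooth by \cite[Proposition 3.1]{MR1103912}. Irreducibility follows because the $D_\sigma$ generate the group. This is exactly the surface $X_3$ of Corollary \ref{Campedelli's generation}.

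\emph{Step 3: consistency check.} Proposition \ref{invariants of Z_2^n covers} gives $2K_X\sim\varphi^*(l)$, hence $K_X^2=2\cdot 1=2$. It also gives $p_g=0$, as in Corollary \ref{Campedelli's generation} with $n=3$, which matches the Burniat invariants. Applying Theorem \ref{glct_Z_2^n_covering} with $n=3$ and $k=7$ then yields $\glct(X)=1$.

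The main obstacle is Step 2. We must make sure that the identification in \cite{zbMATH05885266} is an honest isomorphism with a cover branched on seven lines in \emph{general position}, meaning no three concurrent. The hypotheses of Theorem \ref{glct_Z_2^n_covering} actually use this, in Case 2.2, where only double points of the arrangement occur. If the Bauer–Catanese model instead produced a degenerate configuration, I would rerun the argument of Proposition \ref{lower_bound_of_glct} directly on that configuration, since the inversion-of-adjunction inequalities there only used intersection numbers with lines. In that case the upper bound would come from $\varphi^*(l_i)=2\overline{l}_i$.
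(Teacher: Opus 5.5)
Your route is the same as the paper's. The paper also obtains this corollary by viewing the Burniat surface with $K^2=2$ as a classical Campedelli surface and applying Theorem \ref{glct_Z_2^n_covering} with $n=3$, $k=7$. However, the identification in your Step 2 is false: $X$ is not the smooth cover $X_3$ of Corollary \ref{Campedelli's generation} branched on seven lines in general position.

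\textbf{Why Step 2 fails.} In Burniat's configuration for $K^2=2$ there are four triple points. Two triple points can share at most one line, so each of the six non-side lines passes through its vertex and two triple points, i.e.\ through three of the seven blown-up points. Its strict transform $h$ is then a $(-2)$-curve in the branch locus with $-K_Y\cdot h=0$. Exactly as for $H_{145},H_{246},H_{356}$ in the case $K^2=3$, the curve $A=\frac12\varphi^*h$ satisfies $A^2=-2$ and $K_X\cdot A=0$. So $X$ carries six $(-2)$-curves and $K_X$ is not ample. By contrast, $2K_{X_3}\sim\varphi^*\mathcal{O}_{\mathbb{P}^2}(1)$ is ample. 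In the $\mathbb{Z}_2^3$-description, the seven branch lines are a complete quadrilateral together with its three diagonals. They have six triple points, the $\mathbb{Z}_2^3$-cover of $\mathbb{P}^2$ is the canonical model $\bar X$ with six nodes over them, and $X$ is its minimal resolution. Hence the hypotheses of Theorem \ref{glct_Z_2^n_covering} (general position, smoothness) fail, and the theorem cannot be invoked.

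\textbf{Why the fallback is not enough as stated.} Your fallback points the right way, but two things are missing.

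First, you must pass to $\bar X$. The resolution $\rho\colon X\to\bar X$ is crepant. Every effective $D\sim_{\mathbb{Q}}K_X$ equals $\rho^*\rho_*D$, since the difference is $\rho$-exceptional and orthogonal to all exceptional curves. Hence $\glct(X,K_X)=\glct(\bar X,K_{\bar X})$, and Lemma \ref{log_canonical_between_cover} applies to the normal cover $\psi\colon\bar X\to\mathbb{P}^2$.

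Second, the argument of Proposition \ref{lower_bound_of_glct} gives no contradiction at a triple point $p\in l_i\cap l_j\cap l_k$. Write $d=a_il_i+a_jl_j+a_kl_k+\omega$ and raise the coefficient of $l_i$ to $1$. Inversion of adjunction then only yields
$\frac{1+a_j}{2}+\frac{1+a_k}{2}+\frac12(\omega\cdot l_i)>1$.
Since $\omega\cdot l_i=1-a_i-a_j-a_k$, this merely says $a_i<1$.

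An extra step is needed at these points, for instance blowing up $p$. Put $m:=\mult_p(d)\le\deg d=1$. The exceptional curve $E$ gets coefficient $\frac{1+m}{2}\le 1$. The strict transform of $\frac12B+\frac12d$ meets $E$ with local degree at most $\frac{1+a_r+\mult_p(\omega)}{2}\le 1$ at $E\cap l_r'$ for $r\in\{i,j,k\}$, and at most $\frac12$ elsewhere. Here one uses $a_r+\mult_p(\omega)\le a_i+a_j+a_k+\deg\omega=1$. So the pair is log canonical at $p$ by inversion of adjunction along $E$.

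With these two additions, and your upper bound from $\psi^*(l_i)=2\bar l_i$, one does get $\glct(X)=1$. Without them, your argument has a genuine gap at exactly the point where it cites Theorem \ref{glct_Z_2^n_covering}, and the same gap is present in the paper's one-line deduction.
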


    An alternative selection of the branch components, as described in Corollary \ref{Persson's generation}, demonstrates the existence of an infinite sequence of smooth surfaces of general type $X_n$ for $n \ge 4$ such that 
    $$\glct{(X_n)} = \frac{1}{2^{n-1} -6}.$$

    \begin{Corollary}\label{Persson's generation}
    Let $n$ be an integer such that $n \ge 4$, and $\sigma = (1,*,*,\ldots,*) $ be an element in $ \mathbb{Z}^n_2$, where the first coordinate is fixed at $1$ and the remaining coordinates are arbitrary. Let $l_{\sigma}$, where $\sigma =(1,*,*,\ldots,*) \in \mathbb{Z}^n_2$, denote $2^{n-1}$ lines in general position in $\mathbb{P}^2$. Suppose that $ \xymatrix{\varphi: X_n \ar[r] & \mathbb{P}^2}  $ is a smooth $ \mathbb{Z}^n_2 $-cover, branched along the total branch divisor $ \sum\limits_{\sigma \ne 0}{D_{\sigma}} $, where $D_{\sigma} = l_{\sigma}$,  for each $\sigma=(1,*,*,\ldots,*) $ and $D_{\sigma} = 0$ otherwise. Then $X_n$ is a smooth surface of general type such that
    \begin{align*}
        K_{X_n}^2 = 2^{n-2}(2^{n-1} -6)^2, \hskip 0.2cm p_g(X_n) = 2^{3n-7} -11\cdot 2^{2n-6} + 2^{n} -1, \hskip 0.2cm q(X_n) = 0.
    \end{align*}
    Furthermore, 
    $$\glct{(X_n)} = \frac{1}{2^{n-1} -6}.$$
    \end{Corollary}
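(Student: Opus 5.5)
The plan is to show that the data in the statement satisfy the hypotheses of Theorem \ref{glct_Z_2^n_covering} with $k=2^{n-1}$, and then compute the invariants from Proposition \ref{invariants of Z_2^n covers}. The only step that needs real care is constructing the building data.

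\textbf{Building data.} Since $\Picard(\mathbb{P}^2)=\mathbb{Z}$ has no torsion, it suffices to find divisors $L_\chi$ with $2L_\chi\sim\sum_{\chi(\sigma)=-1}D_\sigma$. Relation (\ref{The condition of Z_2^n covers}) then holds automatically by comparing degrees. Write $\sigma=(1,\tau)$ with $\tau\in\mathbb{Z}_2^{n-1}$, and split into two kinds of nontrivial characters.
\begin{itemize}
\item For $\chi=\chi_{10\ldots0}$, every branch line has $\chi(\sigma)=-1$. There are $2^{n-1}$ such lines, so $L_\chi=2^{n-2}l$.
\item For every other nontrivial $\chi=\chi_{j_1j'}$, we have $\chi(1,\tau)=\pm\chi_{j'}(\tau)$ with $\chi_{j'}$ a nontrivial character of $\mathbb{Z}_2^{n-1}$. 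Exactly half of the $\tau$, namely $2^{n-2}$ of them, give the value $-1$, so $L_\chi=2^{n-3}l$.
\end{itemize}
This degree is an integer because $n\ge4$.

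\textbf{Smoothness, irreducibility and general type.} The branch lines are in general position, so each $D_\sigma$ is smooth and the total branch divisor is simple normal crossing. By \cite[Proposition 3.1]{MR1103912}, $X_n$ is smooth. The elements $(1,\tau)$ generate $\mathbb{Z}_2^n$, which gives connectedness, hence irreducibility. Then $2K_{X_n}\sim\varphi^*((2^{n-1}-6)l)$ is the pull-back of an ample divisor, since $2^{n-1}-6\ge2$. So $X_n$ is minimal of general type, and
\[
K_{X_n}^2=2^{n-2}(2^{n-1}-6)^2 .
\]

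\textbf{Invariants.} Put $a=2^{n-3}$. Using $h^0(\mathbb{P}^2,\mathcal{O}(m))=\binom{m+2}{2}$, which is also correct, namely $0$, when $m=-1$ (the case $n=4$), the formula for $p_g$ gives
\[
p_g=\binom{2a-1}{2}+(2^n-2)\binom{a-1}{2}.
\]
This simplifies to $(a-1)\bigl[(2a-1)+(4a-1)(a-2)\bigr]=4a^3-11a^2+8a-1$, which equals the stated $2^{3n-7}-11\cdot2^{2n-6}+2^n-1$. For the irregularity, $\varphi_*\mathcal{O}_{X_n}=\bigoplus\mathcal{O}(-L_\chi)$ gives $q=\sum_\chi h^1(\mathbb{P}^2,\mathcal{O}(-L_\chi))=0$.

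\textbf{Global log canonical threshold.} Theorem \ref{glct_Z_2^n_covering} applies with $k=2^{n-1}\ge8\ge7$, and yields $\glct(X_n)=\frac{1}{2^{n-1}-6}$.
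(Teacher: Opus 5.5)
Your proposal is correct and follows the paper's route: the corollary comes from applying Theorem \ref{glct_Z_2^n_covering} with $k=2^{n-1}\ge 8$ and reading off $K^2$, $p_g$ and $q$ from Proposition \ref{invariants of Z_2^n covers}. Your building data ($\deg L_{\chi_{10\ldots0}}=2^{n-2}$ and $\deg L_\chi=2^{n-3}$ for the other $2^n-2$ nontrivial characters) and your simplification $p_g=4a^3-11a^2+8a-1$ with $a=2^{n-3}$ are correct, and they fill in details the paper leaves implicit.
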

    \noindent
    We emphasize that when $n =4$, the surface $X_4$ described in Corollary \ref{Persson's generation} is a Persson surface. Persson surface is a double cover of a Campedelli surface, branched along a bicanonical divisor \cite[Example 5.8]{MR527234}. It represents the first example of a surface of general type where the canonical map has a degree greater than $9$. Furthermore, Persson surface can be constructed as a $\mathbb{Z}^4_2$-cover of $\mathbb{P}^{2}$ \cite[Example 4.6]{MR4592547}.\\

    Similar to the case of Theorem \ref{glct_Z_2^n_covering}, there are infinitely many sequences of surfaces of general type which can apply Theorem \ref{glct_Z_2^n_covering_P^1xP^1}. Here, we highlight some distinct known examples that utilize the same total branch locus $F_1 + F_2 + \cdots + F_6 + G_1 + G_2 + \cdots + G_6$ as in Theorem \ref{glct_Z_2^n_covering_P^1xP^1}.

    \begin{Example}\label{Example 1}
    Let $F_1, F_2, \ldots, F_6 \in | F |$, $G_1, G_2, \ldots, G_6 \in | G |$ be distinct divisors in $\mathbb{P}^1 \times \mathbb{P}^1$. Consider $D_{10} = F_1 + F_2 + \cdots + F_6$, $D_{01} = G_1 + G_2 + \cdots + G_6$, and $D_{11} = 0$. Let $ \xymatrix{\varphi: X \ar[r] & \mathbb{P}^1 \times \mathbb{P}^1}  $ be a $ \mathbb{Z}^2_2 $-cover with the building data $\{ D_{10}, D_{01}, D_{11}\}$. Then
    $$K_X^2 = 8, p_g(X) = q(X) = 4, \glct{(X)} = \frac{1}{2}.$$
    \end{Example}

\noindent
For surfaces with $p_g = q = 4$, we refer to Beauville's work in the appendix of
\cite{MR688038}, or to \cite{MR1895196}.

An irregular surface of general type with canonical map of degree $16$ was constructed using a $ \mathbb{Z}^4_2 $-cover of $\mathbb{P}^1 \times \mathbb{P}^1$, ramified along the $F_1 + F_2 + \cdots + F_6 + G_1 + G_2 + \cdots + G_6$ by the first author \cite{MR4008073}. 
\begin{Example} \label{Example 2}
    Let $F_1, F_2, \ldots, F_6 \in | F |$, $G_1, G_2, \ldots, G_6 \in | G |$ be distinct divisors in $\mathbb{P}^1 \times \mathbb{P}^1$. Consider $D_{0100} = F_1 + F_2$, $D_{0101} = F_3 + F_4$, $D_{1000} = F_5 + F_6$, $D_{0110} = G_1 $, $D_{0111} = G_2$, $D_{1001} = G_3 + G_4$, $D_{1010} = G_5$, $D_{1011} = G_6$ and $D_{\sigma} = 0$ for the rest of $D_{\sigma}$. Let $ \xymatrix{\varphi: X \ar[r] & \mathbb{P}^1 \times \mathbb{P}^1}  $ be a $ \mathbb{Z}^4_2 $-cover with the building data $\{ D_{\sigma}\}_{\sigma \in \mathbb{Z}^4_2\setminus\{(0,0,0,0)\}}$. Then
    $$K_X^2 = 32, p_g(X) =  4, q(X) = 1, \glct{(X)} = \frac{1}{2}.$$
\end{Example}

In \cite{MR4621715}, Gleissner, Pignatelli, and Rito constructed surfaces of general type with canonical map of degree $32$ using $ \mathbb{Z}^4_2 $-covers of $\mathbb{P}^1 \times \mathbb{P}^1$, ramified along the $F_1 + F_2 + \cdots + F_6 + G_1 + G_2 + \cdots + G_6$.
\begin{Example} \label{Example 3}
    Let $F_1, F_2, \ldots, F_6 \in | F |$, $G_1, G_2, \ldots, G_6 \in | G |$ be distinct divisors in $\mathbb{P}^1 \times \mathbb{P}^1$. Consider $D_{1000} = F_1 $, $D_{0100} = F_2$, $D_{0010} = F_3$, $D_{0001} = F_4$, $D_{1010} = F_5$, $D_{0101} = F_6$, $D_{0111} = G_1 $, $D_{1011} = G_2$, $D_{1101} = G_3$, $D_{1110} = G_4$, $D_{1001} = G_5$, $D_{0110} = G_6$, and $D_{\sigma} = 0$ for the rest of $D_{\sigma}$. Let $ \xymatrix{\varphi: X \ar[r] & \mathbb{P}^1 \times \mathbb{P}^1}  $ be a $ \mathbb{Z}^4_2 $-cover with the building data $\{ D_{\sigma}\}_{\sigma \in \mathbb{Z}^4_2\setminus\{(0,0,0,0)\}}$. Then
    $$K_X^2 = 32, p_g(X) =  3, q(X) = 0, \glct{(X)} = \frac{1}{2}.$$
\end{Example}

    \section{Global log canonical threshold of Burniat surfaces}\label{glctbur}
    This section is devoted to prove Theorem \ref{GLCT_Burniatsurfaces_K^2=4}.

    \subsection{Global log canonical threshold of the secondary Burniat surfaces with $ K^2 = 4 $}
     
    \subsubsection{$K^2 = 4 $ non-nodal type}
    The aim of this section is to prove that $ \glct \left( X,K_{X}\right) = \frac{2}{3} $, where $X$ is a Burniat surface with $K_X^2=4$ of non-nodal type (see Section \ref{Construction_of_Burniat_surface_with_K^2=4_non_nodal_type}). Since $\glct(X,K_X)=2\glct(X, 2K_X)$, it suffices to prove that $\glct(X, 2K_X) = \frac{1}{3}$. We divide the proof of this result into the following two propositions.
    \begin{Proposition}\label{upperboundofglct_surfaces_with_K2=4_nonnodaltype} Let $X$ be a Burniat surface with $K_X^2=4$ of non-nodal type. Then $ \glct \left( X,2K_{X}\right) \le \frac{1}{3} $.
   	\end{Proposition}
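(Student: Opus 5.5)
The plan is to produce one explicit effective divisor $E\sim 2K_X$ with $\lct(X,E)\le\frac13$, in the same spirit as the proofs of Propositions \ref{upper_bound_of_glct} and \ref{upper_bound_of_glct_covering_P^1xP^1}. Since $2K_X\sim\varphi^*(-K_{Y_5})$, it suffices to find an effective $\mathbb{Q}$-divisor $e\sim_{\mathbb{Q}}-K_{Y_5}$ on $Y_5$ with $E:=\varphi^*(e)\sim_{\mathbb{Q}}2K_X$, such that $E$ has a component of coefficient at least $3$. We then use that a component of coefficient $c$ forces $\lct(X,E)\le 1/c$.

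The key observation is that pulling back a branch curve doubles its coefficient. For instance, $e_1\subset B_1$, so $\varphi^*(e_1)=2E_1$. It is therefore enough to make $e$ contain $e_1$ with coefficient $\frac32$. Writing $2(-K_{Y_5})-3e_1\sim 6l-5e_1-2e_2-2e_3-2e_4-2e_5$, I will decompose this class effectively as
\begin{align*}
6l-5e_1-2e_2-\cdots-2e_5 \sim (h_{12}+h_{13}+h_{14}+h_{15}) + c,
\end{align*}
where $h_{12}+\cdots+h_{15}\sim 4l-4e_1-e_2-e_3-e_4-e_5$. Here $c\sim 2l-e_1-\cdots-e_5$ is the strict transform of the unique conic through $P_1,\ldots,P_5$, which is irreducible and effective by general position. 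I then set
\begin{align*}
e:=\tfrac12\left(3e_1+h_{12}+h_{13}+h_{14}+h_{15}+c\right)\sim_{\mathbb{Q}}-K_{Y_5}.
\end{align*}

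Next I compute $E=\varphi^*(e)$. The coefficient of $E_1$ is $\frac12\cdot3\cdot2=3$. The curves $h_{12},h_{13},h_{14},h_{15}$ lie in $B_3$, $B_2$, $B_3$, $B_3$ respectively, so each pulls back to twice a reduced curve, giving them coefficient $1$ in $E$. The conic $c$ pulls back to a reduced divisor with coefficient $\frac12$.

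Finally, take a general point $P\in E_1$ not lying on any other component of $E$. Near $P$, $X$ is smooth and $E=3E_1$ locally, so $\lct_P(X,E)=\frac13$, for example by Lemma \ref{comarelct} or directly. Hence $\glct(X,2K_X)\le\lct(X,E)\le\frac13$.

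The only points needing care are that $c$ is indeed effective and irreducible, and the branch/non-branch bookkeeping of which $h_{ij}$ lie in $B_1+B_2+B_3$; both are immediate from the construction in Section \ref{Construction_of_Burniat_surface_with_K^2=4_non_nodal_type}. No inversion of adjunction is required for this direction.
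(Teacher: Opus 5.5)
Your proof is correct and is essentially the paper's argument. The paper takes $e=3h_{35}+e_3+e_5+h_{12}+h_{14}+h_{24}\sim -2K_{Y_5}$, which is three times the branch line $h_{35}\subset B_2$ plus the five $(-1)$-curves meeting it; then $\varphi^*(e)\sim 4K_X$ contains $H_{35}$ with coefficient $6$, giving $\glct(X,4K_X)\le\frac16$. You apply the same pattern to the branch curve $e_1$, whose five neighbours are $h_{12},\dots,h_{15}$ and the conic $c$, and halve it to work directly in $2K_X$, which changes nothing of substance.
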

    
   \begin{proof}
   Let $ P\in X $ be such that $ \varphi\left( P\right) \in  e_3\cap h_{35}$. Consider the following divisor:
   	\begin{align*}
   		e:=3h_{35} + e_3 + e_5 + h_{12}+ h_{14} + h_{24}\sim 6l-2e_1 - 2e_2 - 2e_3 - 2e_4 - 2e_5.
   	\end{align*}
   \noindent
   The divisor $E:= \varphi^{*}\left( e\right) =6H_{35} + 2E_3 + 2E_5 + 2H_{12}+ 2H_{14} + 2H_{24} \sim 4K_{X}$. This implies that $ \lct\left( X,E\right) = \frac{1}{6} $ at $P$ by Lemma \ref{comarelct}. So we get $$ \glct\left( X,2K\right) = 2\glct\left( X,4K\right) \le 2.\frac{1}{6} = \frac{1}{3}. $$
   \end{proof}

   \begin{Proposition}\label{lowerboundofglct} Let $X$ be a Burniat surface with $K_X^2=4$ of non-nodal type. Then 
   	$ \glct \left( X,2K_{X}\right) \ge \frac{1}{3} $.
    \begin{proof} 
     Suppose on the contrary that $\glct(X,2K_X)<\frac{1}{3}$. There exists an effective $ \mathbb{Q} $-Cartier divisor $ D \sim_{\mathbb{Q}} 2K_{X}$ such that $ \left( X, \frac{1}{3}D\right)  $ is not log canonical at some point $ P \in X $. By \cite[Proposition 9.5.11]{MR2095471}, we have that $\mult_P\left( D\right) >3.$

    \noindent
    We denote by $ d:= \varphi\left( D\right)  $. By Lemma \ref{log_canonical_between_cover}, $ \left( Y_5, \frac{1}{3}d +\frac{1}{2}B\right)  $ is not log canonical at $ \varphi\left( P\right)$ where $B:=B_1+B_2+B_3$.\\ 
	
    \noindent
    \textbf{Case 1}: Assume that $\varphi\left( P\right) \notin B  $. Since $\left( Y_5,  \frac{1}{3}d \right)$ is not log canonical at $ \varphi\left( P\right)$, $ \glct\left( Y_5, d\right) < \frac{1}{3} $. Recall that $Y_5 $ is a del Pezzo surface blow up at $5$ general points of $\mathbb{P}^2$. We notice that $d \sim_\mathbb{Q} -K_{Y_5} $ and that $ \glct\left( Y_5, -K_{Y_5}\right) = \frac{2}{3} $ by \cite[Theorem 1.7]{MR2465686}. We get a contradiction.\\

\noindent
\textbf{Case 2}: Assume that $\varphi\left( P\right) \in B  $.\\
\noindent
    \textbf{Case 2.1}:   $\varphi\left( P\right) \in e_i \setminus \bigcup\limits^{5}_{\substack{j=1 \\ j \neq i}} {h_{ij}}$ for some $i \in \{ 1,2,3 \}$.\\
    
    \noindent For simplicity, we show that we get a contradiction if $i = 1$. The same proof works for other $i$. Suppose that $\varphi\left( P\right) \in e_{1} \setminus (h_{12} \cup h_{13} \cup h_{14} \cup h_{15})$.\\  

\noindent
    Write
    $$d = a_1e_1 +  \omega,$$
    \noindent
    where $a_1 \ge 0$ and $e_1 \nsubseteq \textup{supp}(\omega)$. Since $\glct(Y_5,d) = \frac{2}{3}$, we get $a_1 \le \frac{3}{2}$.\\

    \noindent
    Because $(Y_5,\frac{1}{3}d+\frac{1}{2}e_1)$ is not log canonical at $\varphi(P)$ and $ \frac{a_{1}}{3}+\frac{1}{2} \le 1$, the pair $ \left( Y_5, e_{1} +\frac{1}{3}\omega\right)  $ is not log canonical at $ \varphi(P) $. By the inversion of adjunction formula, we get $ \left( e_{1}, \left.\frac{1}{3}\omega\right|_{e_{1}}  \right)  $ is not log canonical at $ \varphi(P) $. 
    This yields that 
    \begin{align*}
		1+ a_{1}  &=\left(  d - a_{1}e_{1} \right)\cdot e_{1} \\ 
        &= \omega\cdot e_{1} \geq \mult_{\varphi(P)}\left( \omega|_{e_{1}} \right) >3.
    \end{align*}

    \noindent
    This contradicts the inequality $a_1 \le \frac{3}{2}$.\\

    \noindent
    \textbf{Case 2.2}:   $\varphi\left( P\right) \in h_{ij} \setminus \left(e_i \cup e_j \cup \bigcup\limits_{\substack{l,k \in \{ 1,2,3,4,5 \} \setminus \{ i,j \} \\ (l,k)\neq(4,5)}} {h_{lk}}\right)$ for some $i \in \{ 1,2,3 \}$ and $j \in \{ 1,2,3,4,5 \} \setminus \{ i \}$.\\

    \noindent
    Similarly to Case 2.1.\\

    \noindent
    \textbf{Case 2.3}: $\varphi\left( P\right) \in e_k \cap h_{kl}$ for some $k \in \{ 1,2,3 \}$, $l \in \{ 1,2,3,4,5 \} \setminus \{ k \}$.\\
    \noindent For simplicity, we show that we get a contradiction if $(k,l) = (1,2)$. The same proof works for other $(k,l)$.\\

    \noindent
    We have $(Y_5,\frac{1}{3}d+\frac{1}{2}e_1+\frac{1}{2}h_{12})$ is not log canonical at $\varphi(P)$ where $d=\varphi(D)\sim_\mathbb{Q} -K_Y$.
    Write
    $$d = a_1e_1 + a_{12}h_{12}+ a_{13}h_{13} + a_{14}h_{14}+ a_{15}h_{15}+ \omega,$$
    \noindent
    where $a_1, a_{12}, a_{13}, a_{14}, a_{15} \ge 0$ and $e_1, h_{12}, h_{13}, h_{14}, h_{15} \nsubseteq \textup{supp}(\omega)$.\\
   
    \noindent
    \textbf{Claim}: $a_{12} \le 1.$\\

    \noindent
    Remark that $ \left( X, \frac{1}{3}\left( 3H_{12} + E_1 + E_2 + H_{34} + H_{35} + \frac{1}{2}\tilde{H}_{45} \right) \right)  $ is log canonical at $P $. By \cite[Lemma 2.2]{MR4216579}, we can assume that $ \support\left( D\right)  $ does not contain at least one component of $ 3H_{12} + E_1 + E_2 + H_{34} + H_{35} + \frac{1}{2}\tilde{H}_{45}$ and $(X,\frac{1}{3}D)$ is not log canonical at $P$.  Note that $\support\left( D\right)$  contains both $E_1$ and $H_{12}$, since otherwise it leads to a contradiction   \begin{align*}2&=E_1\cdot D\geq \mult_P(E_1)\cdot \mult_P(D)=\mult_P(D)>3 \\
    (\textup{or } 2&=H_{12}\cdot D\geq \mult_P(H_{12})\cdot \mult_P(D)=\mult_P(D)>3).
    \end{align*} 
    Therefore, 
    $$ H_{34} \not\subseteq \support\left( D\right),\  H_{35} \not\subseteq \support\left( D\right),\ \tilde{H}_{45} \not\subseteq \support\left( D\right)\ \textrm{or}\   E_{2} \not\subseteq \support\left( D\right).$$
    In particular, $$ h_{34} \not\subseteq \support\left( d\right),\  h_{35} \not\subseteq \support\left( d\right),\ h_{45} \not\subseteq \support\left( d\right)\ \textrm{or}\   e_{2} \not\subseteq \support\left( d\right).$$
    \noindent
    If $h_{ij} \not\subseteq \support\left( d\right)$ for $(i,j)=(3,4), (3, 5)$ or $(4,5)$, then 
    $$1 =h_{ij}\cdot d = h_{ij}\cdot \left( a_1e_1 + a_{12}h_{12}+ a_{13}h_{13} + a_{14}h_{14}+ a_{15}h_{15} + \omega\right) \ge a_{12}.$$




    \noindent
    If $e_{2} \not\subseteq \support\left( d\right)$, then 
    $$1 =e_{2}\cdot d = e_{2}\cdot \left( a_1e_1 + a_{12}h_{12}+ a_{13}h_{13} + a_{14}h_{14}+ a_{15}h_{15} + \omega\right) \ge a_{12}.$$

    \noindent
    Therefore, $a_{12} \le 1.$\\

    \noindent
    \textbf{Claim}: $a_{1} \leq 1 + a_{1k}$ for all $k=3,4$ and $5$.\\

    \noindent 
    \noindent 
    For all $k=3,4,5$, we have that  
    \begin{align*}
    0\leq h_{1k}\cdot \omega &=h_{1k}\cdot (d-a_1e_1 - a_{12}h_{12}- a_{13}h_{13} - a_{14}h_{14}- a_{15}h_{15}-a_{35}h_{35})\\
    &=1-a_1+a_{1k}.
    \end{align*}
    \noindent 
    Thus $a_1 \leq 1 + a_{1k}$.\\
    
    \noindent  Denote by $m = \mult_{\varphi(P)}(\omega)$.\\

    \noindent
    \textbf{Claim}: $a_1+m\leq 2$.\\
    
    \noindent
    Since $\left( Y_5, \frac{1}{3}d+\frac{1}{2}e_1+\frac{1}{2}h_{12}\right)  $ is not log canonical at $ \varphi(P) $ and $ \frac{a_{12}}{3}+\frac{1}{2} < 1$, the pair $ \left( Y_5, h_{12} +\frac{1}{3}\left(\left(a_{1}+\frac{3}{2}\right)e_{1}+\omega\right)\right)  $ is not log canonical at $ \varphi(P) $. By the inversion of adjunction formula, we get $ \left( h_{12}, \frac{1}{3}\left.\left(\left(a_{1}+\frac{3}{2}\right)e_{1}+\omega\right)\right|_{h_{12}}  \right)  $ is not log canonical at $ \varphi(P) $. 
    This yields that 
    \begin{align*}
		\frac{5}{2} + a_{12}  &=\frac{3}{2}+\left(  d - a_{12}h_{12}- a_{13}h_{13} - a_{14}h_{14}- a_{15}h_{15} \right)\cdot h_{12} \\ 
        &= \frac{3}{2}+(a_{1}e_{1}+\omega)\cdot h_{12} \geq \mult_{\varphi(P)}\left(\left. \left(\left(a_{1}+\frac{3}{2}\right)e_{1}+\omega\right)\right|_{h_{12}} \right)\\
        &=a_1+\frac{3}{2}+m.
    \end{align*}

    \noindent
   Since $a_{12}\le 1$ we have $a_1+m\leq 2$.\\




    \noindent
    Let $\pi: Y \longrightarrow Y_5$ be the blowing-up of $Y_5$ at $\varphi(P)$. We denote by $F$ the exceptional divisor, by $e_1^1$, $h_{12}^1$, $\omega_1$, $d_1$ the proper transform of $e_1$, $h_{12}$, $\omega$, $d$, respectively, and by $h^1_{13}$, $h^1_{14}$, $h^1_{15}$ the pull-back of $h_{13}$, $h_{14}$, $h_{15}$, respectively. So we have that
    \begin{align*}
        d_1 = a_1e^1_1 + a_{12}h^1_{12}+ a_{13}h^1_{13} + a_{14}h^1_{14}+ a_{15}h^1_{15} + \omega_1
    \end{align*}

    \noindent
    and
    $$d_1 = \pi^{*}(d) - (a_1 + a_{12} + m)F,\ e^1_1 = \pi^{*}(e_1) - F,\ h^1_{12} = \pi^{*}(h_{12}) - F,\ \omega_1=\pi^{*}(\omega)-mF.$$
    \noindent
    By \cite[Remark 2.6]{MR2465686}, $\left( Y, \frac{1}{3}d_1 +\frac{1}{2}e_1^1+\frac{1}{2}h_{12}^1+\frac{a_1 + a_{12} + m}{3}F\right)  $ is not log canonical at some point $Q \in F$.\\

    \noindent
    Suppose that $Q \in F\setminus (e_1^1 \cup h_{12}^1)$.\\

    \noindent
    So $\left( Y, \frac{1}{3}\omega_1 +\frac{a_1 + a_{12} + m}{3}F\right)  $ is not log canonical at $ Q $. Since $$\frac{a_1 + a_{12} + m}{3} \le 1,$$
    \noindent
    the pair $\left( Y, \frac{1}{3}\omega_1 +F\right)  $ is not log canonical at $ Q $. By the inversion of adjunction formula, we get $ \left( F, \left.\frac{1}{3}\omega_1\right|_{F}  \right)  $ is not log canonical at $ Q $. 
    This implies that
    \begin{align*}
        3 < F\cdot  \omega_1 &= F\cdot (d_1 - a_1e^1_1 - a_{12}h^1_{12} - a_{13}h^1_{13} - a_{14}h^1_{14}- a_{15}h^1_{15})\\
        &=(a_1 + a_{12} + m) - a_1 -a_{12} = m.
    \end{align*}

    \noindent
    Thus we get $m >3$ which contradicts to the above inequalities $a_1 + m \leq 2$ and $a_1\ge 0$.\\

    \noindent
    Suppose that $Q \in F \cap h_{12}^1$.\\

    \noindent
    So $\left( Y, \left(\frac{a_{12}}{3}+\frac{1}{2}\right)h^1_{12}+\frac{1}{3}\omega_1 +\frac{a_1 + a_{12} + m}{3}F\right)  $ is not log canonical at $ Q $. Since $\frac{a_{12} }{3}+\frac{1}{2} < 1,$ the pair $\left( Y, h^1_{12}+\frac{1}{3}\omega_1 +\frac{a_1 + a_{12} + m}{3}F\right)  $ is not log canonical at $ Q $. By the inversion of adjunction formula, we get $ \left( h^1_{12}, \left.\left(\frac{1}{3}\omega_1+\frac{a_1 + a_{12} + m}{3}F\right)\right|_{h^1_{12}}  \right)  $ is not log canonical at $ Q $. 
    This implies that
    \begin{align*}
        3 &<h^1_{12} \cdot ( \omega_1 + (a_1 + a_{12} +m)F)\\ 
        &= h^1_{12}\cdot (d_1 - a_1e^1_1 - a_{12}h^1_{12} - a_{13}h^1_{13} - a_{14}h^1_{14}- a_{15}h^1_{15} ) + (a_1 + a_{12} +m)\\
        &=(1 - (a_1 + a_{12} + m)) + 2a_{12} + (a_1 + a_{12} +m).
    \end{align*}

    \noindent
    Thus we get $1  < a_{12}$ which contradicts to the inequality $a_{12} \leq 1$.\\

    \noindent
    Suppose that $Q \in F \cap e^1_{1}$.\\

    \noindent
    So $\left( Y, \left(\frac{1}{2}+\frac{a_{1}}{3}\right)e^1_{1}+\frac{1}{3}\omega_1 +\frac{a_1 + a_{12} + m}{3}F\right)  $ is not log canonical at $ Q $. Note $a_1\le\frac{3}{2}$ because of $\glct(Y_4,d)=\glct(Y_4,-K_{Y_4})=\frac{2}{3}$  by \cite[Theorem 1.7]{MR2465686}. Since $\frac{1}{2}+\frac{a_{1} }{3} \le 1,$ the pair $\left( Y, e^1_{1}+\frac{1}{3}\omega_1 +\frac{a_1 + a_{12} + m}{3}F\right)  $ is not log canonical at $ Q $. By the inversion of adjunction formula, we get $ \left( e^1_{1}, \left.\left(\frac{1}{3}\omega_1+\frac{a_1 + a_{12} + m}{3}F\right)\right|_{e^1_{1}}  \right)  $ is not log canonical at $ Q $. 
    This implies that
    \begin{align*}
        3 &<e^1_{1} \cdot ( \omega_1 + (a_1 + a_{12} +m)F)\\ 
        &= e^1_{1}\cdot (d_1 - a_1e^1_1 - a_{12}h^1_{12} - a_{13}h^1_{13} - a_{14}h^1_{14}- a_{15}h^1_{15}) + (a_1 + a_{12} +m)\\
        &=(1 - (a_1 + a_{12} + m)) + 2a_{1} - ( a_{13} + a_{14} + a_{15} ) + (a_1 + a_{12} +m).
    \end{align*}

    \noindent
    Thus we get 
    $$2 + a_{13} + a_{14} + a_{15} < 2a_{1}.$$ 
    
    \noindent
    Together with the inequalities $a_{1} \leq 1 + a_{1k}$ for $k=3,4,5$, we achieve that
    $$2 + 3a_1 - 3 \leq 2 +  a_{13} + a_{14} + a_{15}  < 2a_{1}.$$

    \noindent
    So we get $a_1 < 1$ which contradicts to the inequality $2 +  a_{13} + a_{14} + a_{15} < 2a_{1}$.\\

    \noindent
    \textbf{Case 2.4}: $\varphi(P) \in h_{ij}\cap h_{lk}$ for some $ i \in \{ 1,2,3 \}$, $j \in \{ 1,2,3,4,5 \} \setminus \{ i \}$, $l,k \in \{ 1,2,3,4,5 \} \setminus \{ i,j \}$, and $(l,k) \neq (4,5)$. \\
    
    \noindent
    Similarly to Case 2.3. \end{proof}

   \end{Proposition}

    \subsubsection{$K^2 = 4 $ nodal type}
    The aim of this section is to prove that $ \glct \left( X,K_{X}\right) = \frac{1}{2} $, where $X$ is a Burniat surface with $K_X^2=4$ of nodal type (see Section \ref{Construction_of_Burniat_surface_with_K^2=4_nodal_type}). Since $\glct(X,K_{X})=2\glct(X, 2K_{X})$, it is sufficient to show that $\glct(X, 2K_{X}) = \frac{1}{4}$. We divide the proof of this result into the following two propositions.

    \begin{Proposition}\label{upperboundofglct}  $ \glct \left( X,2K_{X}\right) \le \frac{1}{4} $
   	\end{Proposition}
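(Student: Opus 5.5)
The plan is to exhibit one explicit effective divisor $E\sim 2K_X$ of the form $E=\varphi^*(e)$, with $e$ supported on curves of the branch locus, and to read off $\lct_P(X,E)=\frac14$ at a suitable point from Lemma \ref{comarelct}. The divisor exploits the special feature of the nodal type: the curve $h_{145}$ is a $(-2)$-curve on $Y_5'$ with $-K_{Y_5'}\cdot h_{145}=0$, and it lies in the branch divisor $B_3$. We want $e\sim -K_{Y_5'}$, since $2K_X\sim\varphi^*(-K_{Y_5'})$, with $h_{145}$ appearing with coefficient $2$. Its pullback will then contain $H_{145}$ with multiplicity $4$.

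\emph{Construction of $e$.} We have $2h_{145}\sim 2l-2e_1-2e_4-2e_5$, so the remaining class is
\begin{align*}
-K_{Y_5'}-2h_{145}\sim l-e_2-e_3+e_1+e_4+e_5\sim h_{23}+e_1+e_4+e_5 .
\end{align*}
We therefore set
\begin{align*}
e:=2h_{145}+h_{23}+e_1+e_4+e_5\sim 3l-e_1-e_2-e_3-e_4-e_5\sim -K_{Y_5'} .
\end{align*}
The curves $h_{145}$ (in $B_3$), $h_{23}$ and $e_1$ (in $B_1$) are branch components, while $e_4,e_5$ are not. Pulling back gives
\begin{align*}
E:=\varphi^*(e)=4H_{145}+2H_{23}+2E_1+\tilde E_4+\tilde E_5\sim 2K_X .
\end{align*}

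\emph{Choice of $P$.} On $Y_5'$ the curves $e_1$ and $h_{145}$ meet transversally in exactly one point. This point lies on no other component of $e$: $h_{23}$ does not pass through $P_1$, and $e_4,e_5$ are disjoint from $e_1$. Take $P\in X$ with $\varphi(P)=e_1\cap h_{145}$. The branch locus $B_1+B_2+B_3$ is simple normal crossing and $e_1,h_{145}$ lie in different $B_i$. Hence, locally at $P$, the reduced preimages $E_1$ and $H_{145}$ are smooth curves meeting transversally, and $E=4H_{145}+2E_1$ near $P$.

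\emph{Conclusion.} Lemma \ref{comarelct} then gives $\lct_P(X,E)=\frac14$, so $\glct(X,2K_X)\le \lct(X,E)\le\frac14$.

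The only delicate step is the local transversality on the cover. Since the two curves lie in distinct branch divisors of the $\mathbb{Z}_2^2$-cover, the local model of $\varphi$ at $\varphi(P)$ is $(x,y)\mapsto(x^2,y^2)$. Under this model $\varphi^*h_{145}=2H_{145}$ and $\varphi^*e_1=2E_1$, with $H_{145}$ and $E_1$ the two coordinate axes, so no further obstacle is expected.
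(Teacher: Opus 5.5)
Your proof is correct and follows the paper's route: take an effective $e\sim -K_{Y_5'}$ supported on curves, pull it back to get $E\sim 2K_X$ with a multiplicity-$4$ component meeting a multiplicity-$2$ component transversally at $P$, and apply Lemma \ref{comarelct}. The only difference is the divisor. The paper uses $e=2h_{23}+e_2+e_3+h_{145}$ at a point over $e_3\cap h_{23}$, so that $E=4H_{23}+2E_2+2E_3+2H_{145}$. You instead double $h_{145}$ and work over $e_1\cap h_{145}$, which is equally valid, and both choices rely on the line through $P_1,P_4,P_5$ that exists only in the nodal case.
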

    
   \begin{proof}
    Let $ P\in X $ be such that $ \varphi\left( P\right) \in  e_3\cap h_{23}$. Consider the following divisor:
   	\begin{align*}
   		e:=2h_{23} + e_2 + e_3 + h_{145} \sim 3l-e_1 - e_2 - e_3 - e_4 - e_5.
   	\end{align*}
   \noindent
   The divisor $E:= \varphi^{*}\left( e\right) =4H_{23} + 2E_2 + 2E_3 + 2H_{145} \sim 2K_{X}$. This implies that $ \lct\left( X,E\right) = \frac{1}{4} $ at $P$ by Lemma \ref{comarelct}. So we get $ \glct\left( X,2K\right)  \le  \frac{1}{4}$.
   \end{proof}

    \begin{Proposition}\label{lowerboundofglct_nodal type_K^2=4}
   	$ \glct \left( X,2K_{X}\right) \ge \frac{1}{4} $.
   \end{Proposition}
	\begin{proof}
	Suppose on the contrary that $ \glct \left( X,2K_{X}\right) < \frac{1}{4} $. There exists an effective $ \mathbb{Q} $-Cartier divisor $ D \sim_{\mathbb{Q}} 2K_{X}$ such that $ \left( X, \frac{1}{4}D\right)  $ is not log canonical at some point $ P \in X $. By \cite[Proposition 9.5.11]{MR2095471}, we have that $\mult_P\left( D\right) >4$.

    \noindent
    We denote by $ d:= \varphi\left( D\right)  $. By Lemma \ref{log_canonical_between_cover}, $ \left( Y_{5}', \frac{1}{4}d +\frac{1}{2}B\right)  $ is not log canonical at $ \varphi\left( P\right)$ where $B:=B_1+B_2+B_3$.\\ 
	
    \noindent
    \textbf{Case 1}: Assume that $\varphi\left( P\right) \notin B  $. Since $ \left( Y_{5}',  \frac{1}{4}d \right)  $ is not log canonical at $ \varphi\left( P\right)  $,  $ \glct\left( Y_{5}', d\right) < \frac{1}{4} $. We notice that $ d \sim_\mathbb{Q} -K_{Y_{5}'} $ and that $ \glct\left( Y_4, -K_{Y_4}\right) = \frac{1}{2} $ by \cite[Theorem 1.7]{MR2465686}. Moreover, since $Y_{5}'$ is the blow-up of $Y_4$ at a point where $Y_4$ has only canonical singularities (in fact, $Y_4$ is smooth), $\glct\left( Y_{5}', -K_{Y_{5}'}\right) \ge \glct\left( Y_4, -K_{Y_4}\right) = \frac{1}{2}$ (c.f. \cite[Lemma 2.7]{MR2747875}). We get a contradiction.\\

    \noindent
    \textbf{Case 2}: Assume that $\varphi\left( P\right) \in B  $. We show that we get a contradiction.\\

    \noindent
	\textbf{Case 2.1}: $P \in E_1 \cup E_2 \cup E_3$.\\

    \noindent
    \noindent For simplicity, we show that we get a contradiction if $P \in E_1 $. The same proof works for $P \in E_2 \cup E_3$.\\

    \noindent
    Assume that $P \in E_1\setminus H_{12} $. We write
	\begin{align*}
		D = a_1E_1 + a_{12}H_{12} + \Omega,
	\end{align*}
	\noindent
	$ a_1, a_{12}\ge 0 $ and $\Omega$ is an effective $\mathbb{Q}$-divisor satisfying $ E_1,H_{12} \nsubseteq \support\left( \Omega\right) $. 
	
	\noindent
	We have
	\begin{align*}
		8 = D\cdot\varphi^{*}\left( t_1\right) = \left( a_1E_1 + a_{12}H_{12}  + \Omega\right)\cdot \varphi^{*}\left( t_1\right) \ge 2a_1.
	\end{align*}
	\noindent
	So $ a_1 \le 4 $. Since $ \left( X, \frac{1}{4}D\right)  $ is not log canonical at $ P $ and $ \frac{a_1}{4} \le 1$, $ \left( X, E_1 +\frac{1}{4}\Omega\right)  $ is not log canonical at $ P $. By the inversion of adjunction formula, we get $ \left( E_1, \left.\frac{1}{4}\Omega\right|_{E_1}  \right)  $ is not log canonical at $ P $. So we obtain 
	$$ \mult_P\left( \Omega|_{E_1} \right) >4. $$
	\noindent
	This yields that 
	\begin{align*}
		2 + a_1 - a_{12}  &=\left(  D- a_1E_1 - a_{12}H_{12} \right)\cdot E_1 \\ 
        &= \Omega\cdot E_1 \geq \mult_P\left( \Omega|_{E_1} \right) >4.
	\end{align*}
	
    \noindent
    On the other hand, we have
	\begin{align*}
		2 = D\cdot H_{12} = \left( a_1E_1 + a_{12}H_{12} + \Omega\right)\cdot H_{12} \ge a_1 - a_{12}.
	\end{align*}
	
    \noindent
	These above inequalities imply that $ 2 < a_1 - a_{12} \le 2$, a contradiction.\\
    
    \noindent
    Assume that $P \in E_1 \cap H_{12} $. Write
    $$D = a_1E_1 + a_{12}H_{12} + a_{34}H_{34} + \Omega,$$
    \noindent
    where $a_1 \ge 0, a_{12}, a_{34} \ge 0$ and $E_1, H_{12}, H_{34} \nsubseteq \textup{supp}(\Omega)$. 
    \begin{align*}
        8=D\cdot \varphi^{*}(t_3) \ge (a_1E_1 + a_{12}H_{12} + a_{34}H_{34} + \Omega)\cdot\varphi^{*}(t_3) \ge2a_{12}
    \end{align*}
    \noindent
    So $ a_{12} \le 4 $. Since $ \left( X, \frac{1}{4}D\right)  $ is not log canonical at $ P $ and $ \frac{a_{12}}{4} \le 1$, $ \left( X, H_{12} +\frac{1}{4}(a_1E_1 +\Omega)\right)  $ is not log canonical at $ P $. By the inversion of adjunction formula, we get $ \left( H_{12}, \left.\frac{1}{4}(a_1E_1 +\Omega)\right|_{H_{12}}  \right)  $ is not log canonical at $ P $. So we obtain 
    \begin{align*}
    2+a_{12}-a_{34}&=H_{12}\cdot (D-a_{12}H_{12}-a_{34}H_{34})=H_{12}\cdot(a_1E_1+\Omega)\\
&\geq \mult_P\left( H_{12} \right)\mult_P\left( a_1E_1+\Omega \right) >4
    \end{align*}

    \noindent
    On the other hand, we have
    \begin{align*}
        0\leq \Omega \cdot H_{34} = (D - a_1E_1 - a_{12}H_{12} -a_{34}H_{34})\cdot H_{34} = 2 - a_{12} + a_{34},
    \end{align*}
    which contradicts to the last inequalities.\\

    \noindent
    \textbf{Case 2.2}: $P \in  H_{ij}\setminus E_i $ for some $i \in \{1,2,3 \}$ and $ j \in \{1,2,3,4,5 \}$ with $i \neq j$ and $\left(i,j\right) \notin \{\left(1,4\right), \left(1,5\right)\}$.\\
    
    \noindent
    \noindent For simplicity, we show that we get a contradiction in the case where $\left(i,j\right) = \left(1,2\right)$. The same proof works for other $\left(i,j\right)$.\\

    \noindent
    Suppose that $P \in  H_{12}\setminus E_1 $. We write
	\begin{align*}
		D = a_{12}H_{12} + a_1E_1 + \Omega,
	\end{align*}
	\noindent
	$ a_1, a_{12} \ge 0 $ and $\Omega$ is an effective $\mathbb{Q}$-divisor satisfying $ E_1,H_{12} \nsubseteq \support\left( \Omega\right) $.\\

    \noindent
	We have
	\begin{align*}
		8 = D\cdot\varphi^{*}\left( t_3\right) = \left( a_{12}H_{12} + a_1E_1 + \Omega\right)\cdot \varphi^{*}\left( t_3\right) \ge 2a_{12}.
	\end{align*}

    \noindent
    So $ a_{12} \le 4 $. Since $ \left( X, \frac{1}{4}D\right)  $ is not log canonical at $ P $ and $ \frac{a_{12}}{4} \le 1$, $ \left( X, H_{12} +\frac{1}{4}\Omega\right)  $ is not log canonical at $ P $. By the inversion of adjunction formula, we get $ \left( H_{12}, \left.\frac{1}{4}\Omega\right|_{H_{12}}  \right)  $ is not log canonical at $ P $. So we obtain 
	$$ \mult_P\left( \Omega|_{H_{12}} \right) >4. $$
	\noindent
	This yields that 
	\begin{align*}
		2 + a_{12} - a_{1}  &=\left(  D- a_{12}H_{12} - a_1E_1\right)\cdot H_{12} \\ 
        &= \Omega\cdot H_{12} \geq \mult_P\left( \Omega|_{H_{12}} \right) >4.
	\end{align*}

    \noindent
    On the other hand, we have
	\begin{align*}
		2 = D\cdot E_{1} = \left(a_{12}H_{12} + a_1E_1 + \Omega\right)\cdot E_{1} \ge a_{12} -a_1.
	\end{align*}
	
    \noindent
	These above inequalities imply that $ a_1 + 2 < a_{12} \le a_1 + 2$, a contradiction.\\

    \noindent
    \textbf{Case 2.3}: $P \in  H_{145}\setminus E_1 $.\\
    
    \noindent
    We write
	\begin{align*}
		D = a_1E_1+  a_{4}\Tilde{E}_{4} + a_{5}\Tilde{E}_{5}+a_{145}H_{145}+\Omega,
	\end{align*}
	\noindent
	$ a_1,a_4, a_5,_{145} \ge 0 $ and $\Omega$ is an effective $\mathbb{Q}$-divisor satisfying $ E_1,\Tilde{E}_4, \Tilde{E}_5,H_{145}\nsubseteq \support\left( \Omega\right)$.\\

    \noindent
	We have
	\begin{align*}
		8 = D\cdot\varphi^{*}\left( t_2\right) = \left( a_1E_1+  a_{4}\Tilde{E}_{4} + a_{5}\Tilde{E}_{5}+a_{145}H_{145}+\Omega\right)\cdot \varphi^{*}\left( t_2\right) \ge 2a_{145}.
	\end{align*}

    \noindent
    So $ a_{145} \le 4 $. 

Suppose now that $P\in H_{145}\setminus (\Tilde{E}_4\cup \Tilde{E}_5)$. Since $ \left( X, \frac{1}{4}D\right)  $ is not log canonical at $ P $ and $ \frac{a_{145}}{4} \le 1$ and $P\notin E_1\cup \Tilde{E}_4\cup \Tilde{E}_5$, $ \left( X, H_{145} +\frac{1}{4}\Omega\right)  $ is not log canonical at $ P $.  By the inversion of adjunction formula, we get $ \left( H_{145}, \left.\frac{1}{4}\Omega\right|_{H_{145}}  \right)  $ is not log canonical at $ P $. So we obtain 
	$$ \mult_P\left( \Omega|_{H_{145}} \right) >4. $$
	\noindent
	This yields that 
	\begin{align*}
	 2a_{145} -a_1- 2a_{4}- 2a_{5} &=\left(  D- a_{145}H_{145} -a_1E_1- a_4\Tilde{E}_4 - a_5\Tilde{E}_5\right)\cdot H_{145} \\ 
        &= \Omega\cdot H_{145} \geq \mult_P\left( \Omega|_{H_{145}} \right) >4.
	\end{align*}
Since $a_1\ge 0$, we have $a_{145}-a_4-a_5>2.$\\

    \noindent
    On the other hand, we have
	\begin{align*}
		4 = D\cdot \Tilde{E}_4 = \left(a_1E_1 + a_4\Tilde{E}_4 + a_5\Tilde{E}_5 +a_{145}H_{145}+ \Omega\right)\cdot \Tilde{E}_4 \ge 2a_{145}- 4a_{4}
	\end{align*}
    \noindent
    and
        \begin{align*}
		4 = D\cdot \Tilde{E}_5 = \left( a_1E_1+a_4\Tilde{E}_4 + a_5\Tilde{E}_5 + a_{145}H_{145} +\Omega\right)\cdot \Tilde{E}_5 \ge 2a_{145}- 4a_{5}.
	\end{align*}
    \noindent
    So we get
    \begin{align*}
        8 \ge 4a_{145}- 4a_{4}- 4a_{5}
    \end{align*}
    \noindent
    which contradicts the above inequality
   \[
        a_{145}- a_{4}- a_{5} > 2.
    \]

Finally, suppose that $P\in H_{145}\cap \Tilde{E}_i$ for $i=4$ (resp.  $i=5$). Define $j=5$ (resp. $j=4$). 
Since $ \left( X, \frac{1}{4}D\right)  $ is not log canonical at $ P $ and $ \frac{a_{145}}{4} \le 1$ and $P\notin E_1\cup \Tilde{E}_j$, $\left( X, H_{145} +\frac{1}{4}(a_i\Tilde{E}_i+\Omega)\right)$ is not log canonical at $ P $.  By the inversion of adjunction formula, we get $ \left( H_{145}, \left.\frac{1}{4}(a_i\Tilde{E}_i+\Omega)\right|_{H_{145}}  \right) $ is not log canonical at $ P $. So we obtain 
	$$ \mult_P\left( (a_i\Tilde{E}_i+\Omega)|_{H_{145}} \right) >4. $$
	\noindent
	This yields that 
	\begin{align*}
		-a_1- 2a_{j}+2a_{145}  &=\left(  D- a_1E_1- a_j\Tilde{E}_j-a_{145}H_{145} \right)\cdot H_{145} \\ 
        &= (a_i\Tilde{E}_i+\Omega)\cdot H_{145} \geq \mult_P\left( a_i\Tilde{E}_i+\Omega|_{H_{145}} \right) >4.
	\end{align*}

    \noindent
    On the other hand, we have
	\begin{align*}
		4 = D\cdot \Tilde{E}_j = \left(a_1E_1+a_{145}H_{145} + a_4\Tilde{E}_4 + a_5\Tilde{E}_5 + \Omega\right)\cdot \Tilde{E}_j \ge 2a_{145}- 4a_{j}.
	\end{align*}
    \noindent
    
    Recall that     \begin{align*}
		2 = D\cdot E_1 = \left(a_1E_1+a_{145}H_{145} + a_4\Tilde{E}_4 + a_5\Tilde{E}_5 + \Omega\right)\cdot E_1 \ge -a_1+a_{145}.
	\end{align*}
    \noindent
It follows from the inequalities above that we obtain the desired contradiction that 
    \begin{align*}
        4=2+2\geq (a_{145}-2a_{j})+(-a_1+a_{145})=-a_1-2a_j+2a_{145}>4.
    \end{align*}
    
    \noindent
    \textbf{Case 2.4}: $P \in T_{11}$.\\
    
    \noindent
    We write
	\begin{align*}
		D = a_{11}T_{11} + \Omega,
	\end{align*}
	\noindent
	$ a_{11} \ge 0 $ and $ T_{11} \nsubseteq \support\left( \Omega\right) $. We have
	\begin{align*}
		8 = D\cdot\varphi^{*}\left( t_2\right) \ge 2a_{11}.
	\end{align*}
	\noindent
	So $ a_{11} \le 4 $. Since $ \left( X, \frac{1}{4}D\right)  $ is not log canonical at $ P $ and $ \frac{a_{11}}{4} \le 1$, $ \left( X, T_{11} +\frac{1}{4}\Omega\right)  $ is not log canonical at $ P $. By the inversion of adjunction formula $ \left( T_{11}, \left.\frac{1}{4}\Omega\right|_{T_{11}}  \right)  $ is not log canonical at $ P $. So 
	$$ T_{11}\cdot\Omega \ge \mult_P\left( \Omega|_{T_{11}} \right) >4. $$
	
	\noindent
	However $ 4 = T_{11}\cdot \left( D - a_{11}T_{11}\right) = T_{11}\cdot \Omega $ which is a contradiction.
    \end{proof}
    
\subsection{Global log canonical threshold of the secondary Burniat surfaces with $ K^2 = 3$.}

The aim of this section is to prove that $ \glct \left( X,K_{X}\right) = \frac{2}{3} $, where $X$ is a Burniat surface with $K_X^2=3$ (see Section \ref{Construction_of_Burniat_surface_with_K^2=3}). Since $\glct(X,K_{X})=2\glct(X, 2K_{X})$, it is sufficient to show that $\glct(X, 2K_{X}) = \frac{1}{3}$. We divide the proof of this result into the following two propositions.

\begin{Proposition}\label{upperboundofglct_K^2=3}  $ \glct \left( X,2K_{X}\right) \le \frac{1}{3} $
   	\end{Proposition}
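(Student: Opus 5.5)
The plan is to imitate the proof of Proposition \ref{upperboundofglct_surfaces_with_K2=4_nonnodaltype}. We will exhibit an explicit effective divisor $e \sim -2K_{Y_6} \sim 6l-2e_1-\cdots-2e_6$ on $Y_6$ with $E:=\varphi^*(e)\sim 4K_X$ and $\lct_P(X,E)=\frac16$ at a suitable point $P$. Then
\[
\glct(X,2K_X)=2\glct(X,4K_X)\le 2\cdot\tfrac16=\tfrac13 .
\]

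For a branch curve $c$ we write $C:=\varphi^*(c)_{\text{red}}$, so that $\varphi^*(c)=2C$. The idea is to take a branch curve with coefficient $3$ in $e$, so that its reduced preimage appears in $E$ with coefficient $6$. This curve should meet another branch curve, of coefficient $1$, at a point that lies on no other component of $e$.

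We choose $h_{356}$ (a component of $B_2$) and $e_3$ (a component of $B_3$), which meet since $P_3\in P_3P_5P_6$. Set
\[
e:=3h_{356}+e_3+2e_5+2e_6+h_{12}+h_{145}+h_{246}.
\]
Using $h_{356}\sim l-e_3-e_5-e_6$, $h_{12}\sim l-e_1-e_2$, $h_{145}\sim l-e_1-e_4-e_5$ and $h_{246}\sim l-e_2-e_4-e_6$, one checks the class coefficient by coefficient:
\begin{itemize}
\item $e_3$: $-3+1=-2$;
\item $e_5$: $-3+2-1=-2$;
\item $e_6$: $-3+2-1=-2$;
\item $e_1$, $e_2$, $e_4$: each $-2$;
\item $l$: $6$.
\end{itemize}
Hence $e\sim 6l-2\sum_{i=1}^6 e_i\sim -2K_{Y_6}$, and since $2K_X\sim\varphi^*(-K_{Y_6})$ we get $E\sim 4K_X$.

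The components $h_{12}$, $h_{145}$, $h_{246}$, $h_{356}$ and $e_3$ all lie in the branch locus, while $e_5$ and $e_6$ do not. Therefore
\[
E=6H_{356}+2E_3+2\tilde E_5+2\tilde E_6+2H_{12}+2H_{145}+2H_{246}.
\]

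Now take $P\in X$ with $\varphi(P)\in e_3\cap h_{356}$. The only points to verify are that no other component of $e$ passes through $\varphi(P)$, and that $E_3$ and $H_{356}$ are smooth and meet transversally at $P$.
\begin{itemize}
\item The curves $e_5$ and $e_6$ are disjoint from $e_3$.
\item The lines $P_1P_2$, $P_1P_4P_5$ and $P_2P_4P_6$ do not pass through $P_3$, so their strict transforms do not meet $e_3$.
\item The branch divisor is simple normal crossing. The curves $e_3\subset B_3$ and $h_{356}\subset B_2$ lie in different $B_i$, so over $\varphi(P)$ the cover is the standard $\mathbb{Z}_2^2$-cover ramified along both curves.
\end{itemize}
Thus near $P$ the reduced preimages $E_3$ and $H_{356}$ are smooth and transversal, and $E=6H_{356}+2E_3$ locally. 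Lemma \ref{comarelct} gives $\lct_P(X,E)=\frac16$, which proves the claimed bound.

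The main obstacle is finding such a divisor $e$ in the correct class using only the special curves of $Y_6$. The coefficient check above settles this, so the remaining local verification is routine.
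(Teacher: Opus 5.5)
Your proposal is correct and is essentially the paper's proof. The paper uses $e=3h_{145}+h_{246}+h_{356}+h_{23}+e_1+2e_4+2e_5$ at a point over $e_1\cap h_{145}$, and your divisor $3h_{356}+e_3+2e_5+2e_6+h_{12}+h_{145}+h_{246}$ at a point over $e_3\cap h_{356}$ is exactly its image under the relabeling $1\mapsto 3,\ 2\mapsto 1,\ 3\mapsto 2,\ 4\mapsto 5,\ 5\mapsto 6,\ 6\mapsto 4$. This relabeling preserves the three collinearities and permutes $B_1,B_2,B_3$, and your class check and local application of Lemma \ref{comarelct} match the paper's steps.
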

    
   \begin{proof}
   	Let $ P\in X $ be such that $ \varphi\left( P\right) \in  e_1\cap h_{145}$. Consider the following divisor:
    {\small
   	\begin{align*}
   		e:=3h_{145} + h_{246}+h_{356}+h_{23}+e_1+2e_4+2e_5\sim 6l-2e_1 - 2e_2 - 2e_3 - 2e_4 - 2e_5-2e_6.
   	\end{align*}
    }
   \noindent
   The divisor $E:= \varphi^{*}\left( e\right) =6H_{145} + 2H_{246} + 2H_{356} + 2H_{23}+ 2E_{1}+2\tilde{E}_4+2\tilde{E}_5  \sim 4K_{X}$. This implies that $ \lct\left( X,E\right) = \frac{1}{6} $ at $P$ by Lemma \ref{comarelct}. So we get $$ \glct\left( X,2K\right) = 2\glct\left( X,4K\right) \le 2.\frac{1}{6} = \frac{1}{3}. $$ 
   \end{proof}

   \begin{Proposition}\label{lowerboundofglct_K^2=3}
   	$ \glct \left( X,2K_{X}\right) \ge \frac{1}{3} $.
   \end{Proposition}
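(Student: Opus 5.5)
We argue by contradiction, exactly as in Propositions \ref{lowerboundofglct} and \ref{lowerboundofglct_nodal type_K^2=4}. Suppose $\glct(X,2K_X)<\frac{1}{3}$. Then there is an effective $\mathbb{Q}$-divisor $D\sim_{\mathbb{Q}}2K_X$ such that $(X,\frac{1}{3}D)$ is not log canonical at some $P\in X$, so $\mult_P(D)>3$ by \cite[Proposition 9.5.11]{MR2095471}. Put $d:=\varphi(D)\sim_{\mathbb{Q}}-K_{Y_6}$ and $B:=B_1+B_2+B_3$. By Lemma \ref{log_canonical_between_cover}, the pair $(Y_6,\frac{1}{3}d+\frac{1}{2}B)$ is not log canonical at $\varphi(P)$. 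We then split into cases according to the position of $\varphi(P)$ relative to the eleven components $e_1,e_2,e_3,h_{23},h_{246},h_{25},h_{13},h_{34},h_{356},h_{12},h_{145},h_{16}$ of $B$.

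\textbf{Case 1: $\varphi(P)\notin B$.} Here $(Y_6,\frac{1}{3}d)$ is not log canonical, so we need $\glct(Y_6,-K_{Y_6})\ge\frac{1}{3}$. The surface $Y_6$ is a weak del Pezzo surface of degree $3$. Its anticanonical model is a cubic surface with canonical singularities (here $A_1$ points coming from the three $(-2)$-curves $e_4,e_5,e_6$, or rather from the configuration $h_{145},h_{246},h_{356}$). I would use \cite[Theorem 1.7]{MR2465686} or Cheltsov's tables for singular cubics, together with \cite[Lemma 2.7]{MR2747875} to pass from the crepant model to $Y_6$. This should give $\glct\ge\frac{1}{3}$, with room to spare.

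\textbf{Case 2: $\varphi(P)$ is a smooth point of $B$ on one component $C$.} Write $d=aC+\omega$. Bound $a$ by intersecting $d$ with a nef curve such as $t_i$ or $l$, so that $\frac{a}{3}+\frac12\le 1$. Inversion of adjunction then gives $\omega\cdot C>3$. On the other hand $\omega\cdot C=-K_{Y_6}\cdot C-aC^2$, which equals $1+a$ for $C=e_i$ and $1$ or $1+a$ for the $h$'s. Combined with the bound on $a$, this yields a contradiction. Alternatively, one can work upstairs on $X$ as in the nodal case, using $D\cdot \varphi^*(t_i)=6$ to get $a\le 3$. Note also that $-K_{Y_6}\cdot h_{145}=0$, so curves through $e_4,e_5,e_6$ carry multiplicity constraints via $-K\cdot \tilde E$.

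\textbf{Case 3: $\varphi(P)$ is a node $C_1\cap C_2$ of $B$.} Write $d=a_1C_1+a_2C_2+\sum a_jC_j+\omega$, keeping the components meeting $C_1$ or $C_2$. First try a one-step inversion of adjunction along whichever curve has the smaller coefficient, exactly as in Case 2.2 of the proof of Theorem \ref{glct_Z_2^n_covering}, together with inequalities of the form $0\le C_j\cdot\omega$.

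Where this fails — I expect the points $e_1\cap h_{145}$ and the analogous $e_k\cap h_{kl}$, where the upper bound of Proposition \ref{upperboundofglct_K^2=3} is attained — I would copy Case 2.3 of Proposition \ref{lowerboundofglct} step by step. The steps are as follows.
\begin{enumerate}
\item Prove $a_{h}\le 1$ using \cite[Lemma 2.2]{MR4216579}, applied to the divisor $\varphi^*$ of the extremal configuration $3h_{145}+h_{246}+h_{356}+h_{23}+e_1+2e_4+2e_5$. This lets us assume some component is missing from $\support(D)$, and intersecting with that component bounds the coefficient.
\item Derive $a_e\le 1+a_{\cdot}$ from $h\cdot\omega\ge0$.
\item Get $a_e+m\le 2$ by adjunction along $h$.
\item Blow up $\varphi(P)$ and treat the three possible positions of $Q$ on the exceptional curve $F$ via \cite[Remark 2.6]{MR2465686}.
\end{enumerate}

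The main obstacle is Case 3 at the nodes lying on $h_{145},h_{246},h_{356}$. These curves have $-K\cdot h=0$, so the intersection numbers are tight and the coefficients $a_4,a_5,a_6$ of $e_4,e_5,e_6$ enter. I expect to need the extra inequalities coming from $D\cdot\tilde E_i$ and $D\cdot E_1$, as in Case 2.3 of the nodal proof, possibly combined with the blow-up argument.
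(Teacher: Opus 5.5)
Your overall plan matches the paper's proof. It argues by contradiction and pushes down via Lemma \ref{log_canonical_between_cover}. Case 1 uses a lower bound for $\glct(Y_6,-K_{Y_6})$, and inversion of adjunction handles smooth points and nodes of $B$. At the nodes on $h_{145},h_{246},h_{356}$ you use Cheltsov's blow-up, after applying \cite[Lemma 2.2]{MR4216579} to exactly the divisor $3h_{145}+h_{246}+h_{356}+h_{23}+e_1+2e_4+2e_5$ to get $a_{145}\le 1$.

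\textbf{The gap: the coefficient bounds.} Every inversion-of-adjunction step needs them. Before raising the boundary coefficient $\frac{a}{3}+\frac12$ of a branch curve $C\ni\varphi(P)$ to $1$, you must know $a\le\frac32$. The same bound $a_1\le\frac32$ is needed in the branch $Q\in F\cap e^1_1$ of the blow-up argument. Your proposed mechanism does not give this:
\begin{itemize}
\item Intersecting with $t_i$ gives only $a\le 2$. For example, $d\cdot t_1=2$ and $t_1\cdot e_1=1$; likewise $t_2$ against $h_{16}$ or $h_{145}$. Intersecting with $l$ gives only $a\le 3$.
\item Upstairs, $D\cdot\varphi^*(t_i)=4(-K_{Y_6}\cdot t_i)=8$, not $6$. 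Hence the coefficient of $E_1$ in $D$ is bounded only by $4$. That suffices for the threshold $\frac14$ of the nodal case, but not for $\frac13$.
\item You cannot import the bound from the global threshold of the base, as in Case 2.3 of the non-nodal proof you plan to copy. Since $2e_4+h_{145}+h_{246}+h_{34}\sim -K_{Y_6}$, we have $\glct(Y_6,-K_{Y_6})\le\frac12$, which only yields $a\le 2$.
\end{itemize}

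\textbf{The missing idea.} Intersect $d$ with other curves of the configuration, or equivalently pair it with better nef classes than $t_i$ and $l$:
\begin{itemize}
\item $a_1\le\frac43$ follows from $d\cdot h_{16}=d\cdot h_{23}=1$ and $d\cdot h_{145}=0$. This amounts to pairing with the nef class $h_{16}+h_{23}+\frac12 h_{145}$, which has degree $2$ against $-K_{Y_6}$ and $\frac32$ against $e_1$.
\item $a_{16}\le 1$ follows from $d\cdot h_{25}=d\cdot h_{34}=1$, i.e.\ the conic pencil $|2l-e_2-e_3-e_4-e_5|$.
\item $a_{145}\le\frac32$ follows by also adding $d\cdot e_1=1$, i.e.\ the class $e_1+h_{16}+h_{23}=2l-e_2-e_3-e_6$.
\end{itemize}

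\textbf{A correction to your Case 2 formula.} Your expression for $\omega\cdot C$ is wrong for the $(-2)$-curves. Since $-K_{Y_6}\cdot h_{145}=0$ and $h_{145}^2=-2$, we get $\omega\cdot h_{145}=2a_{145}$. So the contradiction at a smooth point of $B$ on $h_{145}$ needs exactly $a_{145}\le\frac32$. This bound is sharp for the argument and cannot be obtained from any of the tests you listed.
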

   \begin{proof}
Suppose on the contrary that $ \glct \left( X,2K_{X}\right) < \frac{1}{3} $. There exists an effective $ \mathbb{Q} $-Cartier divisor $ D \sim_{\mathbb{Q}} 2K_{X}$ such that $ \left( X, \frac{1}{3}D\right)  $ is not log canonical at some point $ P \in X $. By \cite[Proposition 9.5.11]{MR2095471}, we have that $\mult_P\left( D\right) >3$.

    \noindent
    We denote by $ d:= \varphi\left( D\right)  $. By Lemma \ref{log_canonical_between_cover}, $ \left( Y_{6}, \frac{1}{3}d +\frac{1}{2}B\right)  $ is not log canonical at $ \varphi\left( P\right)$ where $B:=B_1+B_2+B_3$.\\ 
	
    \noindent
    \textbf{Case 1}: Assume that $\varphi\left( P\right) \notin B  $. Since $ \left( Y_{6},  \frac{1}{3}d \right)  $ is not log canonical at $ \varphi\left( P\right)  $,  $ \glct\left( Y_{6}, d\right) < \frac{1}{3} $. We notice that $ d \sim_\mathbb{Q} -K_{Y_{6}} $ and that $ \glct\left( Y_4, -K_{Y_4}\right) = \frac{1}{2} $ by \cite[Theorem 1.7]{MR2465686}. Moreover, since $Y_{6}$ is the blow-up of $Y_4$ at two points where $Y_4$ has only canonical singularities (in fact, $Y_4$ is smooth), $\glct\left( Y_{6}, -K_{Y_{6}}\right) \ge \glct\left( Y_4, -K_{Y_4}\right) = \frac{1}{2}$ (c.f. \cite[Lemma 2.7]{MR2747875}). We get a contradiction.\\

    \noindent
    \textbf{Case 2}: Assume that $\varphi\left( P\right) \in B  $.\\

    \noindent
    \textbf{Case 2.1}:   $\varphi\left( P\right) \in (e_1\cup e_2\cup e_3) \setminus (h_{12}\cup h_{13}\cup h_{23}\cup h_{16}\cup h_{25}\cup h_{34}\cup h_{145}\cup h_{246} \cup h_{356} )$.\\
    
    \noindent Suppose that  $\varphi\left( P\right) \in e_{1} \setminus (h_{12} \cup h_{13} \cup h_{16}\cup h_{145})$. \\  

    \noindent
    Write
    $$d = a_1e_1 +a_{16}h_{16}+a_{145}h_{145}+a_{23}h_{23}+  \omega,$$
    \noindent
    where $a_1, a_{16}, a_{145}, a_{23} \ge 0$ and $e_1, h_{16}, h_{145}, h_{23} \nsubseteq \textup{supp}(\omega)$. Note that  
    \begin{align*}
        1&=d\cdot h_{16}\geq a_1-a_{16}+a_{23},\\
        1&=d\cdot h_{23}\geq a_{16}+a_{145}-a_{23},\\
        0&=d\cdot h_{145}\geq a_1+a_{23}-2a_{145}.
    \end{align*}
    This yields 
    \[
        2=d\cdot (h_{16}+h_{23})\geq a_1+a_{145}\geq \frac{3}{2}a_1+\frac{1}{2}a_{23}\geq \frac{3}{2}a_1. \]
        Thus one has $a_1\leq \frac{4}{3}<\frac{3}{2}$.
    \noindent
    Because $(Y_6,\frac{1}{3}d+\frac{1}{2}e_1)$ is not log canonical at $\varphi(P)$ and $ \frac{a_{1}}{3}+\frac{1}{2} < 1$, the pair $ \left( Y_6, e_{1} +\frac{1}{3}\omega\right)  $ is not log canonical at $ \varphi(P) $. By the inversion of adjunction formula, we get $ \left( e_{1}, \left.\frac{1}{3}\omega\right|_{e_{1}}  \right)  $ is not log canonical at $ \varphi(P) $. 
    This yields that 
    \begin{align*}
		1+ a_{1}-a_{16}-a_{145}  &=\left(  d - a_{1}e_{1}-a_{16}h_{16}-a_{23}h_{23}-a_{145}h_{145} \right)\cdot e_{1} \\ 
        &= \omega\cdot e_{1} \geq \mult_{\varphi(P)}\left( \omega|_{e_{1}} \right) >3.
    \end{align*}
    Thus, $a_{1}-a_{16}>2+a_{145}$. However, 
    this contradicts to the above inequality $1=d\cdot h_{16}\geq a_1-a_{16}+a_{23}$.
\\

 \noindent 
 The same proof works for $\varphi\left( P\right) \in e_{2} \setminus (h_{12} \cup h_{23} \cup h_{25}\cup h_{246})$ and $\varphi\left( P\right) \in e_{3} \setminus (h_{13} \cup h_{23} \cup h_{34}\cup h_{356})$.\\

    \noindent
    \textbf{Case 2.2}:   $\varphi\left( P\right) \in (h_{12}\cup h_{13}\cup h_{23}) \setminus (e_1\cup e_2\cup e_3\cup h_{16}\cup h_{25}\cup h_{34}\cup h_{145}\cup h_{246} \cup h_{356} )$.\\
    
    \noindent The argument is similar to Case 2.1. \\
 
    \noindent
    \textbf{Case 2.3}: $\varphi\left( P\right) \in h_{16} \setminus (e_1\cup h_{23}\cup h_{25}\cup h_{34})$, $\varphi\left( P\right) \in h_{25} \setminus (e_2\cup h_{13}\cup h_{16}\cup h_{34})$ or $\varphi\left( P\right) \in h_{34} \setminus (e_3\cup h_{12}\cup h_{16}\cup h_{25})$.\\
    
    \noindent The argument is similar to Case 2.1. We would hide the following argument later.
    
    \noindent Suppose that  $\varphi\left( P\right) \in h_{16} \setminus (e_1\cup h_{23}\cup h_{25}\cup h_{34})$. \\  

    \noindent
    Write
    $$d =  a_{16}h_{16}+a_{25}h_{25}+a_{34}h_{34}+  \omega,$$
    \noindent
    where $ a_{16}, a_{25}, a_{34} \ge 0$ and $ h_{16}, h_{25}, h_{34}  \nsubseteq \textup{supp}(\omega)$. Note that  
    \begin{align*}
        1&=d\cdot h_{25}\geq a_{16}-a_{25}+a_{34},\\
        1&=d\cdot h_{34}\geq a_{16}+a_{25}-a_{34}.
    \end{align*}
        Thus one has $a_{16}\leq 1<\frac{3}{2}$.
    \noindent
    Because $(Y_6,\frac{1}{3}d+\frac{1}{2}h_{16})$ is not log canonical at $\varphi(P)$ and $ \frac{a_{16}}{3}+\frac{1}{2} < 1$, the pair $ \left( Y_6, h_{16} +\frac{1}{3}\omega\right)  $ is not log canonical at $ \varphi(P) $. By the inversion of adjunction formula, we get $ \left( h_{16}, \left.\frac{1}{3}\omega\right|_{h_{16}}  \right)  $ is not log canonical at $ \varphi(P) $. 
    This yields that 
    \begin{align*}
		1+ a_{16}-a_{25}-a_{34} &=\left(  d - a_{16}h_{16}-a_{25}h_{25}-a_{34}h_{34}\right)\cdot h_{16} \\ 
        &= \omega\cdot h_{16} \geq \mult_{\varphi(P)}\left( \omega|_{h_{16}} \right) >3.
    \end{align*}
    In particular, $a_{16}>2$. However, 
    this contradicts to the above inequality $a_{16}\leq 1$.
\\

 \noindent 
 The same proof works for $\varphi\left( P\right) \in h_{25} \setminus (e_2\cup h_{13}\cup h_{16}\cup h_{34})$ and  $\varphi\left( P\right) \in h_{34} \setminus (e_3\cup h_{12}\cup h_{16}\cup h_{25})$.\\

    \noindent
    \textbf{Case 2.4}: $\varphi\left( P\right) \in (h_{145}\cup h_{246}\cup h_{356}) \setminus (e_1\cup e_2\cup e_3\cup h_{12}\cup h_{13}\cup h_{23} )$.\\
    
    
    \noindent Suppose that  $\varphi\left( P\right) \in h_{145} \setminus (e_1 \cup h_{23})$. \\  

    \noindent
    Write
    $$d = a_1e_1 +a_{16}h_{16}+a_{23}h_{23}+a_{145}h_{145}+  \omega,$$
    \noindent
    where $a_1, a_{16}, a_{23}, a_{145} \ge 0$ and $e_1, h_{16}, h_{23}, h_{145}  \nsubseteq \textup{supp}(\omega)$. Note that  
    \begin{align*}
        1&=d\cdot e_{1}\geq -a_{1}+a_{16}+a_{145},\\
        1&=d\cdot h_{23}\geq a_{16}-a_{23}+a_{145},\\
        1&=d\cdot h_{16}\geq a_1-a_{16}+a_{23},\\
        0&=d\cdot h_{145}\geq a_1+a_{23}-2a_{145}.
    \end{align*}
    This yields 
    \[
        2=d\cdot (h_{16}+h_{23})\geq a_1+a_{145}\geq a_{16}+2a_{145}-1\geq 2a_{145}-1. \]
        Thus one has $a_{145}\leq \frac{3}{2}$.
    Because $(Y_6,\frac{1}{3}d+\frac{1}{2}h_{145})$ is not log canonical at $\varphi(P)$ and $ \frac{a_{145}}{3}+\frac{1}{2} \leq 1$, the pair $ \left( Y_6, h_{145} +\frac{1}{3}\omega\right)  $ is not log canonical at $ \varphi(P) $. By the inversion of adjunction formula, we get $ \left( h_{145}, \left.\frac{1}{3}\omega\right|_{h_{145}}  \right)  $ is not log canonical at $ \varphi(P) $. 
    This yields that 
    \begin{align*}
		-a_{1}-a_{23}+2a_{145}  &=\left(  d - a_{1}e_{1}-a_{16}h_{16}-a_{23}h_{23}-a_{145}h_{145} \right)\cdot h_{145} \\ 
        &= \omega\cdot h_{145} \geq \mult_{\varphi(P)}\left( \omega|_{h_{145}} \right) >3.
    \end{align*}
    In particular, $a_{145}>\frac{3}{2}$ which contradicts to the above inequality $a_{145}\leq \frac{3}{2}$.
\\

 \noindent 
 The same proof works for $\varphi\left( P\right) \in h_{246} \setminus (e_2 \cup  h_{13})$ and $\varphi\left( P\right) \in h_{356} \setminus (e_3 \cup  h_{12})$.\\


\noindent
    \textbf{Case 2.5}: $\varphi\left( P\right) \in h_{16}\cap h_{25}$, $\varphi\left( P\right) \in h_{16}\cap h_{34}$ or $\varphi\left( P\right) \in h_{25}\cap h_{34}$.\\

  \noindent 
    Suppose that  $\varphi\left( P\right) \in h_{16} \cap h_{25} $. \\   

    \noindent
    Write
    $$d =  a_{16}h_{16}+a_{25}h_{25}+a_{34}h_{34}+  \omega,$$
    \noindent
    where $ a_{16}, a_{25}, a_{34} \ge 0$ and $ h_{16}, h_{25}, h_{34}  \nsubseteq \textup{supp}(\omega)$. Note that  
    \begin{align*}
        1&=d\cdot h_{25}\geq a_{16}-a_{25}+a_{34},\\
        1&=d\cdot h_{34}\geq a_{16}+a_{25}-a_{34}.
    \end{align*}
        Thus one has $a_{16}\leq 1$.
Because $(Y_6,\frac{1}{3}d+\frac{1}{2}h_{16}+\frac{1}{2}h_{25})$ is not log canonical at $\varphi(P)$ and $ \frac{a_{16}}{3}+\frac{1}{2} < 1$, the pair $ \left( Y_6, h_{16} +\frac{1}{3}(a_{25}h_{25} +\omega) +\frac{1}{2}h_{25}\right)  $ is not log canonical at $ \varphi(P) $. By the inversion of adjunction formula, we get $$ \left( h_{16}, \left.\left(\frac{1}{3}\left(a_{25}h_{25} +\omega\right) +\frac{1}{2}h_{25}\right)\right|_{h_{16}}  \right)  $$ is not log canonical at $ \varphi(P) $. 
    This yields that 
    \begin{align*}
		1+a_{16}-a_{34}  &=\left(  d - a_{16}h_{16}-a_{34}h_{34}\right)\cdot h_{16} = (a_{25}h_{25} +\omega)\cdot h_{16}  >\frac{3}{2}.
    \end{align*}
    Thus, $a_{16}>\frac{1}{2} + a_{34}$.
    Similar argument gives $a_{25}\leq 1$ and $a_{25}>\frac{1}{2}+a_{34}$.
    Together with the inequality $$1=d\cdot h_{34}\geq a_{16}+a_{25}-a_{34},$$ we obtain that 
    $$a_{16}>\frac{1}{2} + a_{34}\geq \frac{1}{2}+(a_{16}+a_{25}-1).$$

    \noindent
    So we have
    $a_{25}\leq \frac{1}{2}$ which contradicts to $a_{25}>\frac{1}{2}+a_{34}\geq \frac{1}{2}$.\\

\noindent The same proof works for $\varphi\left( P\right) \in h_{16}\cap h_{34}$ and $\varphi\left( P\right) \in h_{25}\cap h_{34}$.\\

  \noindent
    \textbf{Case 2.6}: $\varphi\left( P\right) \in e_{1} \cap (h_{12}\cup h_{13})$, $\varphi\left( P\right) \in e_{2} \cap (h_{12}\cup h_{23})$ or $\varphi\left( P\right) \in e_{3} \cap (h_{13}\cup h_{23})$.\\

    \noindent 
    Suppose that  $\varphi\left( P\right) \in e_{1} \cap h_{12} $. \\  

    \noindent
    Write
    $$d = a_1e_1 +a_{12}h_{12}+a_{16}h_{16}+a_{23}h_{23}+a_{34}h_{34}+a_{145}h_{145}+a_{356}h_{356}+  \omega,$$
    \noindent
    where $a_1, a_{12}, a_{16}, a_{23}, a_{34}, a_{145}, a_{356} \ge 0$ and $e_1, h_{12}, h_{16}, h_{23}, h_{34}, h_{145}, h_{356} \nsubseteq \textup{supp}(\omega)$. Note that  
    \begin{align*}
        1&=d\cdot h_{16}\geq a_1-a_{16}+a_{23} +a_{34},\\
        1&=d\cdot h_{23}\geq a_{16}+a_{145}-a_{23},\\
        0&=d\cdot h_{145}\geq a_1+a_{23}-2a_{145}.
    \end{align*}
    This yields 
    \[
        2=d\cdot (h_{16}+h_{23})\geq a_1+a_{145}+a_{34}\geq \frac{3}{2}a_1+\frac{1}{2}a_{23}+a_{34}\geq \frac{3}{2}a_1. \]
    Thus one has $a_1\leq \frac{4}{3}$.
    \noindent
    Because $(Y_6,\frac{1}{3}d+\frac{1}{2}e_1+\frac{1}{2}h_{12})$ is not log canonical at $\varphi(P)$ and $ \frac{a_{1}}{3}+\frac{1}{2} < 1$, the pair $ \left( Y_6, e_{1} +\frac{1}{3}(a_{12}h_{12} +\omega) +\frac{1}{2}h_{12}\right)  $ is not log canonical at $ \varphi(P) $. By the inversion of adjunction formula, we get $ \left( e_{1}, \left.\left(\frac{1}{3}\left(a_{12}h_{12} +\omega\right) +\frac{1}{2}h_{12}\right)\right|_{e_{1}}  \right)  $ is not log canonical at $ \varphi(P) $. 
    This yields that 
    \begin{align*}
		1+ a_{1}-a_{16}-a_{145}  &=\left(  d - a_1e_1-a_{16}h_{16}-a_{23}h_{23}-a_{34}h_{34}-a_{145}h_{145}-a_{356}h_{356} \right)\cdot e_{1} \\ 
        &= (a_{12}h_{12} +\omega)\cdot e_{1}  >\frac{3}{2}.
    \end{align*}
    Thus, $a_{1}>\frac{1}{2} + a_{16}+a_{145}$. Together with the inequality $0\geq a_1+a_{23}-2a_{145}$, we obtain that 
    $$a_{1}>\frac{1}{2} + a_{16}+\frac{a_1}{2} + \frac{a_{23}}{2}. $$

    \noindent
    So we have
    $$a_1 > 1.$$
    
    \noindent
    Note that  
    \begin{align*}
        1&=d\cdot h_{16}\geq a_1-a_{16}+a_{23}+a_{34},\\
        1&=d\cdot h_{34}\geq a_{12}+a_{16}-a_{34}.
    \end{align*}
    This yields 
    $$a_1 + a_{12} + a_{23} \le 2.$$
    \noindent
    So we get $a_{12} < 1$. Because $(Y_6,\frac{1}{3}d+\frac{1}{2}e_1+\frac{1}{2}h_{12})$ is not log canonical at $\varphi(P)$ and $ \frac{a_{12}}{3}+\frac{1}{2} < 1$, the pair $ \left( Y_6, h_{12} +\frac{1}{3}(a_{1}e_{1} +\omega) +\frac{1}{2}e_{1}\right)  $ is not log canonical at $ \varphi(P) $. By the inversion of adjunction formula, we get $ \left( h_{12}, \left.\left(\frac{1}{3}\left(a_{1}e_{1} +\omega\right) +\frac{1}{2}e_{1}\right)\right|_{h_{12}}  \right)  $ is not log canonical at $ \varphi(P) $. 
    This yields that 
    \begin{align*}
		1+ a_{12}-a_{34}-a_{356}  &=\left(  d - a_{12}h_{12}-a_{16}h_{16}-a_{23}h_{23}-a_{34}h_{34}-a_{145}h_{145}-a_{356}h_{356} \right)\cdot h_{12} \\ 
        &= (a_{1}e_{1} +\omega)\cdot h_{12}  >\frac{3}{2}.
    \end{align*}
    So, $a_{12}>\frac{1}{2} + a_{34}+a_{356}$. Together with the inequality $0 = d\cdot h_{356}\geq a_{12}-2a_{356}$, we obtain that 
    $$a_{12}>\frac{1}{2} + a_{34}+\frac{a_{12}}{2}. $$
    \noindent
    Thus one gets $a_{12} > 1$ which contradicts $a_{12} < 1$.\\

    \noindent The same proof works for the cases $\varphi\left( P\right) \in e_1\cap h_{13}$ and $\varphi\left( P\right) \in e_{2} \cap (h_{12}\cup h_{23})$ and $\varphi\left( P\right) \in e_{3} \cap (h_{13}\cup h_{23})$.\\
    
    \noindent
    \textbf{Case 2.7}: $\varphi\left( P\right) \in e_{1} \cap h_{16}$, $\varphi\left( P\right) \in e_{2} \cap h_{25}$ or $\varphi\left( P\right) \in e_{3} \cap h_{34}$.\\

\noindent 
    Suppose that $\varphi\left( P\right) \in e_{1} \cap h_{16} $. \\  

    \noindent
    Write
    $$d = a_1e_1 +a_{12}h_{12}+a_{16}h_{16}+a_{23}h_{23}+a_{34}h_{34}+a_{145}h_{145}+a_{356}h_{356}+  \omega,$$
    \noindent
    where $a_1, a_{12}, a_{16}, a_{23}, a_{34}, a_{145}, a_{356} \ge 0$ and $e_1, h_{12}, h_{16}, h_{23}, h_{34}, h_{145}, h_{356} \nsubseteq \textup{supp}(\omega)$. Note that  
    \begin{align*}
        1&=d\cdot h_{16}\geq a_1-a_{16}+a_{23} +a_{34},\\
        1&=d\cdot h_{23}\geq a_{16}+a_{145}-a_{23},\\
        0&=d\cdot h_{145}\geq a_1+a_{23}-2a_{145}.
    \end{align*}
    This yields 
    \[
        2=d\cdot (h_{16}+h_{23})\geq a_1+a_{145}+a_{34}\geq \frac{3}{2}a_1+\frac{1}{2}a_{23}+a_{34}\geq \frac{3}{2}a_1. \]
    Thus one has $a_1\leq \frac{4}{3}$.
    \noindent
    Because $(Y_6,\frac{1}{3}d+\frac{1}{2}e_1+\frac{1}{2}h_{16})$ is not log canonical at $\varphi(P)$ and $ \frac{a_{1}}{3}+\frac{1}{2} < 1$, the pair $ \left( Y_6, e_{1} +\frac{1}{3}(a_{16}h_{16} +\omega) +\frac{1}{2}h_{16}\right)  $ is not log canonical at $ \varphi(P) $. By the inversion of adjunction formula, we get $ \left( e_{1}, \left.\left(\frac{1}{3}\left(a_{16}h_{16} +\omega\right) +\frac{1}{2}h_{16}\right)\right|_{e_{1}}  \right)  $ is not log canonical at $ \varphi(P) $. 
    This yields that
    {\small
    \begin{align*}
		1+ a_{1}-a_{12}-a_{145}  &=\left(  d - a_1e_1-a_{12}h_{12}-a_{23}h_{23}-a_{34}h_{34}-a_{145}h_{145}-a_{356}h_{356} \right)\cdot e_{1} \\ 
        &= (a_{16}h_{16} +\omega)\cdot e_{1}  >\frac{3}{2}.
    \end{align*}
    }
    Thus, $a_{1}>\frac{1}{2} + a_{12}+a_{145}$. Together with the inequality $0\geq a_1+a_{23}-2a_{145}$, we obtain that 
    $$a_{1}>\frac{1}{2} + a_{12}+\frac{a_1}{2} + \frac{a_{23}}{2}. $$

    \noindent
    So we have
    $$a_1 > 1.$$

    \noindent
    Note that  
    \begin{align*}
        1&=d\cdot h_{12}\geq a_1-a_{12}+a_{34}+a_{356},\\
        1&=d\cdot h_{34}\geq a_{12}+a_{16}-a_{34}.
    \end{align*}
    This yields 
    $$a_1 + a_{16} + a_{356} \le 2.$$
    \noindent
    So we get $a_{16} < 1$. Because $(Y_6,\frac{1}{3}d+\frac{1}{2}e_1+\frac{1}{2}h_{16})$ is not log canonical at $\varphi(P)$ and $ \frac{a_{16}}{3}+\frac{1}{2} < 1$, the pair $ \left( Y_6, h_{16} +\frac{1}{3}(a_{1}e_{1} +\omega) +\frac{1}{2}e_{1}\right)  $ is not log canonical at $ \varphi(P) $. By the inversion of adjunction formula, we get $ \left( h_{16}, \left.\left(\frac{1}{3}\left(a_{1}e_{1} +\omega\right) +\frac{1}{2}e_{1}\right)\right|_{h_{16}}  \right)  $ is not log canonical at $ \varphi(P) $. 
    This yields that 
    {\small
    \begin{align*}
		1+ a_{16}-a_{23}-a_{34}  &=\left(  d - a_{12}h_{12}-a_{16}h_{16}-a_{23}h_{23}-a_{34}h_{34}-a_{145}h_{145}-a_{356}h_{356} \right)\cdot h_{16} \\ 
        &= (a_{1}e_{1} +\omega)\cdot h_{16}  >\frac{3}{2}.
    \end{align*}
    }
    So, $a_{16}>\frac{1}{2} + a_{23}+a_{34}$. Together with the inequality 
    $$1 = d\cdot h_{23}\geq a_{16}+a_{145} - a_{23} \geq a_{16} + \frac{a_1}{2} + \frac{a_{23}}{2} -a_{23},$$
    \noindent
    we obtain that 
    $$a_{16}>\frac{1}{2} + \left(a_{16} + \frac{a_1}{2} + \frac{a_{23}}{2} -1\right) + a_{34}. $$
    \noindent
    Thus one gets $a_{1} < 1$ which contradicts to $a_{1} > 1$.\\

    \noindent The same proof works for the cases $\varphi\left( P\right) \in e_{2} \cap h_{25}$ and  $\varphi\left( P\right) \in e_{3} \cap h_{34}$.\\

    \noindent
    \textbf{Case 2.8}: $\varphi\left( P\right) \in h_{12} \cap h_{34}$, $\varphi\left( P\right) \in h_{13} \cap h_{25}$ or $\varphi\left( P\right) \in h_{23} \cap h_{16}$.\\

    \noindent The arguments are similar to Case 2.7.\\

    \noindent
    \textbf{Case 2.9}: $\varphi\left( P\right) \in e_{1} \cap h_{145}$, $\varphi\left( P\right) \in e_{2} \cap h_{246}$ or $\varphi\left( P\right) \in e_{3} \cap h_{356}$.\\

\noindent Suppose that $\varphi\left( P\right) \in e_{1} \cap h_{145} $.  \\

 \noindent
    Write
    \begin{align*}
        d &= a_1e_1+a_4e_4+a_5e_5+a_{12}h_{12}+a_{13}h_{13}+a_{16}h_{16}+a_{23}h_{23}\\
        &\ \ \ \ +a_{145}h_{145}+  \omega
    \end{align*}
     \noindent
    where $a_1, a_4, a_5, a_{12},a_{13}, a_{16}, a_{23}, a_{145} \ge 0$ and $e_1, e_4, e_5, h_{12}, h_{13}, h_{16}$, $ h_{23}, h_{145}\nsubseteq \textup{supp}(\omega)$.\\ 
    
    \noindent
    \textbf{Claim}: $a_{145} \le 1.$\\

    \noindent
    Remark that $ \left( X, \frac{1}{3}\left( 3H_{145} + H_{246}+H_{356}+H_{23}+E_1+\tilde{E}_4+\tilde{E}_5 \right) \right)  $ is log canonical at $P$.
    By \cite[Lemma 2.2]{MR4216579}, we can assume that $ \support\left( D\right)  $ does not contain at least one component of $ 3H_{145} + H_{246}+H_{356}+H_{23}+E_1+\tilde{E}_4+\tilde{E}_5$ and $(X,\frac{1}{3}D)$ is not log canonical at $P$. Note that $\support\left( D\right)$  contains both $E_1$ and $H_{145}$, since otherwise it leads to a contradiction   \begin{align*}2&=E_1\cdot D\geq \mult_P(E_1)\cdot \mult_P(D)=\mult_P(D)>3 \\
    (\textup{or } 0&=H_{145}\cdot D\geq \mult_P(H_{145})\cdot \mult_P(D)=\mult_P(D)>3).
    \end{align*} 
    Therefore, 
    $$\tilde{E}_4 \not\subseteq \support\left( D\right), \tilde{E}_5 \not\subseteq \support\left( D\right), H_{23} \not\subseteq \support\left( D\right), H_{246} \not\subseteq \support\left( D\right), \textrm{or}\  H_{356} \not\subseteq \support\left( D\right).$$
    In particular, $$ e_4\not\subseteq \support\left( d\right),\ e_5\not\subseteq \support\left( d\right),\ h_{23} \not\subseteq \support\left( d\right),\  h_{246} \not\subseteq \support\left( d\right),\  \textrm{or}\   h_{356} \not\subseteq \support\left( d\right).$$
    
    \noindent
    If $e_{i} \not\subseteq \support\left( d\right)$ for $i=4, 5$, then 
    $1 =e_{i}\cdot d \geq e_{i}\cdot a_{145}h_{145}=a_{145}.$\\

    \noindent
    If $h_{23}\not\subseteq \support\left( d\right)$, then $1=h_{23}\cdot d\geq h_{23}\cdot a_{145}h_{145}= a_{145}$.\\

    \noindent
    If $h_{246}\not\subseteq \support\left( d\right)$, then $0=h_{246}\cdot d\geq h_{246}\cdot a_4e_4=a_4$. In particular, $1=e_4\cdot d\geq a_{145}$.\\

\noindent
    Similarly, if $h_{356}\not\subseteq \support\left( d\right)$, then $0=h_{356}\cdot d\geq h_{356}\cdot a_5e_5=a_5$. One has $1=e_5\cdot d\geq a_{145}$.\\ 

    \noindent
    Therefore, $a_{145} \le 1.$\\
    
    \noindent
    Denote by $m = \mult_{\varphi(P)}(\omega)$.\\
    
    \noindent
    \textbf{Claim}: $a_1\leq \frac{4}{3},\ a_1+m\leq 2$ and $a_1\leq 1+a_{1k}$ for $k=2,3,6$.\\
    
    \noindent Note that  
    \begin{align*}
        1&=d\cdot h_{16}\geq a_1-a_{16}+a_{23},\\
        1&=d\cdot h_{23}\geq a_{16}+a_{145}-a_{23},\\
        0&=d\cdot h_{145}\geq a_1+a_{23}-2a_{145}.
    \end{align*}
    This yields 
    \[
        2=d\cdot (h_{16}+h_{23})\geq a_1+a_{145}\geq \frac{3}{2}a_1+\frac{1}{2}a_{23}\geq \frac{3}{2}a_1.\]
        Thus one has $a_1\leq \frac{4}{3}$.
        For $k=2,3,6$, one has $1=d\cdot h_{1k}\geq (a_1e_1+a_{1k}h_{1k})\cdot h_{1k}=a_1-a_{1k}$.\\

    \noindent Since $1\geq a_{145}$ and 
    \begin{align*}
		-a_4-a_5-a_{23}+2a_{145}   
        = (a_{1}e_{1}+\omega)\cdot h_{145} &\geq \mult_{\varphi(P)}\left( \left.(a_{1}e_{1}+\omega)\right|_{h_{145}} \right)\\
        &= a_1+m,
    \end{align*}
    we have $a_1+m\leq 2$.\\

        \noindent
    Let $\pi: Y \longrightarrow Y_6$ be the blowing-up of $Y_6$ at $\varphi(P)$. We denote by $F$ the exceptional divisor, by $e_1^1$, $h_{145}^1$, $\omega_1$, $d_1$ the proper transform of $e_1$, $h_{145}$, $\omega$, $d$, respectively, and by $e_4^1, \ e_5^1,\ h_{12}^1,\ h^1_{13}$, $h^1_{16}$, $h^1_{23}$ the pull-back of $e_4, \ e_5,\ h_{12},\ h_{13}$, $h_{16}$, $h_{23}$, respectively. So we have that
    \begin{align*}
        d_1 &= a_1e_1^1+a_4e_4^1+a_5e_5^1+a_{12}h_{12}^1+a_{13}h_{13}^1+a_{16}h_{16}^1+a_{23}h_{23}^1\\
        &\ \ \ \ +a_{145}h_{145}^1+ \omega_1
    \end{align*}

    \noindent
    and
    $$d_1 = \pi^{*}(d) - (a_1 + a_{145} + m)F,\ e^1_1 = \pi^{*}(e_1) - F,\ h^1_{145} = \pi^{*}(h_{145}) - F,\ \omega_1=\pi^{*}(\omega)-mF.$$
    \noindent
    By \cite[Remark 2.6]{MR2465686}, $\left( Y, \frac{1}{3}d_1 +\frac{1}{2}e_1^1+\frac{1}{2}h_{145}^1+\frac{a_1 + a_{145} + m}{3}F\right)  $ is not log canonical at some point $Q \in F$.\\

    \noindent
    Suppose that $Q \in F\setminus (e_1^1 \cup h_{145}^1)$.\\

    \noindent
    So $\left( Y, \frac{1}{3}\omega_1 +\frac{a_1 + a_{145} + m}{3}F\right)  $ is not log canonical at $ Q $. Since $$\frac{a_1 + a_{145} + m}{3} \le 1,$$
    \noindent
    the pair $\left( Y, \frac{1}{3}\omega_1 +F\right)  $ is not log canonical at $ Q $. By the inversion of adjunction formula, we get $ \left( F, \left.\frac{1}{3}\omega_1\right|_{F}  \right)  $ is not log canonical at $ Q $. 
    This implies that
    \begin{align*}
        3 < F\cdot  \omega_1=F\cdot (d_1-a_1e_1^1-a_{145}h_{145}^1) 
        =(a_1 + a_{145} + m) - a_1 -a_{145} = m.
    \end{align*}

    \noindent
    Thus we get $m >3$ which contradicts to the above inequalities $a_1 + m \leq 2$ and $a_1\geq 0$.\\

    \noindent
    Suppose that $Q \in F \cap h_{145}^1$.\\

    \noindent
    So $\left( Y, \left(\frac{a_{145}}{3}+\frac{1}{2}\right)h^1_{145}+\frac{1}{3}\omega_1 +\frac{a_1 + a_{145} + m}{3}F\right)  $ is not log canonical at $ Q $. Since $\frac{a_{145} }{3}+\frac{1}{2} < 1,$ the pair $\left( Y, h^1_{145}+\frac{1}{3}\omega_1 +\frac{a_1 + a_{145} + m}{3}F\right)  $ is not log canonical at $ Q $. By the inversion of adjunction formula, we get $ \left( h^1_{145}, \left.\left(\frac{1}{3}\omega_1+\frac{a_1 + a_{145} + m}{3}F\right)\right|_{h^1_{145}}  \right)  $ is not log canonical at $ Q $. 
    This implies that
    \begin{align*}
        3 &<h^1_{145} \cdot ( \omega_1 + (a_1 + a_{145} + m)F)=h^1_{145} \cdot  \omega_1 + (a_1 + a_{145} + m)\\
        &=h^1_{145} \cdot(d_1-a_1e_1^1-a_4e_4^1-a_5e_5^1-a_{23}h_{23}^1-a_{145}h_{145}^1) + (a_1 + a_{145} + m)\\
        &=(0 - (a_1 + a_{145} + m))  -a_{4}-a_5-a_{23} +3a_{145}+ (a_1 + a_{145} + m).
    \end{align*}

    \noindent
    Thus we get $3+a_4+a_5+a_{23}< 3a_{145}$ which contradicts to the inequality $a_{145} \leq 1$.\\

    \noindent
    Suppose that $Q \in F \cap e^1_{1}$.\\

    \noindent
    So $\left( Y, \left(\frac{1}{2}+\frac{a_{1}}{3}\right)e^1_{1}+\frac{1}{3}\omega_1 +\frac{a_1 + a_{145} + m}{3}F\right)  $ is not log canonical at $ Q $. Since $\frac{1}{2}+\frac{a_{1} }{3} < 1,$ the pair $\left( Y, e^1_{1}+\frac{1}{3}\omega_1 +\frac{a_1 + a_{145} + m}{3}F\right)  $ is not log canonical at $ Q $. By the inversion of adjunction formula, we get $ \left( e^1_{1}, \left.\left(\frac{1}{3}\omega_1+\frac{a_1 + a_{145} + m}{3}F\right)\right|_{e^1_{1}}  \right)  $ is not log canonical at $ Q $. 
    This implies that
    \begin{align*}
        3 &<e^1_{1} \cdot ( \omega_1 + (a_1 + a_{145} + m)F)\\ 
        &= e^1_{1}\cdot (d_1 - a_1e^1_1 - a_{12}h^1_{12} - a_{13}h^1_{13} - a_{16}h^1_{16}-a_{145}h^1_{145} ) + (a_1 + a_{145} + m)\\
        &=(1 - (a_1 + a_{145} + m)) + 2a_{1} - ( a_{12} + a_{13} + a_{16}+0 ) + (a_1 + a_{145} + m).
    \end{align*}

    \noindent
    Thus we get 
    $$2 +  a_{12} + a_{13} + a_{16}  < 2a_{1}.$$ 
    
    \noindent
    Together with the inequalities $a_{1} \leq 1 + a_{1k}$ for $k=2,3,6$, we achieve that
    $$2 + 3a_1 - 3 \leq 2 +  a_{12} + a_{13} + a_{16}  < 2a_{1}.$$

    \noindent
    So we get $a_1 < 1$ which contradicts to the inequalities $2\leq 2 +  a_{12} + a_{13} + a_{16} < 2a_{1}$.\\

   \noindent The same proof works for $\varphi\left( P\right) \in e_{2} \cap h_{246}$ and  $\varphi\left( P\right) \in e_{3} \cap h_{356}$.\\
   
    \noindent
    \textbf{Case 2.10}: $\varphi\left( P\right) \in h_{12} \cap h_{356}$ or $\varphi\left( P\right) \in h_{13} \cap h_{246}$ or $\varphi\left( P\right) \in h_{23} \cap h_{145}$.\\

    \noindent The arguments are similar to Case 2.9.   
\end{proof}

 \begin{Acknowledgments}
This research is funded by Vietnam Ministry of Education and Training (MOET) under grant number B2025-CTT-04. The second author was partially supported by National Science and Technology Council of Taiwan (Grant Numbers: 112-2115-M-008 -006 -MY2 and 113-2123-M-002-019-). The third author was supported by Basic Science Research Program through the National Research Foundation of Korea(NRF) funded by the Ministry of Education(No. RS-2023-00241086). The first and the second authors would like to thank Vietnam Institute for Advanced Studies in Mathematics (VIASM) for their hospitality.

\end{Acknowledgments}

\bibliographystyle{acm}
\bibliography{JhengJie-Bin-YongJoo}

\begin{paracol}{2}
\BinAddresses
  \switchcolumn
\JhengJieAddresses
  \switchcolumn
\YongJooAddresses
  \switchcolumn
\end{paracol}

\end{document}